\newtheorem{theorem}{Theorem}[section]
\newtheorem{proposition}{Proposition}[section]
\newtheorem{lemma}{Lemma}[section]
\newtheorem{corollary}{Corollary}[section]
\numberwithin{equation}{section}
\newcommand{\N}{\mathbb{N}}
\newcommand{\R}{\mathbb{R}}
\newcommand{\x}{$x$ in $(0,1] $}
\newcommand{\psiassert}{\psi_{\omega_1}(\pi(\sigma\omega))}
\newcommand{\fassert}{f_{\omega_1}(\pi(\sigma\omega))}
\newcommand{\diff}{\, \mathrm{d}}
\journal{Elsevier}
\begin{document}

\begin{frontmatter}

\title{ Multifractal analysis of the power-2-decaying Gauss-like expansion}
\author[label1]{XUE-JIAO WANG\corref{cor1}}
\ead{xuejiao.wang@whu.edu.cn}

\address[label1]{School of Mathematics and Statistics, Wuhan University, Wuhan 430072, China}

\cortext[cor1]{corresponding author}

\begin{abstract}
 Each real number $x\in[0,1]$ admits a unique power-2-decaying Gauss-like expansion (P2GLE for short) as $x=\sum_{i\in\mathbb{N}} 2^{-(d_1(x)+d_2(x)+\cdots+d_i(x))}$, where $d_i(x)\in\mathbb{N}$. For any $x\in(0,1]$, the Khintchine exponent $\gamma(x)$ is defined by $\gamma(x):=\lim_{n\to\infty}\frac{1}{n}\sum_{j=1}^nd_j(x)$ if the limit exists. We investigate the sizes of the level sets $E(\xi):=\{x\in(0,1]:\gamma(x)=\xi\}$ for $\xi\geq 1$. Utilizing the Ruelle operator theory, we obtain the Khintchine spectrum $\xi\mapsto\dim_H E(\xi)$, where $\dim_H$ denotes the Hausdorff dimension. We establish the remarkable fact that the Khintchine spectrum has exactly one inflection point, which was never proved for the corresponding spectrum in continued fractions. As a direct consequence, we also obtain the Lyapunov spectrum. Furthermore, we find the Hausdorff dimensions of the level sets $\{x\in(0,1]:\lim_{n\to\infty}\frac{1}{n}\sum_{j=1}^{n}\log(d_j(x))=\xi\}$ and $\{x\in(0,1]:\lim_{n\to\infty}\frac{1}{n}\sum_{j=1}^{n}2^{d_j(x)}=\xi\}$.
\end{abstract}

\end{frontmatter}

\section{Introduction and main results}
Let $\N=\{1,2,\dots\}$ denote the set of positive integers. The power-2-decaying Gauss-like expansion (P2GLE for short) of a real number can be generated by the transformation $T: (0,1] \to (0,1]$ defined by
\begin{equation}\label{expansion}
  Tx:=2^nx-1 \quad\text{for } x\in(1/2^n,1/2^{n-1}] ,\quad n\in\N.
\end{equation}
By \cite[Proposition 2.1]{neunhauserer2011hausdorff}, we know that every real number $x$ in $(0,1]$ can be written uniquely as the form
\begin{equation}\label{ex}
  x=\sum_{i=1}^{\infty}2^{-(d_1(x)+d_2(x)+\cdots+d_i(x))}
\end{equation}
where $d_1(x)=\lceil-\log_2 x\rceil$ and $d_i(x)=d_1(T^{i-1}x)$ for $i\geq 2$. Here, we use $\lceil x\rceil$ to denote the smallest integer greater than or equal to $x.$
For any $d_1,\cdots,d_n\in\N$, the $n$th-order cylinder is defined by
\[I_n(d_1,\cdots,d_n):=\{x\in(0,1]:d_1(x)=d_1,\cdots,d_n(x)=d_n\}.\]


\medskip
Let $\varphi:[0,1]\to \R$ be a real-valued function (usually called potential). For any $n\geq 1$, denote the $n$th Birkhoff sum of $\varphi$ by $$S_n\varphi(x):=\sum_{j=0}^{n-1}\varphi(T^jx),\quad{x\in[0,1]}.$$ The Birkhoff average of $\varphi$ is defined as 
\[\bar {\varphi}(x):=\lim_{n\to\infty}\frac{1}{n}S_n\varphi(x),\quad x\in(0,1]\](if the limit exists). In multifractal analysis, we are interested in the Hausdorff dimension of the level sets
\begin{equation}\label{star}
E_{\varphi}(\xi)=\{x\in(0,1]:\bar {\varphi}(x)=\xi\}, \quad \xi\in\R. 
\end{equation}
The so-called Birkhoff spectrum is the function $t_{\varphi}:\R\to[0,1]\cup \{-\infty\}$ defined by
\[t_{\varphi}(\xi):=\dim_H E_{\varphi}(\xi)\]
where $\dim_H E_{\varphi}(\xi)$ denotes the Hausdorff dimension of $E_{\varphi}(\xi).$ By convention, $\dim_H\emptyset=0.$
\medskip

Some metric theorems of P2GLE have been obtained in \cite{neunhauserer2011hausdorff,Jorg,araising}. In particular, in \cite{Jorg} we know that the transformation $T$ is measure-preserving and ergodic with respect to the Lebesgue measure.  In \cite{neunhauserer2011hausdorff}, the author showed that the Hausdorff dimension of the set of numbers with prescribed digits in $A\subset\N$ in their P2GLE is the unique real solution of $\sum_{n\in A}2^{-ns}=1$, and hence the set $$\{x\in(0,1]:d_i(x)\text{ is bounded}\}$$ has Hausdorff dimension one. It is known to \cite{araising} that  $$\dim_H\{x\in(0,1]:\lim_{i\to\infty}d_i(x)=\infty\}=0.$$ 

\medskip
In this paper, we aim at finding the Birkhoff spectrum $\xi\mapsto\dim_H E_{\varphi}(\xi)$ for different choices of potential $\varphi$ for P2GLE. For any $x\in(0,1]$ with P2GLE \eqref{ex}, we define its Khintchine exponent $\gamma(x)$ and Lyapunov exponent $\lambda(x)$ respectively by 
     \begin{align*}
      \gamma(x):&=\lim_{n\to\infty}\frac{1}{n}\sum_{j=1}^nd_j(x)=\lim_{n\to\infty}\frac{1}{n}\sum_{j=0}^{n-1}d_1(T^jx),\\
      \lambda(x):&=\lim_{n\to\infty}\frac{1}{n}\log\abs{(T^n)^{\prime}(x)}=\lim_{n\to\infty}\frac{1}{n}\sum_{j=0}^{n-1}\log\abs{T'(T^jx)},
     \end{align*}
     if the limits exist.
     By definition, the Khintchine exponent is the Birkhoff average of  the potential $x\mapsto d_1(x),$ and the Lyapunov exponent is the Birkhoff average of the potential $x\mapsto \log\abs{T'(x)}$.
  
    \medskip
     Since the transformation $T$ is ergodic with respect to the Lebesgue measure, it follows that for Lebesgue almost all \x,
     \begin{align*}
      \gamma(x)&=\int\lceil-\log_2 x\rceil \,\mathrm{d}x=\sum_{n=1}^{\infty}n2^{-n}=2,\\
      \lambda(x)&=\int\log\abs{T'(x)}\,\mathrm{d}x=(\log2)\sum_{n=1}^{\infty}n2^{-n}=2\log2.
     \end{align*}
     \medskip

Note that $\gamma(x)\geq 1$ and $\lambda(x)\geq\log2$ for all \x. We study the level sets:    
\[E(\xi):=\{x\in(0,1]:\gamma(x)=\xi\},\quad \xi\geq 1,\]
 and
\begin{equation*}
  F(\beta):=\{x\in(0,1]:\lambda(x)=\beta\},\quad \beta\geq\log2.
\end{equation*}
\medskip
 The Khintchine spectrum and Lyapunov spectrum are defined by 
 \begin{align*}
  t(\xi):&=\dim_H E(\xi),\quad\xi\geq 1,\\
   \tilde{t}(\beta):&=\dim_H F(\beta),\quad\beta\geq\log2.
 \end{align*}
 
 \medskip
     Fan, Liao, Wang and Wu (\cite{A.H.Fan}) obtained complete graphs of the Khintchine spectrum and Lyapunov spectrum of the Gauss map associated to continued fractions. Applying the method of \cite{A.H.Fan}, we study the Birkhoff spectra of P2GLE. The major tool is the Ruelle operator theory.  For a given potential $\varphi$, we define the pressure function by
     \begin{equation}\label{equa.0}
     P(t,q)=P_{\varphi}(t,q):=\lim_{n\to\infty}\frac{1}{n}\log\sum_{\omega_1\in\N}\sum_{\omega_2\in\N}\cdots\sum_{\omega_n\in\N}\exp(\sup_{\substack{d_i(x)=\omega_i\\ 1\leq i\leq n}}S_n(-t\log\abs{T'(x)}+q\varphi(x))).
     \end{equation}
     We will prove (Theorem \ref{them.dim}) that $\dim_H E_{\varphi}(\xi)$ is exactly the first coordinate $t(\xi)$ of the unique solution $\big(t(\xi),q(\xi)\big)$ of the system
   \begin{equation}\label{equa.pressure}
    \begin{cases}
     P(t,q)=q\xi,\\
     \dfrac{\partial P}{\partial q}(t,q)=\xi.
       \end{cases}
     \end{equation}
      The notation $P_{\varphi}(t,q)$ and $E_{\varphi}(\xi)$ will be consistently used through this paper, and their specific forms are determined by the potential $\varphi$.

      \medskip
We obtain the following results of Khintchine spectrum for P2GLE.

\begin{theorem}\label{thm.1}
   We have $\dim_H E(1)=0$ and for any $\xi>1$,
   \begin{equation}\label{spec.1}
    t(\xi)=\dim_H E(\xi)=\dfrac{\frac{\log(\xi-1)}{\xi}+\log\xi-\log(\xi-1)}{\log2}.
   \end{equation}
    Furthermore, the Khintchine spectrum $t:\xi\mapsto\dim_H E(\xi)$  has the following properties:
  \begin{item}
    (1) $t(2)=1$;
  \end{item}
  \begin{item}
  (2) $\lim_{\xi\to 1} t(\xi)=0$ and $\lim_{\xi\to \infty} t(\xi)=0$;
  \end{item}
  \begin{item}
    (3) $t^{\prime}(\xi)>0$ for $\xi<2$, $t^{\prime}(2)=0$, and $t^{\prime}(\xi)<0$ for $\xi>2$;
  \end{item}

  \begin{item}
    (4) $\lim_{\xi\to 1} t'(\xi)=\infty$ and $\lim_{\xi\to \infty} t'(\xi)=0$;
  \end{item}
  \begin{item}
(5) the function $t(\xi)$ has exactly one inflection point.
  \end{item}
\end{theorem}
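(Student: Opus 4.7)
The engine is Theorem \ref{them.dim}: once the pressure function $P(t,q)$ attached to the potential $\varphi(x)=d_1(x)$ is computed, the spectrum $t(\xi)$ is obtained by solving the system \eqref{equa.pressure}. The decisive simplification for P2GLE, absent in the continued-fraction setting, is that $|T'(x)|=2^{d_1(x)}$ is piecewise constant. On every cylinder $I_n(\omega_1,\dots,\omega_n)$ both $\log|T'|$ and $\varphi=d_1$ are constant under the Birkhoff sum, so $S_n(-t\log|T'|+q\varphi)$ collapses to $(-t\log 2+q)(\omega_1+\cdots+\omega_n)$. Hence the multiple sum in \eqref{equa.0} factors as $\bigl(\sum_{\omega\in\N}e^{(-t\log 2+q)\omega}\bigr)^n$, a geometric series convergent precisely when $q<t\log 2$, and I obtain $P(t,q)=\pressure$ on that half-plane.

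Substituting into \eqref{equa.pressure} and writing $u=e^{-t\log 2+q}$, the second equation $\partial_q P=\xi$ becomes $(1-u)^{-1}=\xi$, which fixes $u=(\xi-1)/\xi$ and reduces the first equation to $q=\log(\xi-1)/\xi$. Solving for $t$ then yields the formula \eqref{spec.1}. The boundary case $\xi=1$ is treated separately: the identity $\gamma(x)=1$ combined with $d_j\geq 1$ forces the upper density of $\{j:d_j\geq 2\}$ to vanish, and a direct cover of $E(1)$ by $n$th-order cylinders (whose diameters are controlled by $d_1+\cdots+d_n\approx n$) together with an entropy count of compositions gives $\dim_H E(1)=0$.

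Properties (1)--(4) reduce to elementary calculus on \eqref{spec.1}. A short simplification yields
\[t'(\xi)=-\frac{\log(\xi-1)}{\xi^2\log 2},\]
from which (1) $t(2)=1$, (3) the sign of $t'(\xi)$, and (4) both limits of $t'$ are immediate. For (2), rewriting $F(\xi):=\log(\xi-1)/\xi+\log\xi-\log(\xi-1)$ either as $\log\xi-\frac{\xi-1}{\xi}\log(\xi-1)$ or as $\log\frac{\xi}{\xi-1}+\frac{\log(\xi-1)}{\xi}$ makes the limits $F\to 0$ as $\xi\to 1^+$ and as $\xi\to\infty$ transparent.

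The heart of the theorem is (5), the uniqueness of the inflection point, which remained open for the Gauss-map spectrum. Differentiating $t'$ once more reduces $t''(\xi)=0$ to the equation $h(\xi):=2\log(\xi-1)-\xi/(\xi-1)=0$. The plan is to show that $h$ has exactly one zero on $(1,\infty)$: the closed form gives $h'(\xi)=2/(\xi-1)+1/(\xi-1)^2>0$, so $h$ is strictly increasing, and the limits $h(1^+)=-\infty$ and $h(+\infty)=+\infty$ then force a unique root by the intermediate value theorem. I expect this to be the conceptually delicate step: it is technically short thanks to the explicit pressure, but it is exactly the quantitative control on $P(t,q)$ that the Gauss map does not afford, which is why the corresponding statement was never established in the continued-fraction setting.
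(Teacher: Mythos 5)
Your proposal is correct and follows the paper's overall route: compute the explicit pressure $P(t,q)=\log\frac{e^{-t\log 2+q}}{1-e^{-t\log 2+q}}$ on $\{q<t\log 2\}$, solve the system \eqref{equa.pressure} to get $q(\xi)=\log(\xi-1)/\xi$ and the formula \eqref{spec.1}, and then do calculus. Two sub-steps differ from the paper, both legitimately. First, for $\dim_H E(1)=0$ the paper covers $E(1)$ by cylinders and estimates the $s$-sum against the Gibbs measure $\mu_{t,q(\xi)}$ for $\xi$ near $1$; your composition-counting argument (at most $e^{nH(\varepsilon)}$ cylinders with $\sum_{j\le n}d_j\le(1+\varepsilon)n$, each of diameter at most $2^{-n}$, with $H(\varepsilon)\to 0$) is more elementary and self-contained, though you only sketch it. Second, for the inflection point the paper reduces $t''=0$ to $f(\xi)=2(\xi-1)\log(\xi-1)-\xi=0$, where $f$ is \emph{not} monotone, so it must separately argue that $f$ stays negative on its decreasing branch; your normalization $h(\xi)=f(\xi)/(\xi-1)=2\log(\xi-1)-\xi/(\xi-1)$ is strictly increasing with $h(1^+)=-\infty$ and $h(+\infty)=+\infty$, which makes uniqueness of the sign change immediate --- a genuine simplification. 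The one caveat: the identification $\dim_H E(\xi)=t(\xi)$ itself (the paper's Theorem \ref{them.dim}, proved via the Gibbs property of $\mu_{t(\xi),q(\xi)}$, the local dimension computation on cylinders, and Billingsley's lemma) is where the real dynamical work lies, and you cite it as a black box rather than proving it; since the paper establishes that theorem as part of proving Theorem \ref{thm.1}, a complete write-up would need to include that step.
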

\medskip
We remark that the last point of Theorem \ref{thm.1} was not proved for continued fraction expansion in \cite{A.H.Fan}. The reason is that in our P2GLE case, we have an explicit formula for the pressure function $P(t,q)$, while in the continued fraction expansion case such formula does not exist.

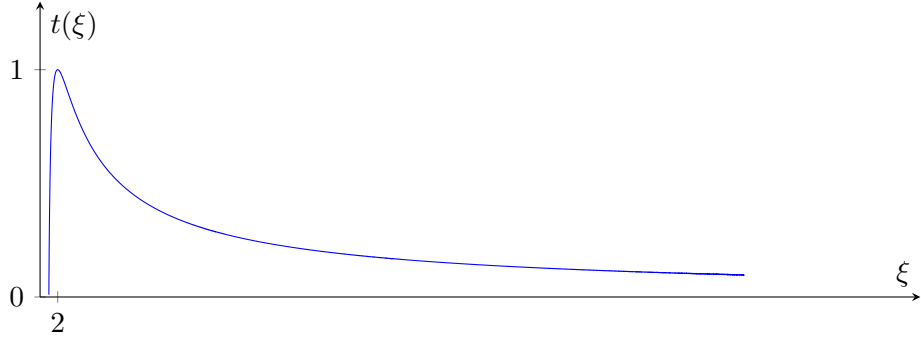
\begin{figure}[t]
  \centering
\begin{tikzpicture}
  \begin{axis}[
      axis lines=middle,
		  inner axis line style={=>},
      xlabel=$\xi$,
      ylabel=$t(\xi)$,
      xmin=0, xmax=100,
      ymin=0, ymax=1.3,
      axis lines=left,
      width=0.8\textwidth,
      height=5.5cm,
      ytick={0,1},   
      xtick={2}    
  ]
  \addplot[domain=-1:80, samples=10000, color=blue]{(ln(x-1)/x+ln(x)-ln(x-1))/ln(2)};
  \end{axis}
  \end{tikzpicture}
  \caption{Khintchine spectrum $\big(\varphi(x)=d_1(x)\big)$}
  \label{fig:1}
\end{figure}
\medskip
The Lyapunov spectrum gains in interest if we realize that  $$\lim_{n\to\infty}\frac{1}{n}\sum_{j=0}^{n-1}\log\abs{T'(T^jx)}=(\log2)\cdot\lim_{n\to\infty}\frac{1}{n}\sum_{j=1}^nd_j(x).$$
Therefore, replacing $\xi$ by $\frac{\beta}{\log2}$ in \eqref{spec.1}, we obtain the Lyapunov spectrum. 

\begin{corollary}\label{thm.2}
   We have $\dim_HF(\log2)=0$ and for any $\beta>\log2$,
\begin{equation}
  \tilde{t}(\beta)=\dim_H F(\beta)=\frac{1}{\beta}\log(\dfrac{\beta}{\log2}-1)-\frac{1}{\log2}\log(1-\dfrac{\log2}{\beta}).
\end{equation}
   Furthermore, the Lyapunov spectrum $\tilde{t}:\beta\mapsto \tilde{t}(\beta)$ has the following properties:
  \begin{item}
    (1) $\tilde{t}(2\log2)=1$;
  \end{item}
  \begin{item}
    (2) $\lim_{\beta\to \log2} \tilde{t}(\beta)=0$ and $\lim_{\beta\to \infty} \tilde{t}(\beta)=0$;
  \end{item}
  \begin{item}
    (3) $\tilde{t}^{\prime}(\beta)>0$ for $\beta<2\log2$, $\tilde{t}^{\prime}(2\log2)=0$, and $\tilde{t}^{\prime}(\beta)<0$ for $\beta>2\log2$;
  \end{item}
  \begin{item}
    (4) $\lim_{\beta\to \log2} \tilde{t}^{\prime}(\beta)=\infty$ and $\lim_{\beta\to \infty} \tilde{t}^{\prime}(\beta)=0$;
  \end{item}

  \begin{item}
(5) the function $\tilde{t}(\beta)$ has exactly one inflection point.
  \end{item}

\end{corollary}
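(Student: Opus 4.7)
The plan is to deduce Corollary \ref{thm.2} from Theorem \ref{thm.1} via the simple change of variables $\xi=\beta/\log 2$, as hinted in the paragraph preceding the corollary. The argument has three short steps.

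First, I would record the pointwise identity $\lambda(x)=(\log 2)\gamma(x)$, valid whenever either limit exists. By \eqref{expansion}, $T$ is affine with slope $2^{d_1(x)}$ on each cylinder $\{d_1=n\}=(1/2^n,1/2^{n-1}]$, so $\log|T'(x)|=d_1(x)\log 2$ off a countable set. Iterating the cocycle via $d_{j+1}(x)=d_1(T^j x)$ yields
\[
\frac{1}{n}\sum_{j=0}^{n-1}\log|T'(T^j x)| = (\log 2)\cdot\frac{1}{n}\sum_{j=1}^{n}d_j(x),
\]
and the identity $\lambda(x)=(\log 2)\gamma(x)$ follows. Consequently $F(\beta)=E(\beta/\log 2)$, and therefore $\tilde{t}(\beta)=t(\beta/\log 2)$ for all $\beta\geq\log 2$.

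Second, the closed-form expression for $\tilde{t}(\beta)$ is obtained by substituting $\xi=\beta/\log 2$ into \eqref{spec.1} and rewriting $\log\xi-\log(\xi-1)=-\log(1-1/\xi)=-\log(1-\log 2/\beta)$; the boundary identity $\dim_H F(\log 2)=0$ is just $\dim_H E(1)=0$.

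Third, properties (1)--(5) for $\tilde{t}$ transfer from the corresponding properties of $t$ by the chain rule. Since
\[
\tilde{t}^{\prime}(\beta)=\frac{1}{\log 2}\,t^{\prime}(\beta/\log 2),\qquad \tilde{t}^{\prime\prime}(\beta)=\frac{1}{(\log 2)^2}\,t^{\prime\prime}(\beta/\log 2),
\]
the sign of $\tilde{t}^{\prime}$, the limits of $\tilde{t}$ and $\tilde{t}^{\prime}$ at the endpoints, and the set of inflection points of $\tilde{t}$ are all inherited one-to-one from those of $t$. In particular, $\xi=2$ corresponds to $\beta=2\log 2$, so $\tilde{t}(2\log 2)=t(2)=1$ and $\tilde{t}^{\prime}(2\log 2)=0$. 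No substantive obstacle arises; the corollary is essentially a change-of-variable restatement of Theorem \ref{thm.1}, with the only genuine computation being the routine algebraic simplification in the second step.
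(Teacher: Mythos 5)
Your proposal is correct and follows exactly the paper's route: the paper derives the corollary from Theorem \ref{thm.1} by observing that $\log|T'(x)|=d_1(x)\log 2$, hence $\lambda(x)=(\log 2)\gamma(x)$ and $F(\beta)=E(\beta/\log 2)$, and then substituting $\xi=\beta/\log 2$ into \eqref{spec.1}. Your version merely spells out the algebraic simplification and the chain-rule transfer of properties (1)--(5), which the paper leaves implicit.
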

\begin{figure}[t]
  \centering
 \begin{tikzpicture}
  \begin{axis}[
      axis lines=middle,
      xlabel=$\beta$,
      ylabel=$\tilde{t}(\beta)$,
      xmin=0, xmax=100,
      ymin=0, ymax=1.3,
      axis lines=left,
      width=0.8\textwidth,
      height=5.5cm,
      ytick={0,1}, 
      xtick={1.5},
      xticklabels={$2\log2$},                 
  ]
  \addplot[domain=-1:80, samples=10000, color=blue]{(1 / x) *ln(x/ln(2)-1)-(1/ln(2))*ln(1-ln(2)/x) };
  \end{axis}

  \end{tikzpicture}
  \caption{Lyapunov spectrum $\big(\varphi(x)=\log\abs{T'(x)}\big)$}
  \label{fig:2}
\end{figure}
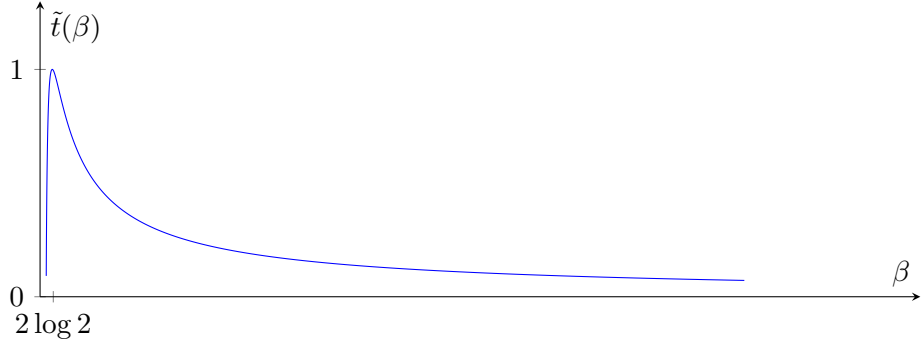
\medskip
We illustrate the Birkhoff spectra in Theorem \ref{thm.1} and Corollary \ref{thm.2} by Figures \ref{fig:1} and \ref{fig:2}.
\medskip

The same method also works for general potential $\varphi$. To illustrate some different Birkhoff spectra, we study the special cases of $\varphi(x)=\log(d_1(x))$ and $\varphi(x)=2^{d_1(x)}$. From the Birkhoff ergodic theorem, we deduce that for Lebesgue almost all \x,
 $$\lim_{n\to\infty}\frac{1}{n}\sum_{j=0}^{n-1}\log(d_1(T^jx))=\xi_0=\int\log\lceil-\log_2 x\rceil \,\mathrm{d}x=\sum_{n=1}^{\infty}2^{-n}\log n\approx 0.507834,$$
 and 
$$\lim_{n\to\infty}\frac{1}{n}\sum_{j=0}^{n-1}2^{d_1(T^jx)}=\int 2^{\lceil-\log_2 x\rceil} \,\mathrm{d}x=\infty.$$
Now let us state our main results for $\varphi(x)=\log(d_1(x))$ and $\varphi(x)=2^{d_1(x)}$.

\begin{theorem}\label{thm.3}
  Let $\varphi:(0,1]\to\R$ be the potential defined by $\varphi(x)=\log(d_1(x))$, with $d_1(x)=\lceil-\log_2 x\rceil$, and $E_{\varphi}(\xi)$ be defined as in \eqref{star}. Then, we have that the pressure function $P$ defined in \eqref{equa.0} is  $$P(t,q)=P_{\varphi}(t,q)=\log(\sum_{n=1}^{\infty}2^{-nt}\cdot n^q),$$ and that for any $\xi>0$, there exists a unique solution $\big(t(\xi),q(\xi)\big)$ to the system \eqref{equa.pressure}. Moreover, $\dim_H E_{\varphi}(0)=0$ and for any $\xi>0$,
$\dim_H E_{\varphi}(\xi)=t(\xi)$. 
Let $\xi_0=\int_0^1\log\lceil-\log_2 x\rceil \,\mathrm{d}x$. The Birkhoff spectrum $t:\xi\mapsto t(\xi)$ has the following properties:
\begin{item}
  (1) $t(\xi_0)=1;$
\end{item}
\begin{item}
  (2) $t^{\prime}(\xi)>0$ for $\xi<\xi_0$, $t^{\prime}(\xi_0)=0$, and $t^{\prime}(\xi)<0$ for $\xi>\xi_0;$ 
\end{item}
\begin{item}
  (3) $\lim_{\xi\to0}t(\xi)=0$ and $
  \lim_{\xi\to\infty}t(\xi)=0; $
\end{item}
\begin{item}
  (4) $\lim_{\xi\to0}t^{\prime}(\xi)=\infty;$
\end{item}
\begin{item}
  (5) $t^{\prime\prime}(\xi_0)<0$ and there exits $\xi_1>\xi_0$ such that $t^{\prime\prime}(\xi_1)>0$, so the function $t(\xi)$ is neither convex nor concave. 
\end{item}
\end{theorem}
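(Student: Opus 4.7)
The starting observation is that $|T'(x)|=2^{d_1(x)}$ and $\varphi(x)=\log d_1(x)$ depend only on the first digit of $x$, hence are constant on every first-order cylinder. Consequently, on each $n$th-order cylinder $I_n(\omega_1,\dots,\omega_n)$ the Birkhoff sum $S_n(-t\log|T'|+q\varphi)$ equals $\sum_{j=1}^{n}(-t\omega_j\log 2+q\log\omega_j)$, constant on $I_n$. The multiple sum in \eqref{equa.0} factorises as $(\sum_{n\geq 1}2^{-nt}n^q)^n$, and taking $\tfrac1n\log$ yields the closed form $P(t,q)=\log\sum_{n\geq 1}2^{-nt}n^q$; this series converges absolutely whenever $t>0$, $q\in\R$.

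\textbf{Unique solution and dimension formula.} Write $g(t,q):=\sum_n 2^{-nt}n^q$, so that $P=\log g$ is $C^\infty$ and strictly convex on $\{t>0\}\times\R$. For each fixed $t>0$, $q\mapsto\partial_q P(t,q)$ is strictly increasing and maps $\R$ onto $(0,\infty)$, so the second equation of \eqref{equa.pressure} uniquely determines $q=q(t,\xi)$ for every $\xi>0$. The first equation then reduces to $\Phi(t):=P(t,q(t,\xi))-q(t,\xi)\xi=0$; a direct computation using $\partial_q P=\xi$ gives $\Phi'(t)=\partial_t P<0$, so $\Phi$ is strictly decreasing in $t$. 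A boundary estimate (as $t\to 0^+$, $q(t,\xi)$ tends to the finite value $q_\infty(\xi)<-1$ characterised by $\sum n^{q_\infty}\log n/\sum n^{q_\infty}=\xi$, and $\Phi(t)\to\log g(0,q_\infty)-q_\infty\xi>0$ because $g(0,q_\infty)\geq 1$ and $-q_\infty\xi>0$; as $t\to\infty$ a saddle-point argument forces $\Phi(t)\to-\infty$) produces a unique zero $t=t(\xi)$, smooth in $\xi$ by the implicit function theorem. The identity $\dim_H E_\varphi(\xi)=t(\xi)$ for $\xi>0$ then follows from Theorem \ref{them.dim}. The edge case is handled directly: because $\varphi\geq 0$ with $\varphi=0$ only on $\{d_1=1\}$, every $x\in E_\varphi(0)$ has the digit $1$ of full density, and a routine cylinder-covering argument (bounding $|I_n|\leq 2^{-n}$ on these "mostly-$1$" prefixes) gives $\dim_H E_\varphi(0)=0$.

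\textbf{Sign of $t'$ and asymptotic properties.} Property (1) is the direct verification at $(t,q)=(1,0)$: $P(1,0)=\log\sum 2^{-n}=0$ and $\partial_q P(1,0)=\sum 2^{-n}\log n=\xi_0$, so $(1,0)$ solves \eqref{equa.pressure} at $\xi=\xi_0$. For (2), I differentiate the identity $P(t(\xi),q(\xi))=q(\xi)\xi$ in $\xi$; the $q'(\xi)$-terms cancel via $\partial_q P=\xi$, leaving $t'(\xi)=q(\xi)/\partial_t P(t(\xi),q(\xi))$. Since $\partial_t P<0$ everywhere, the sign of $t'$ is opposite to that of $q$; combined with the local computation below, which shows $q'(\xi_0)>0$, and with the uniqueness property $q=0\Rightarrow t=1\Rightarrow \xi=\xi_0$, this gives the claimed sign pattern. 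For (3) and (4), I would use sharp one-term asymptotics of $g$ in the two regimes: as $q\to-\infty$ the $n=1$ term dominates with $n=2$ as the leading correction, yielding $2^{q-t}\sim\xi/\log 2$, $t(\xi)\sim\xi\log(1/\xi)/(\log 2)^2$, and $t'(\xi)=q(\xi)/\partial_t P\to\infty$; as $q\to+\infty$ the series peaks near $n\sim q/(t\log 2)$ and the saddle-point estimate forces $t(\xi)\to 0$.

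\textbf{Inflection (5).} Implicit differentiation of $\partial_q P(t(\xi),q(\xi))=\xi$ gives $\partial_{tq}P\cdot t'+\partial_{qq}P\cdot q'=1$; since $t'(\xi_0)=0$, this yields $q'(\xi_0)=1/\partial_{qq}P(1,0)>0$, because $\partial_{qq}P(1,0)=\mathrm{Var}_{\mu}(\log n)>0$ for the probability measure $\mu(n)=2^{-n}$. Differentiating $t'(\xi)=q(\xi)/\partial_t P(t(\xi),q(\xi))$ once more and evaluating at $\xi_0$ (where $q=0$) gives $t''(\xi_0)=q'(\xi_0)/\partial_t P(1,0)<0$, since $\partial_t P(1,0)=-\log 2\cdot\sum n\,2^{-n}=-2\log 2<0$. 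For the existence of $\xi_1>\xi_0$ with $t''(\xi_1)>0$: on $(\xi_0,\infty)$ the function $t$ is smooth and strictly decreasing with $t(\xi),t'(\xi)\to 0$ as $\xi\to\infty$, so $t'$ cannot be monotone throughout, forcing $t''$ to change sign. The main technical obstacle I expect is the careful asymptotic expansion underlying (3)--(4); the closed-form expression for $P$ keeps this manageable, which is precisely the advantage of the P2GLE setting over the continued fraction case of \cite{A.H.Fan}.
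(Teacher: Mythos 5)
Your proof proposal follows essentially the same strategy as the paper --- compute the closed-form pressure from constancy on cylinders, solve the variational system implicitly, and read the spectrum off the first coordinate --- but several of the boundary and edge cases are handled too loosely, and one is genuinely inadequate as written.

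The gap I would flag most strongly is the treatment of $\dim_H E_\varphi(0)=0$. Bounding $\abs{I_n}\leq 2^{-n}$ on ``mostly-$1$'' prefixes does not suffice: the set of $n$-cylinders with $\frac1n\sum_j\log d_j<\varepsilon$ is infinite, since a sparse set of positions may carry arbitrarily large digits. If one drops the constraint entirely, $\sum_{(d_1,\dots,d_n)}\prod_j 2^{-s d_j}=(2^s-1)^{-n}$, which is summable only for $s>1$, so a bare size bound cannot reach small $s$. To get convergence for small $s$ one must exploit the constraint quantitatively, which is exactly what the Gibbs measure $\mu_{t,q(\xi)}$ does in the paper's Theorem~\ref{them.similar}: one writes
\begin{equation*}
\abs{J}^{t}\asymp e^{nP(t,q(\xi))}(d_1\cdots d_n)^{-q(\xi)}\mu_{t,q(\xi)}(J),
\end{equation*}
chooses $\xi$ close to $0$ with $q(\xi)<0$, and uses $\sum_J\mu_{t,q(\xi)}(J)\leq 1$ together with the exponential decay of $e^{nP(t,q(\xi))}$ along the constraint. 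Your sketch omits the measure-theoretic ingredient that actually closes the estimate.

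For the rest, the approach is correct but a couple of steps need tightening. The $t\to\infty$ boundary estimate in your existence argument (``a saddle-point argument forces $\Phi(t)\to-\infty$'') is asserted but not carried out; the paper sidesteps this by importing Proposition~4.11 of \cite{A.H.Fan}. In your argument for (5) you write that $t'(\xi)\to0$ as $\xi\to\infty$; this is neither asserted in the theorem nor established, and is not needed: since $t>0$, $t'<0$ on $(\xi_0,\infty)$ and $t(\xi)\to0$, $\abs{t'}$ is integrable on $(\xi_0,\infty)$, which together with $t''(\xi_0)<0$ (so $\abs{t'}$ is increasing near $\xi_0$) already forces a sign change of $t''$. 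On the positive side, your local computation of $t''(\xi_0)=q'(\xi_0)/\partial_t P(1,0)$ with $q'(\xi_0)=1/\partial_{qq}P(1,0)>0$ and $\partial_t P(1,0)=-2\log2$ is a clean, explicit verification that the paper itself defers to \cite{A.H.Fan}, and your sign-of-$q$ argument via $q'(\xi_0)>0$ plus the rigidity $q=0\Rightarrow t=1\Rightarrow\xi=\xi_0$ is a valid alternative to the paper's convexity argument (Proposition~\ref{prop.2}). The asymptotics for $\xi\to0$ match the paper's conclusions; the $\xi\to\infty$ saddle-point heuristic should be replaced by the paper's two-sided comparison $\xi_2(t)>\frac{\partial P}{\partial q}(t,0)$ combined with \eqref{equa.5.7}.
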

\begin{theorem} \label{thm.4}
  \medskip
Let $\varphi(x)=2^{d_1(x)}$ for $x\in(0,1]$. Then the pressure function $P$ defined in \eqref{equa.0} is
\[P(t,q)=P_{\varphi}(t,q)=\log(\sum_{n=1}^{\infty}2^{-nt}e^{2^nq}).\] 
We have that $\dim_H E_{\varphi}(2)=0$ and for any $\xi>2$, $\dim_HE_{\varphi}(\xi)=t(\xi)$, which is the first coordinate of the unique solution of the equation \eqref{equa.pressure}. The Birkhoff spectrum $t:\xi\mapsto t(\xi)$ has the following properties:
        \begin{item}
          (1) $\dim_H E_{\varphi}(\infty)=1;$
        \end{item}
        \begin{item}
          (2) $t^{\prime}(\xi)> 0$ for $\xi>2$;
        \end{item}
        \begin{item}
          (3) $\lim_{\xi\to2}t(\xi)=0$ and $ \lim_{\xi\to\infty}t(\xi)=1;$
        \end{item}
        \begin{item}
          (4) $\lim_{\xi\to2}t'(\xi)=\infty$ and $\lim_{\xi\to\infty}t'(\xi)=0;$
        \end{item}
      \end{theorem}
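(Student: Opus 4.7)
The pressure formula is obtained by observing that both $\abs{T'(x)}=2^{d_1(x)}$ and $\varphi(x)=2^{d_1(x)}$ are constant on each cylinder $I_n(\omega_1,\dots,\omega_n)$; hence the Birkhoff sum $S_n(-t\log\abs{T'(x)}+q\varphi(x))$ factorizes as $\sum_{i=1}^n(-t\log 2\cdot\omega_i+q\cdot 2^{\omega_i})$, the supremum in \eqref{equa.0} is attained trivially, and the inner sum equals $\bigl(\sum_{m\in\N}2^{-mt}e^{2^mq}\bigr)^n$. Taking $\frac{1}{n}\log$ yields the stated $P(t,q)$. The series $Z(t,q):=\sum_{m\in\N}2^{-mt}e^{2^mq}$ converges iff $q<0$ (with any $t$) or $q=0$ with $t>0$. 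I will work throughout via the probabilistic representation: setting $p_m=w_m/Z$ with $w_m=2^{-mt}e^{2^mq}$, one has $\partial_q P=\mathbb{E}_{t,q}[2^m]$ and $\partial_t P=-(\log 2)\mathbb{E}_{t,q}[m]$. Consequently $\partial_q P>2$ with $\partial_q P\to 2$ as $q\to-\infty$ (mass concentrating on $m=1$), $\partial_q P\to\infty$ as $q\to 0^-$ for $t\leq 1$ (since $\sum 2^{m(1-t)}$ diverges), $\partial_t P<0$, and $\partial_t\partial_q P=-(\log 2)\,\mathrm{Cov}_{t,q}(m,2^m)<0$.

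\textbf{Existence and uniqueness of }$(t(\xi),q(\xi))$. Fix $\xi>2$. The negative cross derivative $\partial_t\partial_q P<0$ implies $t\mapsto\partial_q P(t,q)$ is strictly decreasing with range $(2,\infty)$ (as $t\to-\infty$, $\partial_q P\to\infty$; as $t\to\infty$, $\partial_q P\to 2$), so for each $q<0$ there is a unique $t^*(q)\in\R$ with $\partial_q P(t^*(q),q)=\xi$. Define $h(q):=q\xi-P(t^*(q),q)$ on $(-\infty,0)$. By implicit differentiation of the defining relation and the envelope identity,
\[ h'(q)=-\partial_t P(t^*,q)\cdot\frac{dt^*}{dq}=-\partial_t P\cdot\Bigl(-\frac{\partial_{qq}P}{\partial_{tq}P}\Bigr)>0, \]
so $h$ is strictly monotone. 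A Laplace-type analysis in the limit $q\to-\infty$ gives $t^*\sim q\xi\to-\infty$ with the partition sum concentrating at $m^*\approx\log_2\xi$, so $P(t^*,q)\sim q\xi(1-\log\xi)$ and $h(q)\sim q\xi\log\xi\to-\infty$. As $q\to 0^-$, continuity yields $t^*(q)\to t^*(0,\xi)>1$ (the unique $t>1$ solving $\partial_q P(t,0)=\xi$), and since $P(t,0)=\log\frac{2^{-t}}{1-2^{-t}}<0$ for $t>1$, $h(0^-)=-P(t^*(0,\xi),0)>0$. Thus $h$ has a unique zero $q(\xi)<0$, yielding $t(\xi):=t^*(q(\xi))$. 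Theorem \ref{them.dim} then delivers $\dim_H E_\varphi(\xi)=t(\xi)$ for all $\xi>2$.

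\textbf{Properties of }$t$. Implicit differentiation of \eqref{equa.pressure} yields $t'(\xi)=q(\xi)/\partial_t P(t(\xi),q(\xi))$, and the signs $q<0$, $\partial_t P<0$ give (2). Item (1) is immediate: since $\int\varphi\,dx=\infty$ and $T$ is ergodic, the standard extension of Birkhoff's theorem to non-negative unbounded integrands implies $\frac1n\sum_{j=1}^n 2^{d_j(x)}\to\infty$ for Lebesgue-a.e.\ $x$, so $E_\varphi(\infty)$ has full Lebesgue measure and hence dimension $1$. As $\xi\to 2^+$, keeping the $m=1,2$ terms in $Z$ I derive $q(\xi)\xi\sim\log((\xi-2)/2)$, so $q\to-\infty$, $t(\xi)\sim-q(\xi-2)/\log 2\to 0$, and $\partial_t P\to-\log 2$ gives $t'(\xi)\sim-q/\log 2\to\infty$. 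As $\xi\to\infty$, $\partial_q P\to\infty$ forces $t\to 1^+$ and $q\to 0^-$ (since $\partial_q P(t,0)$ is finite for every $t>1$); at the critical limit $P(1,0)=0$ and $\mathbb{E}[m]=2$, so $t(\xi)\to 1$ and $t'(\xi)\to 0$.

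\textbf{The main obstacle: }$\dim_H E_\varphi(2)=0$. Because $2^{d_j}\geq 2$ with equality iff $d_j=1$, any $x\in E_\varphi(2)$ must satisfy $\#\{j\leq n:d_j(x)\neq 1\}/n\to 0$, so
\[ E_\varphi(2)\subset\bigcap_{\varepsilon>0}\bigcup_{N\geq 1}A_{N,\varepsilon},\quad A_{N,\varepsilon}:=\{x:\#\{j\leq n:d_j(x)\neq 1\}\leq\varepsilon n\ \forall n\geq N\}. \]
Covering $A_{N,\varepsilon}$ by level-$n$ cylinders $I_n(\omega_1,\dots,\omega_n)$ with at most $\lfloor\varepsilon n\rfloor$ non-$1$ entries, using $\mathrm{diam}\,I_n(\omega)=2^{-(\omega_1+\cdots+\omega_n)}$ and the geometric series $\sum_{\omega\geq 2}2^{-s\omega}=2^{-2s}/(1-2^{-s})$, I obtain
\[ \sum_{\omega}(\mathrm{diam}\,I_n(\omega))^s\leq 2^{-sn}\sum_{k=0}^{\lfloor\varepsilon n\rfloor}\binom{n}{k} u_s^k,\quad u_s:=\frac{2^{-s}}{1-2^{-s}}. \]
For any fixed $s>0$, choose $\varepsilon=\varepsilon(s)>0$ small enough that $H(\varepsilon)+\varepsilon\log_2 u_s<s$ (possible since $H(\varepsilon)\to 0$ and $\varepsilon\log_2 u_s\to 0$ as $\varepsilon\to 0$). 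Then the entropy bound $\binom{n}{\lfloor\varepsilon n\rfloor}\leq 2^{nH(\varepsilon)}$ dominates the partial sum by its largest term and yields an upper estimate of order $(n+1)2^{n(H(\varepsilon)+\varepsilon\log_2 u_s-s)}$, which decays geometrically. Hence $\mathcal H^s\bigl(\bigcup_N A_{N,\varepsilon}\bigr)=0$, so $\dim_H E_\varphi(2)\leq s$; letting $s\to 0$ concludes $\dim_H E_\varphi(2)=0$.
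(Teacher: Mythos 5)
Your pressure computation, the probabilistic identities for $\partial_qP$ and $\partial_tP$, and the arguments for items (1), (2) and (4) essentially coincide with the paper's. You genuinely diverge in two places, both to your credit. First, for existence and uniqueness of $(t(\xi),q(\xi))$ you give an actual argument (monotonicity of $h(q)=q\xi-P(t^*(q),q)$ via the sign of $\partial_{tq}P=-(\log 2)\,\mathrm{Cov}(m,2^m)<0$), whereas the paper merely asserts the analogue of Fan--Liao--Wang--Wu's proposition. Second, for $\dim_HE_\varphi(2)=0$ the paper only says ``apply similar arguments'' to its Gibbs-measure covering of Theorem \ref{them.similar}; your combinatorial covering --- reducing membership in $E_\varphi(2)$ to vanishing frequency of digits $\neq 1$, then bounding $\sum_\omega(\operatorname{diam}I_n(\omega))^s\le 2^{-sn}\sum_{k\le\varepsilon n}\binom nk u_s^k$ and beating the entropy term --- is correct, elementary, and arguably cleaner than what the paper gestures at.

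Two points need tightening. (i) Your $t^*(q)$ ranges over all of $\R$ and tends to $-\infty$ as $q\to-\infty$, while $t^*(0^-)>1$; so the zero of $h$ only gives some $t(\xi)\in(-\infty,t^*(0))$, and you never verify $0\le t(\xi)\le 1$, which is needed both for the solution to lie in the region $\tilde D$ where the Gibbs-measure machinery applies and for $t(\xi)$ to be a Hausdorff dimension at all (your ``$t\to 1^+$'' as $\xi\to\infty$ even suggests $t(\xi)>1$, which cannot happen; the paper's Proposition \ref{prop.4.12} places the solution in $\{0\le t\le1\}$ and the approach is from below). One way to close this: $P(t,q)=q\xi<0$ together with $P(t,0)\ge0$ for $t\le1$ and monotonicity in $q$ does not immediately force $t\le1$, so an explicit argument (e.g.\ the contradiction the paper runs for $q(\xi)=0$, plus continuity from the known point $\xi\to\infty$) is required. (ii) The limits in (3) and the $q\to-\infty$ behaviour of $h$ rest on Laplace-type asymptotics that are asserted rather than proved (indeed ``$t^*\sim q\xi$'' and ``$P\sim q\xi(1-\log\xi)$'' look wrong --- concentration on $m=1,2$ gives $t^*\approx 2q/\log 2$ and $P(t^*,q)=O(1)$ --- although the only conclusion you use, $h(q)\to-\infty$, survives because $q\xi\to-\infty$ dominates). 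The paper's route here, via the inverse function $\xi(t)=\partial_qP(t,q(t))$ and the limits $\lim_{q\to-\infty}\partial_qP(0,q)=2$ and $\lim_{(t,q)\to(1,0)}\partial_qP=\infty$ from its Proposition \ref{prop.6}, is the rigorous replacement for your expansions and you already have all the ingredients for it.
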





  \medskip
 An outline of this paper is as follows. In Section 2, we review the Ruelle operator theory and the method of pressure function. In Sections 3 and 4, we prove the results for Khintchine and Lyapunov spectra (Theorem \ref{thm.1} and Corollary \ref{thm.2}). Sections 5 and 6 are devoted to proving Theorems \ref{thm.3} and \ref{thm.4} respectively.

 \section{Conformal iterated function systems and Ruelle operator theory}
 \subsection{Conformal iterated function systems}
 From now on, $X$ stands for a non-empty compact connected subset of $\R^d$ equipped with a metric $\rho$. Let $S=\{\phi_i: X\to X:i\in\N\}$ be an iterated function system for which there exists $0<s<1$ such that for each $i\in\N$ and all $x,y\in X$,
 \begin{equation}
   \rho(\phi_i(x),\phi_i(y))\leq s\rho(x,y).
 \end{equation}  
 We follow the notation of \cite{A.H.Fan}.
 \begin{itemize}
   \item $\N^n:=\{\omega:\omega=(\omega_1,\cdots\omega_n),\omega_k\in\N,1\leq k\leq n\}.$
   \item $\N^{\infty}:=\prod _{i=1}^{\infty}\N$.
   \item $[\omega|_n]=[\omega_1\cdots\omega_n]=\{x\in\N^{\infty}:x_1=\omega_1,\cdots x_n=\omega_n\}$.
   \item $\abs{\omega}$ denotes the length of $\omega.$
   \item $\phi_{\omega}=\phi_{\omega_1}\circ\phi_{\omega_2}\cdots\phi_{\omega_n}$ for $\omega=(\omega_1,\cdots\omega_n)\in\N^n$, $n\geq 1$.
   \item $\norm{\phi'_{\omega}}:=\sup_{x\in X}\abs{\phi'_{\omega}(x)}$ for $\omega\in\bigcup_{n\geq 1}\N^n $.
   \item $C(X)$ denotes the space of continuous functions on $X$.
   \item $\sigma:\N^{\infty} \to \N^{\infty}$ is the shift tramsformation.
   \item $\norm{\cdot}_{\infty}$ denotes the supremum norm on $C(X)$.
   
 \end{itemize}
 \medskip

 Let $\partial X$ denote the boundary of $X$ and Int$X$ denote the interior of $X$. Our definitions agree with the classical one for conformal iterated function system. For details, see \cite{book1} and \cite{A.H.Fan}.
 
\medskip
An iterated function system $S=\{\phi_i\}_{i\in \N}$ (IFS for short)  is said to be conformal if the following conditions are satisfied:
\begin{enumerate}
  \item[(1)] The open set condition is satisfied for $\text{Int}(X)$, i.e., $\phi_i(\text{Int}X)\subset\text{Int}(X)$ for each $i\in\N$ and 

  $\phi_i(\text{Int}X)\bigcap \phi_j(\text{Int}X)=\emptyset$ for each $(i,j)\in\N^2,i\neq j$.
  \item[(2)] There exists an open connected set $V$ with $X\subset V\subset \R^d$ such that all maps $\phi_i$, $i\in\N$, extend to $C^1 $ conformal  diffemorphisms of $V$ into $V$.
  \item[(3)] There exist $h,l>0$ such that for each $x\in\partial X\subset \R^d$, there exists an open cone Con$(x,h,l)\subset$ Int$X$ with vertex $x$, central angle of Lebesgue measure $h$ and altitude $l$.   
  \item[(4)]  There exists $K\geq 1$ such that $\abs{\phi_{\omega}'(y)}\leq K\abs{\phi_{\omega}'(x)}$ for every $\omega \in \cup_{n\geq 1} \N^n$ and every pair of points $x,y\in V$ (Bounded distortion property). 
\end{enumerate}
\medskip

 The topological pressure function for the  conformal iterated function system $S=\{\phi_i: X\to X:i\in\N\}$ is defined as
\begin{equation}
  P(t)= \lim_{n\to\infty}\frac{1}{n}\log \sum_{\omega\in\N^n}\norm{\phi'_{\omega}}^t.
\end{equation}
The system $S$ is said to be regular if there exists $t\geq 0$ such that $P(t)=0.$

\medskip
 Let $X=[0,1]$. The P2GLE dynamical system is associated with an IFS:
$$S=\{\phi_i(x)=2^{-i}(x+1):i\in\N\}.$$ 
In fact, the functions $\phi_i$ are inverse branches of the transformation $T$ defined in \eqref{expansion}.
The coding map $\pi:N^{\infty}\to X$ is defined by 
$$\pi(\omega)=\bigcap_{n=1}^{\infty}\phi_{\omega|_n}(X)=\sum_{n=1}^{\infty}2^{-(\omega_1+\omega_2+\cdots\omega_n)}\quad\text{for any }\omega=(\omega_1,\omega_2\cdots). $$

\medskip
Notice that for each $i\in\N$ and for all $x,y\in[0,1]$,
$$\abs{\phi_i(x)-\phi_i(y)}\leq \frac{1}{2}\abs{x-y}.$$

\begin{lemma}
  The IFS $S$ associated to P2GLE is conformal.
\end{lemma}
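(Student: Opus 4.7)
The plan is to verify, one at a time, the four defining conditions of a conformal IFS listed in the excerpt, specialized to $X=[0,1] \subset \R$ (so $d=1$). The argument is almost entirely routine computation, and the key observation that simplifies everything is that each $\phi_i(x)=2^{-i}(x+1)$ is \emph{affine}, so derivatives of every composition $\phi_\omega$ are constants.

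First, for the open set condition (1), I compute directly $\phi_i((0,1)) = (2^{-i}, 2^{1-i})$. These are open subintervals of $(0,1)=\mathrm{Int}(X)$, and for $i\neq j$ the intervals $(2^{-i},2^{1-i})$ and $(2^{-j},2^{1-j})$ are disjoint since the collection $\{(2^{-i}, 2^{1-i})\}_{i\in\N}$ tiles $(0,1)$ up to a countable set of endpoints. Next, for condition (2), I pick an open connected extension $V$, say $V=(-1/2,\,3/2)$; each $\phi_i$ extends to a $C^{\infty}$ affine map on all of $\R$, with derivative equal to the positive constant $2^{-i}$, hence a (trivially conformal) diffeomorphism of $V$, and a direct endpoint check gives $\phi_i(V)=(2^{-i-1},\,5\cdot 2^{-i-1})\subset V$ for every $i\ge 1$.

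For the cone condition (3), since $\partial X=\{0,1\}$ is just two points in $\R$, the requirement degenerates: at each boundary point we only need a one-sided interval of positive length contained in $\mathrm{Int}(X)$. This is immediate from $(0,1)\subset [0,1]$, so one can take, for instance, $h=1$ and $l=1/2$. Finally, for the bounded distortion property (4), the chain rule together with affineness gives
\[
\phi_\omega'(x) \;=\; \prod_{k=1}^{n} \phi_{\omega_k}'\bigl(\phi_{\omega_{k+1}}\circ\cdots\circ\phi_{\omega_n}(x)\bigr) \;=\; \prod_{k=1}^{n} 2^{-\omega_k} \;=\; 2^{-(\omega_1+\cdots+\omega_n)},
\]
which is independent of $x\in V$. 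Therefore $|\phi_\omega'(y)|=|\phi_\omega'(x)|$ for all $x,y\in V$ and every finite word $\omega$, so (4) holds with constant $K=1$.

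There is no real obstacle in this proof; the whole difficulty of conformality, usually concentrated in the distortion estimate, collapses here because the maps are similitudes. The only point where one has to be slightly careful is the interpretation of the cone condition in one dimension, but as noted above it reduces to an utterly trivial statement about the endpoints of $[0,1]$.
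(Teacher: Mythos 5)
Your proof is correct and follows essentially the same route as the paper: a direct verification of the four conditions, exploiting that the maps are affine similitudes so the distortion constant is $K=1$. The only differences are cosmetic — you choose $V=(-1/2,3/2)$ where the paper takes $V=(-1,2)$, and you verify the one-dimensional cone condition at $\partial X=\{0,1\}$ directly rather than citing Mauldin--Urba\'nski as the paper does; both are fine.
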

\begin{proof}(1) The open set condition is satisfied for $\text{Int}X=(0,1)$, since $$\phi_i(\text{Int}X)=(2^{-i},2^{-i+1})\subset\text{Int}X \quad\text{for each } i\in\N,$$  and $$\phi_i(\text{Int}X)\bigcap \phi_j(\text{Int}X)=\emptyset \quad\text{ for each } (i,j)\in\N^2,i\neq j.$$\\
(2) Let $V=(-1,2)$. It follows that $X\subset V$ and  all maps $\phi_i, i\in\N$, extend to $C^1 $ conformal  diffemorphisms of $V$ into $V$.\\
(3) The third condition of conformal IFS was proved by \cite[Section 2]{ifs}. \\
(4) The bounded distortion property holds, since $\abs{\phi'_{\omega}(y)}=\abs{\phi'_{\omega}(x)}=2^{-(\omega_1+\omega_2+\cdots + \omega_n)}$ for every $\omega \in \N^n, n\geq 1$ and every pair of points $x,y\in V.$
\end{proof}

\begin{lemma}
  The iterated function system $S$ is regular.
\end{lemma}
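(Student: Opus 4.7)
The plan is to exploit the fact that for this particular IFS the derivatives $\phi'_i$ are constants, which makes the pressure function completely explicit.

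First I would compute $\|\phi'_\omega\|$ for an arbitrary finite word $\omega = (\omega_1,\ldots,\omega_n) \in \N^n$. Since $\phi_i(x) = 2^{-i}(x+1)$ has constant derivative $\phi'_i(x) = 2^{-i}$ on all of $[0,1]$, the chain rule immediately gives
\[
|\phi'_\omega(x)| = \prod_{j=1}^{n} |\phi'_{\omega_j}(\phi_{\omega_{j+1}}\circ\cdots\circ\phi_{\omega_n}(x))| = 2^{-(\omega_1+\omega_2+\cdots+\omega_n)},
\]
independent of $x$. Hence $\|\phi'_\omega\| = 2^{-(\omega_1+\cdots+\omega_n)}$.

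Next I would plug this into the defining sum for the topological pressure. Because the exponent is a sum over the coordinates of $\omega$, the multi-sum factorizes:
\[
\sum_{\omega\in\N^n}\|\phi'_\omega\|^t = \sum_{\omega_1,\ldots,\omega_n\in\N} 2^{-t(\omega_1+\cdots+\omega_n)} = \left(\sum_{k=1}^{\infty} 2^{-tk}\right)^{n}.
\]
For $t>0$ the inner series converges to $\dfrac{1}{2^t-1}$. Substituting and passing to the limit gives the closed form
\[
P(t) = -\log(2^t-1), \quad t>0.
\]

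Finally, solving $P(t)=0$ amounts to $2^t-1 = 1$, i.e.\ $t=1$. Since $1\geq 0$, the system $S$ satisfies the regularity condition with the distinguished value $t=1$, which is as expected because the attractor of $S$ is all of $[0,1]$. There is no real obstacle here; the only subtle point is to verify convergence of the geometric series before identifying the zero of $P$, which is automatic for $t>0$.
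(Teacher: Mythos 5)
Your proposal is correct and follows essentially the same route as the paper: compute $\|\phi'_\omega\| = 2^{-(\omega_1+\cdots+\omega_n)}$, factorize the multi-sum into an $n$th power of the geometric series $\sum_k 2^{-tk} = 1/(2^t-1)$, obtain $P(t) = -\log(2^t-1)$, and observe $P(1)=0$. The only difference is cosmetic: you spell out the chain rule and the convergence remark, which the paper leaves implicit.
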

\begin{proof}
  Observe that $$\norm{\phi'_{\omega}}=\sup_{x\in X}\abs{\phi'_{\omega}(x)}=2^{-(\omega_1+\omega_2+\cdots +\omega_n)}.$$
  By definition, the topological pressure function for $S$ is
  \begin{align*}
    P(t)={} & \lim_{n\to\infty}\frac{1}{n}\log \sum_{\omega\in\N^n}\norm{\phi'_{\omega}}^t\\
    ={} & \lim_{n\to\infty}\frac{1}{n}\log \sum_{\omega_1\in\N}\sum_{\omega_2\in\N}\cdots\sum_{\omega_n\in\N}2^{-t(\omega_1+\omega_2+\cdots +\omega_n)}\\
    ={} & -\log(2^{t}-1).
  \end{align*}
It is easily seen that $P(1)=0$. Hence, $S$ is regular. 
\end{proof}

\subsection{Ruelle  operator theory}
Let $\beta>0$. A H{\"o}lder family of functions of order $\beta$ is a family of continuous functions $$G=\{g^{(i)}:X\to \R\}_{i\in\N}$$ such that 
 $$V_{\beta}(G)=\sup_{n\geq 1}V_n(G)<\infty,$$where
 $$V_n(G)=\sup_{\omega\in\N^n}\sup_{x,y\in X}\Big\{\left|{g^{(\omega_1)}(\phi_{\sigma(\omega)}(x))-g^{(\omega_1)}(\phi_{\sigma(\omega)}(y))}\right|\Big\}e^{\beta(n-1)}.$$

 \medskip
 A H\"{o}lder family of functions $G=\{g^{(i)}:X\to \R\}_{i\in\N}$ is said to be strong if 
 \begin{equation}\label{strong}
   \sum_{i\in\N}\norm{e^{g^{(i)}}}_{\infty}<\infty.
 \end{equation}

 \medskip
 The Ruelle operator on $C(X)$ associated with $G$ is defined by
$$\mathcal{L}_{G}(h)(x):=\sum_{i\in\N}e^{g^{(i)}}h(\phi_i(x)),\quad\forall  h\in C(X).$$
Denote by $\mathcal{L}^*_{G}$ the dual of $\mathcal{L}_{G}$.

\medskip
 The topological pressure of $G$ is defined by
 \begin{equation}\label{pressure}
   P(G):=\lim_{n\to\infty}\frac{1}{n}\log\sum_{\abs{\omega}=n}\exp(\sup_{x\in X}\sum_{j=1}^ng^{(\omega_j)}\circ \phi_{\sigma^j\omega}(x)).
 \end{equation}

 \medskip
Let $S=\{\phi_i:X\to X\}_{i\in\N}$ be a conformal iterated function system, $\Psi=\{\psi_i: X\to\R, i\in\N\}$, and $F=\{f_i: X\to\R, i\in\N\}$ be two families of real-valued H{\"o}lder functions. The amalgamated functions on $\N^{\infty}$ associated with $\Psi$ and $F$ are defined as 
$$\tilde{\psi}(\omega)=\psiassert,\qquad \tilde{f}(\omega)=\fassert.$$



\medskip
The following theorems will be useful.
\begin{theorem}({\cite[Theorem 6.4]{16}})\label{core}
  Let $\Psi=\{\psi_i\}_{i\in\N}$ and $F=\{f_i\}_{i\in\N}$ be two H\"{o}lder families of functions. Suppose the sets $\{i\in\N:\sup_{x}(\psi_i(x))>0\}$ and $\{i\in\N:\sup_{x}(f_i(x))>0\}$ are finite. Then the function $(t,q) \mapsto P(t,q)=P(t\Psi-qF)$ is real analytic with respect to $(t,q)\in \text{Int}(D)$, where 
  $$D=\Big\{(t,q):\sum_{i\in\N}\exp(\sup_x(t\psi_i(x)-qf_i(x)))<\infty\Big\}.$$
\end{theorem}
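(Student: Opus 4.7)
The plan is to identify $P(t,q)$ with the logarithm of the leading eigenvalue of a family of Ruelle operators depending on $(t,q)$, and then to invoke analytic perturbation theory to transfer analyticity from the operator family to its spectral data. Throughout, I would work on the symbolic space $\N^\infty$ with the amalgamated potential $\tilde G_{t,q}(\omega):=t\tilde\psi(\omega)-q\tilde f(\omega)$ and the Banach space $\mathcal B$ of bounded Hölder-continuous functions on $\N^\infty$ (equipped with the metric $d(\omega,\omega')=e^{-|\omega\wedge\omega'|}$), and use the Ruelle operator $\mathcal L_{t,q}:=\mathcal L_{\tilde G_{t,q}}$ from the previous subsection.

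First I would verify that $(t,q)\in\mathrm{Int}(D)$ is exactly the condition under which $\mathcal L_{t,q}$ is a bounded operator on $\mathcal B$: the summability in the definition of $D$ gives $\sum_i\|e^{t\psi_i-qf_i}\|_\infty<\infty$, and the Hölder-family hypothesis gives uniform control on $V_\beta$. Next I would check, using the finiteness of $\{i:\sup\psi_i>0\}$ and $\{i:\sup f_i>0\}$, that for $(t_0,q_0)\in\mathrm{Int}(D)$ there is a complex neighbourhood $U\subset\C^2$ on which all but finitely many summands in $\mathcal L_{t,q}$ have $\sup_x(t\psi_i(x)-qf_i(x))$ majorized, uniformly in $(t,q)\in U$, by a fixed summable sequence. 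This is the crux of the finiteness assumption: without it, complex perturbations of $(t,q)$ could enlarge the positive tails of $t\psi_i-qf_i$ and destroy integrability, so the hypothesis is what lets me extend $(t,q)\mapsto\mathcal L_{t,q}$ to an operator-valued holomorphic map on $U$ by termwise differentiation and a Weierstrass/$M$-test type argument.

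Once $(t,q)\mapsto\mathcal L_{t,q}\in\mathcal B(\mathcal B)$ is holomorphic on $\mathrm{Int}(D)$, I would invoke the infinite-alphabet Ruelle-Perron-Frobenius theorem of Mauldin-Urbanski to conclude that, at each real $(t,q)\in\mathrm{Int}(D)$, the operator $\mathcal L_{t,q}$ has a simple, isolated leading eigenvalue $\lambda(t,q)=e^{P(t,q)}$ separated from the rest of the spectrum by a gap (quasi-compactness of $\mathcal L_{t,q}$ on $\mathcal B$ being the key ingredient here). The standard Kato-Rellich analytic perturbation theorem for isolated simple eigenvalues of holomorphic operator families then yields that $\lambda$ extends to a holomorphic function on the complex neighbourhood $U$, and taking logarithms gives real analyticity of $P(t,q)=\log\lambda(t,q)$ on $\mathrm{Int}(D)$.

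The main obstacle I expect is Step two combined with the quasi-compactness input in Step three: one has to ensure both that the operator family varies analytically in the operator norm (and not merely in some weaker topology) \emph{uniformly} near $(t_0,q_0)$, and that the spectral gap persists under complex perturbation, so that Kato's theorem applies. The finiteness conditions in the hypothesis are the analytic ingredient that makes the first half work; the second half relies on the Lasota-Yorke/quasi-compactness estimate for Ruelle operators associated to strong Hölder families, which needs to be robust enough to survive a small complex perturbation of the potential. Once these two points are in hand, analyticity of $P$ follows by an essentially formal argument.
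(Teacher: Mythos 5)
The paper does not prove this statement: Theorem \ref{core} is quoted verbatim from Hanus--Mauldin--Urba\'nski \cite[Theorem 6.4]{16}, so there is no internal proof to compare against. Your outline is nonetheless the standard route and matches the argument of that reference: realize $e^{P(t,q)}$ as the leading eigenvalue of the Ruelle operator $\mathcal L_{t\Psi-qF}$ on a Banach space of H\"older functions over $\N^\infty$, extend $(t,q)\mapsto\mathcal L_{t\Psi-qF}$ to a holomorphic operator-valued map on a complex neighbourhood of each point of $\mathrm{Int}(D)$, invoke the infinite-alphabet Ruelle--Perron--Frobenius theory (quasi-compactness and a simple, isolated leading eigenvalue) at the real parameter, and finish with Kato--Rellich analytic perturbation of isolated simple eigenvalues, taking logarithms to obtain real analyticity of $P$. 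You have also correctly flagged the two delicate ingredients, namely operator-norm holomorphy of the family and persistence of the spectral gap.

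One small refinement on where you locate the role of the finiteness hypothesis. Since $\psi_i$ and $f_i$ are real-valued, $\abs{e^{t\psi_i(x)-qf_i(x)}}=e^{\operatorname{Re}(t)\psi_i(x)-\operatorname{Re}(q)f_i(x)}$ depends only on the real parts of $(t,q)$; a small complex polydisc centred at a real point of $\mathrm{Int}(D)$ therefore projects onto a real neighbourhood contained in $D$, and boundedness of $\mathcal L_{t,q}$ is not, in itself, endangered by passing to complex parameters. What the finiteness of $\{i:\sup\psi_i>0\}$ and $\{i:\sup f_i>0\}$ really buys is monotonicity of the tail of the potential family in $(t,q)$: for all but finitely many $i$ one has $\psi_i,f_i\le 0$, so increasing $\operatorname{Re}(t)$ and decreasing $\operatorname{Re}(q)$ only decreases $\sup_x(t\psi_i-qf_i)$. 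This produces a single summable majorant valid over the whole polydisc, which makes the Weierstrass $M$-test for termwise differentiation and the uniformity of the RPF estimates clean. This is a sharpening of your heuristic rather than a gap; your overall strategy stands and agrees with the cited source.
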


\begin{theorem}({\cite[Section 2]{16}})\label{measure}
  Suppose that $t\Psi-qF$ is a strong H\"{o}lder family for any $(t,q)\in D$. For each $(t,q)\in D$, there exists a unique $(t\Psi-qF)$-conformal probability measure $\nu_{t,q}$ on $X$ such that $\mathcal{L}^*_{t\Psi-qF}\nu_{t,q}=e^{P(t,q)}\nu_{t,q}$, and a unique shift invariant ergodic probability measure $\tilde{\mu}_{t,q}$ on $\N^{\infty}$ such that the measure $\mu_{t,q}:=\tilde{\mu}_{t,q}\circ \pi^{-1}$ on $X$ is equivalent to $\nu_{t,q}$ and the Gibbs property is satisfied:
  $$\frac{1}{C}\leq \dfrac{\tilde{\mu}_{t,q}([\omega|_n])}{\exp(\sum_{j=1}^n(t\Psi-qF)^{(\omega_j)}(\pi(\sigma^j\omega))-nP(t,q))}\leq C\quad\text{for all } \omega\in\N^{\infty}.$$
\end{theorem}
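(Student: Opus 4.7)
The plan is to follow the standard Mauldin--Urbanski thermodynamic-formalism strategy adapted to infinite conformal IFS, constructing $\nu_{t,q}$ first as a fixed point of the dual Ruelle operator and then building $\tilde\mu_{t,q}$ from a positive eigenfunction. Throughout, write $\Phi := t\Psi - qF$ and $\mathcal{L} := \mathcal{L}_{\Phi}$. The strong H\"older hypothesis $\sum_i \|e^{\Phi^{(i)}}\|_\infty < \infty$ immediately gives that $\mathcal{L}$ maps $C(X)$ into $C(X)$ boundedly, and together with the H\"older bound $V_\beta(\Phi)<\infty$ it ensures that $\mathcal{L}$ preserves a suitable cone of strictly positive H\"older functions.

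First I would construct the conformal measure $\nu_{t,q}$. Consider the normalized dual map $\nu \mapsto (\mathcal{L}^*\nu)/(\mathcal{L}^*\nu)(\mathbf{1})$ on the space of Borel probability measures on the compact set $X$; this map is continuous in the weak-$*$ topology, and the space of probability measures is compact convex, so the Schauder--Tychonoff fixed-point theorem produces a measure $\nu_{t,q}$ with $\mathcal{L}^*\nu_{t,q} = \lambda\, \nu_{t,q}$. To identify $\lambda = e^{P(t,q)}$, iterate and integrate: $\int \mathcal{L}^n\mathbf{1}\diff\nu_{t,q} = \lambda^n$, then compare with the combinatorial sum defining $P(t,q)$ in \eqref{pressure} using the bounded distortion property to replace suprema by values.

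Next I would obtain a positive eigenfunction $h$ of $\mathcal{L}$ with eigenvalue $e^{P(t,q)}$. Bounded distortion of the conformal IFS forces $\{e^{-nP(t,q)}\mathcal{L}^n\mathbf{1}\}_n$ to be uniformly bounded and equicontinuous on $X$; Arzel\`a--Ascoli together with a Cesàro averaging argument extracts a positive continuous fixed point $h$. Setting $\mu_{t,q} = h\,\nu_{t,q}$ on $X$ and pulling back through the coding map $\pi$ yields a shift-invariant probability $\tilde\mu_{t,q}$ on $\N^\infty$ with $\mu_{t,q} = \tilde\mu_{t,q}\circ\pi^{-1}$. Uniqueness and ergodicity then follow from a quasi-compactness / spectral gap argument for $\mathcal{L}$ on the Banach space of bounded H\"older continuous functions, which gives that $e^{P(t,q)}$ is a simple isolated eigenvalue and hence forces uniqueness of both $\nu_{t,q}$ and $\tilde\mu_{t,q}$.

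Finally, the Gibbs estimate is a telescoping computation: for a cylinder $[\omega|_n]$, iterating conformality gives
\[
\nu_{t,q}(\phi_{\omega|_n}(X)) = e^{-nP(t,q)}\exp\!\Big(\textstyle\sum_{j=1}^n \Phi^{(\omega_j)}(y_j)\Big)
\]
for some points $y_j\in\phi_{\sigma^j\omega|_{n-j}}(X)$; replacing $y_j$ by $\pi(\sigma^j\omega)$ introduces a multiplicative error which is uniformly bounded above and below by the H\"older bound $V_\beta(\Phi)$ combined with the contraction rate $s<1$ of the IFS. Transferring to $\tilde\mu_{t,q}$ via $h\,\diff\nu_{t,q}$ costs only the uniform bound $\inf h \le h \le \sup h$, producing the two-sided Gibbs inequality with a single constant $C$. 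The main obstacle throughout is the infinite alphabet: compactness of the iterates and summability of the tail sums are no longer automatic as in the finite case, so the strong H\"older hypothesis has to be used carefully at every step to prevent mass from escaping to infinity along symbolic orbits, and the spectral gap step (needed for uniqueness and ergodicity) requires a genuinely non-trivial non-compact Perron--Frobenius argument rather than a direct finite-dimensional one.
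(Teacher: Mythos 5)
This theorem is quoted by the paper directly from \cite[Section~2]{16} (Hanus--Mauldin--Urba\'nski) and is not reproved there, so there is no in-paper argument to compare against; the only thing the paper does with it is invoke it. Your sketch is a reasonable high-level outline of the standard Mauldin--Urba\'nski thermodynamic formalism for infinite conformal IFS and captures the essential steps (Schauder--Tychonoff for the conformal measure, bounded distortion plus Arzel\`a--Ascoli for a positive eigenfunction, the Gibbs estimate by telescoping, and a quasi-compactness argument for uniqueness/ergodicity), so as a roadmap it is sound rather than mistaken.

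Two remarks on fidelity to the cited source. First, Hanus--Mauldin--Urba\'nski set up the operator, conformal measure, and eigenfunction on the symbolic space $\N^\infty$ and only then push forward through $\pi$ to $X$; you go the other way, constructing $\nu_{t,q}$ on $X$ and pulling back. In the P2GLE case $\pi$ is essentially a bijection so the reversal causes no harm, but in general working symbolically first is cleaner (it sidesteps questions of injectivity of $\pi$ and of what the dual operator does to measures on $X$). Second, the step you compress into ``a quasi-compactness/spectral gap argument'' is the genuinely non-trivial part in the infinite-alphabet setting; the reference devotes the bulk of its Section~2 to establishing this on a suitable Banach space of bounded H\"older functions over $\N^\infty$, and simply invoking a spectral gap without indicating why the strong H\"older condition restores quasi-compactness would leave the central point unproved if this were being written out in full.
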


\begin{theorem}({\cite{16}})\label{thm.6}
If $t\Psi-qF$ is a strong  H{\"o}lder family for any $(t,q)\in D$ and 
  $$\int\Big(\abs{\tilde{\psi}}+\abs{\tilde{f}}\Big) \,\mathrm{d}\tilde{\mu}_{t,q}<\infty,$$  then 
  \begin{equation}\label{ergodic-reason}
    \dfrac{\partial P}{\partial t}=\int \tilde{\psi} \,\mathrm{d} \tilde{\mu}_{t,q} \quad\text{ and }\quad\dfrac{\partial P}{\partial q}=-\int \tilde{f} \,\mathrm{d} \tilde{\mu}_{t,q}.
  \end{equation}
  If $t\tilde{\psi}-q\tilde{f}$ is not cohomologous to a constant with respect to the transformation $T$, i.e., there is no function $u\in C(X)$ such that 
  \[t\tilde{\psi}-q\tilde{f}=u-u\circ T+C,\]
  where $C$ is a constant, then $P(t,q)$ is strictly convex and the matrix
  \[H(t,q):=
    \begin{bmatrix}
      \dfrac{\partial^2 p}{\partial t^2} & \dfrac{\partial^2 p}{\partial t\partial q}\\[8pt]
      \dfrac{\partial^2 p}{\partial t\partial q} & \dfrac{\partial^2 p}{\partial q^2}
    \end{bmatrix}
      \]is positive definite.

\end{theorem}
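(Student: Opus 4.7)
The plan is to establish the two conclusions separately, using (i) the variational principle for the topological pressure to derive the first-derivative formulas, and (ii) an asymptotic-variance identity combined with a Liv\v{s}ic-type dichotomy to obtain strict convexity.

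For the first-derivative formulas, the main input is the variational principle, which in conjunction with Theorem \ref{measure} asserts
\begin{equation*}
P(t,q) = \sup_{\mu} \Bigl\{h(\mu) + t\int \tilde\psi \, \diff\mu - q\int\tilde f \, \diff\mu\Bigr\},
\end{equation*}
with the supremum uniquely attained at $\mu = \tilde\mu_{t,q}$. Fix $(t_0, q_0)$ in the interior of $D$. Comparing the value at $(t, q_0)$ with that at $(t_0, q_0)$ in two ways --- substituting $\tilde\mu_{t_0,q_0}$ into the supremum at $(t, q_0)$, and conversely substituting $\tilde\mu_{t,q_0}$ into the supremum at $(t_0, q_0)$ --- produces, for $t > t_0$, the two-sided bound
\begin{equation*}
(t - t_0) \int \tilde\psi \, \diff\tilde\mu_{t_0,q_0} \;\le\; P(t,q_0) - P(t_0,q_0) \;\le\; (t-t_0) \int \tilde\psi \, \diff\tilde\mu_{t,q_0}.
\end{equation*}
Dividing by $t - t_0$ and letting $t \to t_0^+$, together with the weak-$*$ continuity of $(t,q) \mapsto \tilde\mu_{t,q}$ (a consequence of uniqueness of the Gibbs measure and the spectral gap of the Ruelle operator on a suitable H\"older space), gives $\partial P/\partial t = \int \tilde\psi \, \diff\tilde\mu_{t,q}$. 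A symmetric argument in the $q$ direction yields $\partial P/\partial q = -\int \tilde f \, \diff\tilde\mu_{t,q}$; the integrability hypothesis $\int(|\tilde\psi| + |\tilde f|) \, \diff\tilde\mu_{t,q} < \infty$ is precisely what makes these expressions finite and the variational characterization applicable.

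For strict convexity and positive definiteness of the Hessian, I would fix a direction $(a,b) \in \R^2 \setminus \{0\}$, set $g := a\tilde\psi - b\tilde f$, and consider the one-variable slice $s \mapsto P(t_0 + as, q_0 + bs)$. Since $P$ is real analytic on the interior of $D$ by Theorem \ref{core}, its second derivative at $s = 0$ exists and can be computed via perturbation theory for the leading eigenvalue of the Ruelle operator; the standard calculation, going back to Ruelle and Parry--Pollicott, identifies this second derivative with the asymptotic variance
\begin{equation*}
\sigma^2_{t_0,q_0}(g) = \lim_{n\to\infty}\frac{1}{n}\int \Bigl(S_n g - n\int g \, \diff\tilde\mu_{t_0,q_0}\Bigr)^2 \diff\tilde\mu_{t_0,q_0}.
\end{equation*}
Since the quadratic form $(a,b) \mapsto \sigma^2_{t_0,q_0}(a\tilde\psi - b\tilde f)$ equals $(a,b)\, H(t_0,q_0)\, (a,b)^{\top}$, positive definiteness of $H$ reduces to showing $\sigma^2_{t_0,q_0}(g) > 0$ for every nonzero $(a,b)$. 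A Liv\v{s}ic-type dichotomy asserts that $\sigma^2_{t_0,q_0}(g) = 0$ if and only if $g$ is cohomologous to a constant via a continuous transfer function, so the non-cohomology hypothesis (understood in every direction $(a,b)$) delivers the strict positivity and hence the strict convexity of $P$.

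The main obstacle is transporting the classical finite-alphabet Ruelle--Perron--Frobenius machinery to the present countable-alphabet setting: the alphabet is $\N$ and the potentials are unbounded, so the spectral gap on $C(X)$ generally fails and one must work on a suitable H\"older subspace. The strong H\"older condition \eqref{strong} together with the integrability assumption are precisely what permits one to establish quasi-compactness of the Ruelle operator on that space, which simultaneously underwrites the weak-$*$ continuity of $(t,q) \mapsto \tilde\mu_{t,q}$, the asymptotic-variance identity for the second derivative of the pressure, and the Liv\v{s}ic dichotomy invoked in the last step.
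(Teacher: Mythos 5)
The paper does not prove this theorem; it is quoted verbatim from Hanus--Mauldin--Urba\'nski \cite{16}, so there is no ``paper's own proof'' to compare against. What you have written is a reasonable high-level sketch of how such a result is established in thermodynamic formalism, but it differs in emphasis from the route actually taken in \cite{16}, and it leaves the genuinely hard step outsourced to a single sentence.

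For the first-derivative formulas, your two-sided convexity bound coming from the variational principle is a clean idea, but in the countable-alphabet setting of \cite{16} the variational principle itself is subtle: entropy can be infinite, and the supremum has to be taken only over invariant probability measures $\mu$ with $\int|\tilde\psi|\,\diff\mu<\infty$ (otherwise $h(\mu)+t\int\tilde\psi\,\diff\mu-q\int\tilde f\,\diff\mu$ is not even well defined). You invoke ``uniqueness of the maximizer'' and ``weak-$*$ continuity of $(t,q)\mapsto\tilde\mu_{t,q}$'' without indicating how to get either in this setting, and it is precisely these points that \cite{16} handles \emph{not} via the variational principle but by analytic perturbation of the dominant eigenvalue of the Ruelle operator $\mathcal{L}_{t\Psi-qF}$, using the spectral gap on a space of H\"older functions of bounded variation. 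The eigenvalue route gives differentiability, the derivative formula, and the continuity of the Gibbs measure in one package; in your route they all have to be proved separately. For the second part, the identification of the directional second derivative with the asymptotic variance and the Liv\v{s}ic-type dichotomy (variance zero iff the potential is cohomologous to a constant) is the right idea and does match the machinery of \cite{16}, but the dichotomy for unbounded potentials on a countable Markov shift is itself a nontrivial theorem, not something that ``follows'' once one has quasi-compactness. In short: the outline is structurally sound, but the load-bearing claims (restricted variational principle, weak-$*$ continuity of the Gibbs family, Liv\v{s}ic dichotomy for infinite alphabet) are each results requiring proof, and the actual route in \cite{16} is operator-theoretic rather than variational for the first-order part. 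Since the paper itself simply cites the theorem, none of this is a defect of the paper, but a self-contained write-up along your lines would need those three ingredients filled in.
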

\medskip
    Consider the level sets $$E_{\varphi}(\xi)=\{x\in(0,1]:\bar{\varphi}(x)=\xi\},\quad\xi\in\R.$$ Let $P(t,q)$ be the corresponding pressure function of the potential $t\Psi-qF$ and $D$ be the analytic area of $P(t,q)$. By \cite[Lemma 4.7]{A.H.Fan}, we know that if $\tilde{\psi}(\omega)=-\log\abs{T'(\pi(\omega))}$ and $\tilde{f}(\omega)=-\varphi(\pi(\omega))$. Then
    \begin{equation}
    -\int \log\abs{T'(x)}\diff \mu_{t,q}=\int  \tilde{\psi}\diff \tilde{\mu}_{t,q} \quad\text{ and }\quad \int \varphi(x)\diff \mu_{t,q}=-\int  \tilde{f}\diff \tilde{\mu}_{t,q}, 
    \end{equation}
    and $t\tilde{\psi}-q\tilde{f}$ is not cohomologous to a constant.
    \medskip

 Applying Theorem \ref{thm.6}, we arrive at the following proposition.
\begin{proposition}\label{prop.1}({\cite[Proposition 4.8]{A.H.Fan}})
  The following statements hold on $D$.
\begin{item}
(1) $P(t,q)$ is analytic and strictly convex.
\end{item}

\begin{item}
(2) $P(t,q)$ is strictly decreasing and strictly convex with respect to $t$, i.e., $(\partial p/\partial t) (t,q)<0$ and $(\partial^2 p/\partial t^2) (t,q) >0.$ Furthermore,
\begin{equation}
  \frac{\partial p}{\partial t}(t,q)=-\int \log \abs{T^{\prime}(x)} \diff \mu_{t,q}.
\end{equation}
\end{item}

\begin{item}
  (3) $P(t,q)$ is strictly increasing and strictly convex with respect to $q$, i.e., $(\partial p/\partial q) (t,q)>0$ and $(\partial^2 p/\partial q^2) (t,q) >0.$ Furthermore,
  \begin{equation}\label{equa.1}
    \frac{\partial p}{\partial q}(t,q)=\int  \varphi(x)\diff \mu_{t,q}.
  \end{equation}
  \end{item}

  \begin{item}
  (4) The matrix \[H(t,q):=
  \begin{bmatrix}
    \dfrac{\partial^2 p}{\partial t^2} & \dfrac{\partial^2 p}{\partial t\partial q}\\[8pt]
    \dfrac{\partial^2 p}{\partial t\partial q} & \dfrac{\partial^2 p}{\partial q^2}
  \end{bmatrix}
    \]is positive definite.
  \end{item}
 \end{proposition}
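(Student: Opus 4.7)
The plan is to deduce Proposition \ref{prop.1} as a direct specialization of Theorems \ref{core}, \ref{measure}, and \ref{thm.6} applied to the particular H\"older families $\Psi$ and $F$ associated with P2GLE, using the identifications $\tilde{\psi}(\omega) = -\log|T'(\pi(\omega))|$ and $\tilde{f}(\omega) = -\varphi(\pi(\omega))$ stated just above the proposition. Since $D$ is defined precisely as the region where the series $\sum_i \exp(\sup_x(t\psi_i(x) - qf_i(x)))$ converges, on any compact subset of $\mathrm{Int}(D)$ the family $t\Psi - qF$ is strong H\"older in the sense of \eqref{strong}, so the measures $\tilde{\mu}_{t,q}$ and $\mu_{t,q}$ supplied by Theorem \ref{measure} are available throughout the argument.

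For part (1), I would invoke Theorem \ref{core}: for the concrete P2GLE potentials treated later in the paper (Khintchine, Lyapunov, $\log d_1$, $2^{d_1}$), the sets $\{i : \sup_x \psi_i(x) > 0\}$ and $\{i : \sup_x f_i(x) > 0\}$ are finite, so $P(t,q)$ is real analytic on $\mathrm{Int}(D)$. Strict convexity will follow once the Hessian is shown positive definite in part (4). For parts (2) and (3), I would apply the derivative formulas \eqref{ergodic-reason} from Theorem \ref{thm.6}. With the sign convention above, $\partial P/\partial t = \int \tilde{\psi}\, \diff \tilde{\mu}_{t,q} = -\int \log|T'(x)|\, \diff \mu_{t,q}$, and since $|T'| = 2^{d_1} \geq 2$ this integral is strictly negative, so $\partial P/\partial t < 0$. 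Analogously, $\partial P/\partial q = -\int \tilde{f}\, \diff \tilde{\mu}_{t,q} = \int \varphi(x)\, \diff \mu_{t,q} > 0$ for the positive potentials considered. The integrability hypothesis $\int(|\tilde{\psi}| + |\tilde{f}|)\, \diff \tilde{\mu}_{t,q} < \infty$ must be verified on $\mathrm{Int}(D)$; this is where some care is needed, because it rests on exponential decay of the Gibbs weights, and for the unbounded potentials $\varphi = \log d_1$ and $\varphi = 2^{d_1}$ it requires the strict inequalities defining $\mathrm{Int}(D)$.

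For part (4), and for the strict convexity asserted in (2) and (3), I would appeal to the second half of Theorem \ref{thm.6}: once I know that $t\tilde{\psi} - q\tilde{f}$ is not cohomologous to a constant (which is exactly what is asserted just before the proposition for the P2GLE setting), the Hessian $H(t,q)$ is positive definite. Strict positivity of the diagonal entries $\partial^2 P/\partial t^2$ and $\partial^2 P/\partial q^2$ then gives the strict convexity in each variable separately, completing (2) and (3); positive definiteness of the whole matrix yields (4) and, together with analyticity, upgrades (1) to strict convexity of $P$ as a function of $(t,q)$.

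The main obstacle is checking the two background hypotheses on $\mathrm{Int}(D)$: the integrability condition $\int(|\tilde{\psi}| + |\tilde{f}|)\, \diff \tilde{\mu}_{t,q} < \infty$, and the non-cohomology statement. The integrability is essentially a Gibbs-bound computation using Theorem \ref{measure} combined with the explicit geometry $|\phi_\omega'| = 2^{-(\omega_1 + \cdots + \omega_n)}$ of the P2GLE IFS, so it reduces to showing that the tail sums $\sum_n d_n \cdot (\text{Gibbs weight of } I_n)$ converge on $\mathrm{Int}(D)$. The non-cohomology claim is standard for the Gauss-like setting and can be justified by noting that, were $t\tilde{\psi} - q\tilde{f}$ cohomologous to a constant, the Birkhoff sums on periodic orbits of $T$ would be rigidly determined, contradicting the independence of $\log|T'|$ and $\varphi$ along distinct cylinders in P2GLE.
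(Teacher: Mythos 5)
Your proposal is correct and follows essentially the same route as the paper, which offers no independent proof of this proposition: it simply cites \cite[Proposition 4.8]{A.H.Fan} and states that the result follows by "applying Theorem \ref{thm.6}" together with the identifications $\tilde{\psi}=-\log\abs{T'\circ\pi}$, $\tilde{f}=-\varphi\circ\pi$ and the non-cohomology assertion recorded just before the proposition. Your write-up is in fact more careful than the paper's, since you explicitly flag and sketch the two hypotheses (integrability of $\abs{\tilde{\psi}}+\abs{\tilde{f}}$ and non-cohomology to a constant) that the paper delegates entirely to the citations.
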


 \section{Khintchine spectrum}
 Recall that the transformation $T$ on $X=(0,1]$ associated to P2GLE is defined as
 \begin{equation*}
   Tx=2^nx-1 \quad\text{for } x\in(1/2^n,1/2^{n-1}] ,\quad n\in\N
 \end{equation*} and the corresponding conformal iterated function system on $X$ is
 $$S=\Big\{\phi_{i}(x)=2^{-i}(x+1):i\in\N\Big\}.$$ 
 Now we will calculate the Hausdorff dimensions of the level sets $$E(\xi)=\Big\{x\in(0,1]:\lim_{n\to\infty}\frac{1}{n}\sum_{j=1}^nd_j(x)=\xi\Big\}=\Big\{x\in(0,1]:\lim_{n\to\infty}\frac{1}{n}\sum_{j=0}^{n-1}\lceil-\log_2(T^{j}(x))\rceil=\xi\Big\},\quad \xi\geq 1.$$

 \medskip
 Let $\Psi=\{\log\abs{\phi'_i}:i\in\N\}=\{-i\log2:i\in\N\}$ and $F=\{-i:i\in\N\}$. It can be checked that $\Psi$ and $F$ are two families of real-valued H{\"o}lder functions. 
 \begin{proposition} Let $D:=\{(t,q):q-t\log2<0\}$. For any $(t,q)\in D,$ we have
   \begin{itemize}
     \item[(1)] the family $t\Psi-qF$ is H{\"o}lder and strong; 
     \item[(2)] the topological pressure $P$ of the potential $t\Psi-qF$ is
     \begin{equation}
       P(t,q)=\log\dfrac{e^{-t\log2+q}}{1-e^{-t\log2+q}}.
     \end{equation}
   \end{itemize}
   
 \end{proposition}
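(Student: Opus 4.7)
The proof is a direct computation once one observes that the maps $\phi_i(x)=2^{-i}(x+1)$ have constant derivative $\phi_i'(x)=2^{-i}$. Thus each $\psi_i=\log|\phi_i'|\equiv -i\log 2$ and $f_i\equiv -i$ is a \emph{constant} function on $X$, and consequently the amalgamated potential $(t\Psi-qF)^{(i)}(x)=i(q-t\log 2)$ does not depend on $x$.

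First I would dispatch (1). Since every $(t\psi_i-qf_i)$ is a constant, for each $n\geq 1$ and each $\omega\in\N^n$ the quantity
\[
\Bigl|(t\psi_{\omega_1}-qf_{\omega_1})(\phi_{\sigma\omega}(x))-(t\psi_{\omega_1}-qf_{\omega_1})(\phi_{\sigma\omega}(y))\Bigr|
\]
vanishes identically, so $V_n(t\Psi-qF)=0$ for every $n$, any $\beta>0$ works, and the family is Hölder trivially. For the strong property \eqref{strong}, we compute
\[
\sum_{i\in\N}\bigl\|e^{(t\psi_i-qf_i)}\bigr\|_\infty=\sum_{i\in\N}e^{i(q-t\log 2)},
\]
a geometric series which converges exactly when $e^{q-t\log 2}<1$, i.e.\ precisely on $D$.

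Next I would compute the pressure from definition \eqref{pressure}. Because $g^{(\omega_j)}\equiv \omega_j(q-t\log 2)$, the supremum over $x\in X$ inside \eqref{pressure} is vacuous and
\[
\sum_{|\omega|=n}\exp\Bigl(\sup_{x\in X}\sum_{j=1}^{n}g^{(\omega_j)}\circ \phi_{\sigma^j\omega}(x)\Bigr)
=\sum_{\omega\in\N^n}\exp\Bigl((q-t\log 2)\sum_{j=1}^{n}\omega_j\Bigr).
\]
The last sum factors as $\bigl(\sum_{i\in\N}e^{i(q-t\log 2)}\bigr)^{n}$, which is finite on $D$ by part (1). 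Taking $\tfrac{1}{n}\log$ and letting $n\to\infty$ gives
\[
P(t,q)=\log\sum_{i=1}^{\infty}e^{i(q-t\log 2)}=\log\frac{e^{q-t\log 2}}{1-e^{q-t\log 2}},
\]
which is the claimed formula. No serious obstacle arises; the only mildly delicate point is verifying that the domain of convergence matches the announced set $D$, and this drops out automatically from the geometric-series criterion above.
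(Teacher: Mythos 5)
Your proof is correct and follows essentially the same path as the paper: exploit the fact that each $\phi_i'$ is constant so that $(t\Psi-qF)^{(i)}$ is constant, whence $V_n\equiv 0$ gives the H\"older property, the geometric series $\sum_i e^{i(q-t\log2)}$ gives the strong property exactly on $D$, and the pressure sum factors as the $n$th power of that same geometric series. The only cosmetic difference is that you explicitly remark that the convergence criterion identifies $D$, which the paper leaves implicit.
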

 
 \begin{proof}
  (1) We first show that the family $t\Psi-qF$ is H{\"o}lder. Since $$(t\Psi-qF)^{(i)}:=t\log\abs{\phi'_i}+qi=i(-t\log2+q),$$
   we have $V_n(t\Psi-qF)=0.$ By definition, $t\Psi-qF$ is H{\"o}lder. 
 
   For $(t,q)\in D$, replacing $g^{(i)}$ by $i(-t\log2+q)$ in \eqref{strong}, we obtain that 
   $$\sum_{i\in\N}\norm{e^{i(-t\log2+q)}}_{\infty}<\infty.$$
   Thus $t\Psi-qF$ is strong.
 
   \medskip
   (2) It suffices to notice that $(t\Psi-qF)^{(i)}$ is a constant function for each $i\in\N$. Thus
   $$\sup_{x\in X}\sum_{j=1}^n(t\Psi-qF)^{(\omega_j)}\circ \phi_{\sigma^j\omega}(x)=(-t\log2+q)\sum_{j=1}^n\omega_j.$$
  By definition,
   \begin{align*}
     P(t,q)={}& \lim_{n\to\infty}\frac{1}{n}\log\sum_{\omega_1,\omega_2\cdots\omega_n\in\N}\exp((-t\log2+q)\sum_{j=1}^n\omega_j)\\
     ={}& \lim_{n\to\infty}\frac{1}{n}\log(\frac{e^{-t\log2+q}}{1-e^{-t\log2+q}})^n\\
     ={}& \log\dfrac{e^{-t\log2+q}}{1-e^{-t\log2+q}}.
   \end{align*}
 \end{proof}
 \subsection{Proof of \eqref{spec.1} in Theorem \ref{thm.1}  }
 Let us find the solution of the system \eqref{equa.pressure}.
 \begin{proposition}\label{q}
   For any $\xi\in(1,\infty)$, the unique solution of the  system 
   \begin{equation}
  \begin{cases}
 P(t,q)=q\xi,\\
 \dfrac{\partial P}{\partial q}(t,q)=\xi
   \end{cases}
  \end{equation}
   is  
   \[\begin{cases}\label{equa.3.1}
     t(\xi)=\dfrac{\frac{\log(\xi-1)}{\xi}+\log\xi-\log(\xi-1)}{\log2},\\
     q(\xi)=\dfrac{\log(\xi-1)}{\xi}.
   \end{cases}
     \]
 \end{proposition}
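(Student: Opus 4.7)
The plan is to reduce the system to a single variable via the substitution $u := e^{-t\log 2 + q}$. On the domain $D = \{q - t\log 2 < 0\}$ one has $u \in (0,1)$, and the closed-form pressure from the preceding proposition becomes $P(t,q) = \log u - \log(1-u)$. Since $\partial u/\partial q = u$, a direct calculation gives
\[
\frac{\partial P}{\partial q}(t,q) = 1 + \frac{u}{1-u} = \frac{1}{1-u}.
\]
Hence the second equation of the system is equivalent to $1/(1-u) = \xi$, i.e. $u = (\xi-1)/\xi$, and this value of $u$ by itself forces
\[
P(t,q) = \log\frac{u}{1-u} = \log(\xi-1).
\]

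With $P(t,q)$ now known, the first equation $P(t,q) = q\xi$ becomes $\log(\xi-1) = q\xi$, which immediately gives $q(\xi) = \log(\xi-1)/\xi$. Plugging this back into the defining relation $u = e^{-t\log 2 + q}$ together with $u = (\xi-1)/\xi$ yields $-t\log 2 + q = \log(\xi-1) - \log\xi$, from which the stated formula for $t(\xi)$ follows by a routine rearrangement.

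Uniqueness is built into this derivation: any solution must satisfy $u = (\xi-1)/\xi$ from the second equation, which then determines $q$ via the first and $t$ via the substitution. The only remaining point is admissibility, namely that $(t(\xi), q(\xi)) \in D$; but $q - t\log 2 = \log u = \log((\xi-1)/\xi) < 0$ holds for every $\xi > 1$, so the solution lies in the interior of $D$. I do not expect a genuine obstacle here: the explicit form of $P$ makes both $P$ and $\partial P/\partial q$ elementary functions of $u$ alone, so the two equations decouple and the computation is essentially algebraic. Strict convexity of $P$ from Proposition \ref{prop.1} provides an alternative, abstract uniqueness argument if one prefers, but is not needed in view of the explicit inversion.
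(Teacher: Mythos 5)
Your proof is correct and follows essentially the same route as the paper's: both start from the closed-form pressure, compute $\partial P/\partial q = 1/(1-e^{-t\log 2 + q})$, extract $e^{-t\log 2 + q} = (\xi-1)/\xi$ from the second equation, and back-substitute into the first to solve for $q(\xi)$ and then $t(\xi)$. The only additions in your write-up are the explicit substitution $u = e^{-t\log 2 + q}$ (a cosmetic convenience), the remark that uniqueness is automatic from the decoupling, and the check that $(t(\xi), q(\xi)) \in D$ — all of which the paper leaves implicit.
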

 
 \begin{proof}
   Since \begin{equation}\label{P(t,q)}
     P(t,q)=\log\dfrac{e^{-t\log2+q}}{1-e^{-t\log2+q}},
   \end{equation}
     we have
   \begin{equation}\label{partial}
     \frac{\partial P}{\partial q}(t,q)=\dfrac{1}{1-e^{-t\log2+q}}.
   \end{equation}
   Substituting \eqref{P(t,q)} and \eqref{partial} into the above system yields
  $$ e^{-t\log2+q}=\dfrac{e^{q\xi}}{1+e^{q\xi}}=\dfrac{\xi-1}{\xi}.$$
  Hence, $q(\xi)=\dfrac{\log(\xi-1)}{\xi}$ and $t(\xi)=\dfrac{\frac{\log(\xi-1)}{\xi}+\log\xi-\log(\xi-1)}{\log2} .$
   
 \end{proof}

\begin{lemma}
  For any $\xi>1$, let $\big(t(\xi),q(\xi)\big)$ be the solution of \eqref{equa.3.1}. Then the measure  $\mu_{t(\xi),q(\xi)}$ obtained by Theorem \ref{measure} is supported on $E(\xi)$, i.e., $\mu_{t(\xi),q(\xi)}(E(\xi))>0.$
\end{lemma}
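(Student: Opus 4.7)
The plan is to show the stronger statement $\mu_{t(\xi),q(\xi)}(E(\xi))=1$ by applying Birkhoff's ergodic theorem to the observable $d_1$. First I would verify that $(t(\xi),q(\xi))\in D=\{(t,q):q-t\log 2<0\}$, so that all the Ruelle-operator machinery applies at this point. From Proposition \ref{q} we know $e^{-t(\xi)\log 2+q(\xi)}=(\xi-1)/\xi<1$ for $\xi>1$, equivalently $q(\xi)-t(\xi)\log 2=\log\frac{\xi-1}{\xi}<0$, so indeed $(t(\xi),q(\xi))\in D$.

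Second, with $\varphi(x)=d_1(x)$ the amalgamated function $\tilde f(\omega)=-\omega_1=-\varphi(\pi(\omega))$ matches the setup $F=\{-i:i\in\N\}$, so Proposition \ref{prop.1}(3) gives
\[\frac{\partial P}{\partial q}(t(\xi),q(\xi))=\int d_1(x)\diff\mu_{t(\xi),q(\xi)}(x).\]
But by definition $(t(\xi),q(\xi))$ solves the system \eqref{equa.pressure}, hence the left-hand side equals $\xi$; in particular $d_1\in L^1(\mu_{t(\xi),q(\xi)})$ and $\int d_1\diff\mu_{t(\xi),q(\xi)}=\xi$.

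Third, Theorem \ref{measure} tells us $\tilde\mu_{t(\xi),q(\xi)}$ is shift-invariant and ergodic on $\N^\infty$. Since the coding map $\pi$ intertwines the shift $\sigma$ with $T$ and pushes $\tilde\mu_{t(\xi),q(\xi)}$ forward to $\mu_{t(\xi),q(\xi)}$, the latter is $T$-invariant and ergodic on $X$. Birkhoff's ergodic theorem applied to $d_1$ then yields, for $\mu_{t(\xi),q(\xi)}$-a.e. $x$,
\[\lim_{n\to\infty}\frac{1}{n}\sum_{j=0}^{n-1}d_1(T^jx)=\int d_1\diff\mu_{t(\xi),q(\xi)}=\xi,\]
so $\mu_{t(\xi),q(\xi)}(E(\xi))=1>0$.

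The only mildly delicate point is justifying the $L^1$ bound \emph{before} invoking Proposition \ref{prop.1}(3); however, this is immediate from the Gibbs property in Theorem \ref{measure}, which gives $\tilde\mu_{t(\xi),q(\xi)}([i])\leq C\exp\bigl(i(q(\xi)-t(\xi)\log 2)-P(t(\xi),q(\xi))\bigr)$, so $\sum_i i\,\tilde\mu_{t(\xi),q(\xi)}([i])<\infty$ because $q(\xi)-t(\xi)\log 2<0$ by the first step. There is thus no substantial obstacle: the lemma is essentially the ergodic theorem, read through the variational system that defines $(t(\xi),q(\xi))$.
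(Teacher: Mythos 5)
Your proof is correct and follows essentially the same route as the paper: identify $\int d_1\,\mathrm{d}\mu_{t(\xi),q(\xi)}$ with $\partial P/\partial q$ at $(t(\xi),q(\xi))$ via Proposition \ref{prop.1}(3), note that this equals $\xi$ by the defining system, and apply the ergodic theorem. The extra checks you include (that $(t(\xi),q(\xi))\in D$ and that $d_1\in L^1(\mu_{t(\xi),q(\xi)})$) are implicit in the paper's proof and are a welcome tightening, not a different argument.
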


\begin{proof}
  By \eqref{equa.1}, 
  $$\int \lceil-\log_2 x\rceil\,\mathrm{d}\mu_{t,q}= \dfrac{\partial
   P}{\partial q}(t,q).$$
  Take $(t,q)=\big(t(\xi),q(\xi)\big)$. By the ergodicity of $\mu_{t(\xi),q(\xi)}$ and the second equation in \eqref{equa.3.1}, we have that for $\mu_{t(\xi),q(\xi)}$-almost every $x$,
  \begin{equation*}
    \lim_{n\to\infty}\frac{1}{n}\sum_{j=0}^{n-1}\lceil-\log_2(T^{j}(x))\rceil=\int \lceil-\log_2 x\rceil\,\mathrm{d}\mu_{t(\xi),q(\xi)}=\dfrac{\partial P}{\partial q}\big(t(\xi),q(\xi)\big)=\xi.
  \end{equation*}
  Therefore, $\mu_{t(\xi),q(\xi)}(E(\xi))=1>0.$

\end{proof}
\medskip
The following lemma is a consequence of Theorem \ref{measure}. We use the notation $f\asymp g$ to mean that $\frac{1}{C}f(x)\leq g(x)\leq Cf(x)$ for some constant $C>0$ and for all $x\in X.$
\begin{lemma}
  For $x\in X$, let $I_n(x):=\{y\in X:d_i(y)=d_i(x), 1\leq i\leq n\}$ and $\omega=\pi^{-1}(x)$. Then 
  \begin{equation}\label{lemma}
    \mu_{t,q}(I_n(x))\asymp \exp(\sum_{j=1}^n(t\Psi-qF)^{(\omega_j)}(\pi(\sigma^j\omega))-nP(t,q)).
  \end{equation}
    
  \begin{proof}
    Notice that $\mu_{t,q}(I_n(x))=\tilde{\mu}_{t,q}([\omega|_n])$. Then $\mu_{t,q}$ is a Gibbs measure on $X$, i.e., there exists $C>0$, such that
  $$\frac{1}{C}\leq \dfrac{\mu_{t,q}(I_n(x))}{\exp(\sum_{j=1}^n(t\Psi-qF)^{(\omega_j)}(\pi(\sigma^j\omega))-nP(t,q))}\leq C.$$
  \end{proof}

\end{lemma}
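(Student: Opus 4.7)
The plan is to reduce the statement to the Gibbs property of $\tilde{\mu}_{t,q}$ on the symbolic space $\N^{\infty}$ provided by Theorem \ref{measure}, by translating the cylinder $I_n(x)\subset X$ into a symbolic cylinder $[\omega|_n]\subset\N^{\infty}$ via the coding map $\pi$.

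First, I would spell out the identification $\pi^{-1}(I_n(x)) = [\omega|_n]$ (up to a set of measure zero). By construction of $\pi$, if $\omega=\pi^{-1}(x)=(d_1(x),d_2(x),\dots)$, then for any $y\in I_n(x)$ the first $n$ digits of its P2GLE coincide with those of $x$, so $\pi^{-1}(y)$ begins with $\omega_1\cdots\omega_n$. Conversely, every $\eta\in[\omega|_n]$ codes some $y=\pi(\eta)\in X$ whose first $n$ digits agree with $x$'s (the open set condition ensures that the overlap between cylinders occurs only on a countable set of dyadic boundary points, which carries no $\mu_{t,q}$-mass since $\mu_{t,q}$ is equivalent to the conformal measure $\nu_{t,q}$ and the latter is non-atomic on such boundary points).

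Second, since $\mu_{t,q} = \tilde{\mu}_{t,q}\circ\pi^{-1}$ by definition, the previous identification yields
\[
\mu_{t,q}(I_n(x)) = \tilde{\mu}_{t,q}\big(\pi^{-1}(I_n(x))\big) = \tilde{\mu}_{t,q}([\omega|_n]).
\]
Third, I would apply the Gibbs property granted by Theorem \ref{measure}: there exists $C>0$, independent of $\omega$ and $n$, such that
\[
\frac{1}{C}\le \frac{\tilde{\mu}_{t,q}([\omega|_n])}{\exp\!\Big(\sum_{j=1}^n(t\Psi-qF)^{(\omega_j)}(\pi(\sigma^j\omega))-nP(t,q)\Big)}\le C,
\]
which, combined with the preceding equality, gives exactly \eqref{lemma}.

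The only non-routine point in this chain is the cylinder identification: one has to verify that the symbolic cylinder $[\omega|_n]$ indeed maps onto $I_n(x)$ up to a $\mu_{t,q}$-null set, which amounts to checking that the boundary overlaps among the images $\phi_i(\mathrm{Int}\,X)$ are negligible for $\mu_{t,q}$. This follows from the open set condition together with the equivalence of $\mu_{t,q}$ and the conformal measure $\nu_{t,q}$ established in Theorem \ref{measure}, so no further argument is required beyond invoking the framework set up in Section 2.
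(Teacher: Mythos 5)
Your proposal is correct and follows essentially the same route as the paper: identify $\mu_{t,q}(I_n(x))$ with $\tilde{\mu}_{t,q}([\omega|_n])$ via the coding map and then invoke the Gibbs property from Theorem \ref{measure}. The only difference is that you justify the cylinder identification (boundary overlaps being null) explicitly, which the paper simply asserts.
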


\begin{theorem}\label{them.dim}
      For any $\xi> 1$, we have 

 \begin{equation}\label{equa.Khintchine}
 \dim_HE(\xi)=t(\xi)=\dfrac{\frac{\log(\xi-1)}{\xi}+\log\xi-\log(\xi-1)}{\log2}. 
\end{equation}
\end{theorem}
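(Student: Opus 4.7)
The strategy is the classical two-sided thermodynamic argument: exhibit a Gibbs measure supported on $E(\xi)$ whose local (cylinder) dimension equals $t(\xi)$ to obtain the lower bound, and use a Legendre-tilted cover by cylinders to obtain the matching upper bound. The explicit formula for $P(t,q)$ makes every estimate transparent.

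For the lower bound I would take $\mu := \mu_{t(\xi),q(\xi)}$ from Theorem \ref{measure}, which by the preceding lemma satisfies $\mu(E(\xi)) = 1$. Fix $x \in E(\xi)$ with coding $\omega = \pi^{-1}(x)$. The cylinder length satisfies $|I_n(x)| \asymp 2^{-(\omega_1+\cdots+\omega_n)}$, while the Gibbs property \eqref{lemma} gives
\[
\log \mu(I_n(x)) \;=\; (-t(\xi)\log 2 + q(\xi))\sum_{j=1}^n \omega_j - n P(t(\xi),q(\xi)) + O(1).
\]
Since $\tfrac{1}{n}\sum_{j=1}^n \omega_j \to \xi$ on $E(\xi)$ and $P(t(\xi),q(\xi)) = q(\xi)\xi$ from the system \eqref{equa.3.1}, the ratio $\log \mu(I_n(x))/\log|I_n(x)|$ tends to $t(\xi)$. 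Billingsley's lemma for the Markov partition of cylinders then yields $\dim_H E(\xi) \geq t(\xi)$.

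For the upper bound, fix $s > t(\xi)$ and $\epsilon > 0$. Decompose $E(\xi) = \bigcup_{N} E_N(\epsilon)$ where $E_N(\epsilon) := \{x : |\sum_{j=1}^n d_j(x) - n\xi| < n\epsilon \text{ for all } n \geq N\}$. For each $n \geq N$, the cylinders $I_n(\omega)$ with $|\sum_j \omega_j - n\xi| < n\epsilon$ cover $E_N(\epsilon)$, and their diameters are at most $2^{-n}$. Multiplying by the tilt $e^{q(\xi)(\sum_j \omega_j - n\xi)} \in [e^{-|q(\xi)|n\epsilon}, e^{|q(\xi)|n\epsilon}]$ and then extending the sum to all of $\N^n$ gives
\[
\sum_{\omega}|I_n(\omega)|^s \;\leq\; \exp\bigl(n(-q(\xi)\xi + |q(\xi)|\epsilon + P(s,q(\xi)))\bigr).
\]
Proposition \ref{prop.1} ensures strict monotonicity of $P$ in $t$, so $P(s,q(\xi)) < P(t(\xi),q(\xi)) = q(\xi)\xi$; choosing $\epsilon$ small enough makes the exponent strictly negative. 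Sending $n \to \infty$ then shows $\mathcal{H}^s(E_N(\epsilon)) = 0$, hence $\mathcal{H}^s(E(\xi)) = 0$ by countable subadditivity, so $\dim_H E(\xi) \leq s$. Letting $s \downarrow t(\xi)$ completes the proof.

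The main subtlety is the coupled choice of $s$ and $\epsilon$ in the upper bound: the tilt $q(\xi)$ is determined by $\xi$ and may be positive, zero, or negative depending on whether $\xi > 2$, $= 2$, or $< 2$, so the estimate has to absorb an $O(n\epsilon)$ error into the gap $q(\xi)\xi - P(s,q(\xi))$. Fortunately the explicit closed form $P(s,q) = \log\bigl(e^{-s\log 2 + q}/(1 - e^{-s\log 2 + q})\bigr)$ reduces this to a one-variable continuity argument, so no delicate large-deviation machinery is needed beyond what Proposition \ref{prop.1} already provides.
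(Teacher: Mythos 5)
Your proposal is correct and its lower bound is exactly the paper's argument: compute the local cylinder dimension of the Gibbs measure $\mu_{t(\xi),q(\xi)}$ along $E(\xi)$ using the Gibbs property and the system \eqref{equa.pressure}, and invoke Billingsley's lemma. Where you diverge is the upper bound: the paper applies the two-sided Billingsley lemma in one stroke (since $d_\mu(x)=t(\xi)$ exactly for $x\in E(\xi)$, the same computation yields both $\geq$ and $\leq$), whereas you prove $\dim_H E(\xi)\leq t(\xi)$ directly via the Legendre-tilted cylinder cover $E(\xi)\subset\bigcup_N E_N(\epsilon)$ and the identity $\sum_{\omega\in\N^n}e^{(-s\log2+q)\sum\omega_j}=e^{nP(s,q)}$. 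Both routes are sound; yours is a bit longer but has the merit of exposing exactly where the strict monotonicity of $P$ in $t$ enters, and of not relying on the more technical ``upper'' half of Billingsley's lemma. It is worth noting that the paper in fact uses precisely your tilted-covering estimate for the boundary case $\xi=1$ (Proposition 3.5), so your argument is entirely within the paper's toolkit, merely applied to interior $\xi$ as well. One small inaccuracy: $|I_n(x)|=2^{-(d_1+\cdots+d_n)}$ is an exact equality, not merely $\asymp$, though nothing in your proof depends on this. You also correctly observe that $(s,q(\xi))$ stays in the analytic domain $D$ when $s>t(\xi)$, which is needed for $P(s,q(\xi))$ to be finite; this is implicit but worth spelling out.
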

\medskip
\begin{proof} 
  
  Let $(t,q)\in D$. Since $$(t\Psi-qF)^{(i)}=i(-t\log2+q),$$
we have
  $$\exp(\sum_{j=1}^n(t\Psi-qF)^{(\omega_j)}(\pi(\sigma^j\omega))-nP(t,q))=\exp((-t\log2+q)\sum_{j=1}^n\omega_j-nP(t,q)).$$
By Lemma \ref{lemma}, 
\begin{equation}\label{equa.point}
\mu_{t,q}(I_n(x))\asymp\exp((-t\log2+q)\sum_{j=1}^nd_j-nP(t,q)).
\end{equation}

\medskip
  Fix $\xi>1$. For any $x\in E(\xi)$, we have
  $$\lim_{n\to\infty}\frac{1}{n}\sum_{j=1}^nd_j(x)=\xi.$$
  Observe that $$\abs{I_n(x)}=2^{-d_1(x)+d_2(x)+\cdots d_n(x)}.$$
  Take $(t,q)=\big(t(\xi),q(\xi)\big)$. Then $P(t,q)=q\xi$, and
   \begin{align*}
    d_{\mu}(x)={} & \liminf_{n \to \infty}\dfrac{\log\mu_{t,q}(I_n(x))}{\log\abs{I_n(x)}}\\
    ={} &\liminf_{n \to \infty}\dfrac{(-t\log2+q)\sum_{j=1}^nd_j-nP(t,q)}{-(\sum_{j=1}^nd_j)\log2}\\
    ={}&\dfrac{-t\xi\log2+q\xi-q\xi}{-\xi\log2}\\
    ={}& t.
   \end{align*}
   By Billingsley's lemma (see \cite[Lemma 1.4.1]{Billingsley}), for $\xi>1$, we have $\dim_HE(\xi)=t(\xi)$.
\end{proof}
\medskip
  Summarizing, we have proved that the unique solution $t(\xi)$ of \eqref{equa.3.1} is the Khintchine spectrum for $\xi>1$. Now let us discuss the properties of the function $\xi\mapsto t(\xi)$.
\subsection{Proofs of Theorem \ref{thm.1} (1)-(4)}
Recall that the level set 
 \[E(2)=\{x\in(0,1]:\gamma(x)=2\}\] has Lebesgue measure one. This implies 
 $\dim_HE(2)=1.$ 
 \medskip

 By Theorem \ref{them.dim}, 
 \begin{equation*}\label{equa.3.2.1}
\dim_HE(\xi)=t(\xi)=\dfrac{\frac{\log(\xi-1)}{\xi}+\log\xi-\log(\xi-1)}{\log2},\quad\text{for all }\xi>1.
 \end{equation*}
Letting $\xi\to1$ and $\xi\to\infty$ respectively, we have
$$\lim_{\xi\to 1} t(\xi)=0,\qquad
\lim_{\xi\to \infty} t(\xi)=0.$$
This is assertion (2) of Theorem \ref{thm.1}.
\medskip

By differentiating the equation in \eqref{equa.Khintchine} with respect to $\xi$, we arrive at the following proposition.
\begin{proposition}\label{prop.derivative}
  For $\xi>1$,
  \begin{equation}\label{equa.see}
    t^{\prime}(\xi)=\dfrac{\log(\xi-1)}{-\xi^2\log2}.
  \end{equation}
\end{proposition}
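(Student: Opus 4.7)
The plan is to prove this proposition by direct differentiation of the explicit formula for $t(\xi)$ established in Theorem \ref{them.dim}, since the Khintchine spectrum is given in closed form. This removes the need for any implicit differentiation through the system \eqref{equa.pressure}.

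First, I would rewrite the identity \eqref{equa.Khintchine} as
\begin{equation*}
(\log 2)\, t(\xi) \;=\; \frac{\log(\xi-1)}{\xi} + \log\xi - \log(\xi-1),
\end{equation*}
so that the factor $\log 2$ becomes a mere constant that can be carried along on the left-hand side. Then I would differentiate each of the three summands on the right separately. The first term is handled by the quotient rule, giving $\frac{1}{\xi(\xi-1)} - \frac{\log(\xi-1)}{\xi^2}$. The derivatives of the remaining two terms are $\frac{1}{\xi}$ and $-\frac{1}{\xi-1}$ respectively.

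The key observation, which makes the proof short, is that the three contributions that do \emph{not} involve $\log(\xi-1)$ cancel:
\begin{equation*}
\frac{1}{\xi(\xi-1)} + \frac{1}{\xi} - \frac{1}{\xi-1}
= \frac{1 + (\xi-1) - \xi}{\xi(\xi-1)} = 0.
\end{equation*}
After this cancellation, the only surviving term is $-\frac{\log(\xi-1)}{\xi^2}$, and dividing by $\log 2$ yields the claimed formula $t'(\xi) = \frac{\log(\xi-1)}{-\xi^2 \log 2}$.

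There is no real obstacle here; the proposition is essentially a bookkeeping exercise. The only thing worth emphasizing in the write-up is the clean cancellation of the non-logarithmic terms, since that is what makes the derivative formula so simple and, in turn, will make the monotonicity statements in Theorem \ref{thm.1} (items (3) and (4)) and the analysis of the unique inflection point (item (5)) easy to read off: the sign of $t'(\xi)$ is governed entirely by the sign of $-\log(\xi-1)$, so $t'(\xi) > 0$ for $1 < \xi < 2$, $t'(2) = 0$, and $t'(\xi) < 0$ for $\xi > 2$, while $t'(\xi) \to \infty$ as $\xi \to 1^+$ and $t'(\xi) \to 0$ as $\xi \to \infty$.
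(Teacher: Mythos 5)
Your proposal is correct and is exactly the paper's approach: the paper obtains \eqref{equa.see} by directly differentiating the closed-form expression \eqref{equa.Khintchine}, which is what you do, just with the term-by-term bookkeeping and the cancellation of the non-logarithmic terms written out explicitly. The computation checks out, so nothing further is needed.
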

\medskip
  By \eqref{equa.see}, we see immediately that $t^{\prime}(\xi)>0$ for $\xi<2$, $t'(2)=0$, and $t'(\xi)<0$ for $\xi>2$. Furthermore,
  \begin{equation*}
    \lim_{\xi\to 1} t'(\xi)=\infty,\qquad
    \lim_{\xi\to \infty} t'(\xi)=0.
  \end{equation*}
We are now in a position to show the boundary case of Theorem \ref{thm.1}.
\begin{proposition}\label{thm.5}
  For $\xi=1$, $\dim_H E(1)=0$. 
\end{proposition}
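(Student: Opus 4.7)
The plan is to bound $\dim_H E(1)\leq t(\xi)$ for every $\xi>1$ by a direct covering argument, then let $\xi\downarrow 1$ and invoke $\lim_{\xi\to 1^+}t(\xi)=0$, which has just been established in the proof of assertion (2) of Theorem \ref{thm.1}.

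First I would note the chain of inclusions
\[
E(1)\subset\bigcup_{N\geq 1} B_N^{\xi},\qquad B_N^{\xi}:=\Big\{x\in(0,1]:\sum_{j=1}^n d_j(x)\leq\xi n\ \text{for all}\ n\geq N\Big\},
\]
valid for every $\xi>1$, since $\gamma(x)=1$ forces the partial averages to eventually drop below any $\xi>1$. For each $k\geq N$, the family of cylinders $\{I_k(d_1,\ldots,d_k):\sum_{i=1}^k d_i\leq\xi k\}$ then covers $B_N^{\xi}$, and $\abs{I_k(d_1,\ldots,d_k)}=2^{-(d_1+\cdots+d_k)}$.

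Using that the number of compositions of $m$ into $k$ positive parts is $\binom{m-1}{k-1}$, I would reduce the $s$-Hausdorff pre-measure estimate at scale $2^{-k}$ to controlling
\[
\sum_{m=k}^{\lfloor\xi k\rfloor}\binom{m-1}{k-1}2^{-sm}.
\]
A standard Stirling calculation, writing $m=\alpha k$, bounds each summand by $C\cdot e^{kg(\alpha)}$ where
\[
g(\alpha):=\alpha\log\alpha-(\alpha-1)\log(\alpha-1)-s\alpha\log 2.
\]
A short computation using the explicit formula for $t(\xi)$ from Proposition \ref{q} identifies $g(\xi)=\xi\log 2\cdot(t(\xi)-s)$, so $g(\xi)<0$ precisely when $s>t(\xi)$. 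Provided the maximum of $g$ on $[1,\xi]$ is attained at the right endpoint $\alpha=\xi$, the above sum is $O\!\left(k\cdot e^{kg(\xi)}\right)\to 0$ as $k\to\infty$. This yields $\mathcal{H}^s(B_N^{\xi})=0$ for every $N$, hence $\mathcal{H}^s(E(1))=0$, hence $\dim_H E(1)\leq s$. Letting $s\downarrow t(\xi)$ and then $\xi\downarrow 1$ gives $\dim_H E(1)\leq\lim_{\xi\to 1^+}t(\xi)=0$.

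The main obstacle is verifying the endpoint maximum. Since $g'(\alpha)=\log\alpha-\log(\alpha-1)-s\log 2$ is strictly decreasing in $\alpha$ and vanishes at $\alpha^{*}:=2^s/(2^s-1)$, one needs $\xi\leq\alpha^{*}$, equivalently $s\leq\log_2\!\frac{\xi}{\xi-1}$. Fortunately, as $\xi\downarrow 1$ we have $\log_2\!\frac{\xi}{\xi-1}\to+\infty$ while $t(\xi)\to 0$, so this condition is automatic in the only regime relevant for the limit. Aside from this observation, the remaining steps are routine Stirling bookkeeping.
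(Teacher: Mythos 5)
Your proposal is correct, but it follows a genuinely different route from the paper. The paper's proof is measure-theoretic: it fixes $\xi>1$ with $t(\xi)$ small, takes the Gibbs measure $\mu_{t,q(\xi)}$ furnished by the Ruelle operator theory (Theorem \ref{measure}), and uses the Gibbs estimate $\mu_{t,q}(I_n(x))\asymp e^{-nP(t,q)}e^{q\sum_j d_j}\lvert I_n(x)\rvert^{t}$ to convert the sum of $\lvert J\rvert^{t}$ over the covering cylinders into a multiple of $\sum_n e^{n(P(t,q(\xi))-(\xi+\varepsilon_0)q(\xi))}\sum_J\mu_{t,q(\xi)}(J)$, which converges by the choice of $\varepsilon_0$ (this is the same scheme reused later for Theorem \ref{them.similar}). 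You instead count the cylinders directly: the number of compositions $\binom{m-1}{k-1}$ plus Stirling gives the exponential rate $g(\alpha)$, and your identity $g(\xi)=\xi\log 2\,(t(\xi)-s)$ checks out against Proposition \ref{q}, as does the endpoint analysis via $g'(\alpha)=\log\frac{\alpha}{\alpha-1}-s\log 2$: since $t(\xi)-\log_2\frac{\xi}{\xi-1}=\frac{\log(\xi-1)}{\xi\log 2}<0$ for $1<\xi<2$, the window $s\in\bigl(t(\xi),\log_2\frac{\xi}{\xi-1}\bigr]$ is nonempty, so the maximum of $g$ on $[1,\xi]$ sits at $\alpha=\xi$ exactly where you need it. What each approach buys: yours is elementary and self-contained — it bypasses the thermodynamic formalism entirely and in effect re-derives $t(\xi)$ as a large-deviation rate, which is only possible here because the cylinder lengths depend on $\sum d_j$ alone; the paper's Gibbs-measure argument is less explicit but transfers verbatim to the potentials $\log d_1$ and $2^{d_1}$ of Sections 5 and 6, where no clean combinatorial count is available.
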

\begin{proof}
It is sufficient to show that \begin{equation}\label{equa.12.3}\dim_H E(1)\leq t\log2,\qquad \text{for all }t>0.\end{equation}
   For any $t>0$, from $\lim_{\xi\to1}t(\xi)=0$ and $q(\xi)=\dfrac{\log(\xi-1)}{\xi}$, it follows that there exists $\xi>1$ such that $0<t(\xi)<t$ and $q(\xi)<0$. Since $P(t,q)$ is strictly decreasing with respect to $t$, we can choose $\varepsilon_0>0$ such that
   $$\frac{P(t,q(\xi))-P\big(t(\xi),q(\xi)\big)}{q(\xi)}>\varepsilon_0.$$
   \medskip

   For $n\geq 1$, set
   $$E_0^n(\varepsilon_0):=\Big\{x\in[0,1):\frac{1}{n}\sum_{j=1}^n d_j(x)<\frac{1}{\log2}(\xi+\varepsilon_0)\Big\}.$$
  Since $\frac{1}{\log2}(\xi+\varepsilon_0)>1$, $$E(1)\subset \bigcup_{N=1}^{\infty}\bigcap_{n=N}^{\infty}E_0^n(\varepsilon_0).$$
  Let $\mathcal{I}(n,\xi,\varepsilon_0)$ denote the family of all $n$th-order cylinders $I_n(d_1,\cdots, d_n)$ such that 
  $$\frac{1}{n}\sum_{j=1}^n d_j(x)<\frac{1}{\log2}(\xi+\varepsilon_0).$$
  Then $$E_0^n(\varepsilon_0)=\bigcup_{J\in\mathcal{I}(n,\xi,\varepsilon_0)}J.$$
  Hence, $\{J:J\in\mathcal{I}(n,\xi,\varepsilon_0), n\geq1\}$ is a cover of $E(1)$. It remains to prove that 
  $$\sum_{n=1}^{\infty}\sum_{J\in\mathcal{I}(n,\xi,\varepsilon_0)}\abs{J}^{t\log2}<\infty.$$
  By  \eqref{equa.point} and the fact that $\abs{I_n(x)}=2^{-(d_1(x)+\cdots +d_n(x))}$, we have
  $$\mu_{t,q}(I_n(x))\asymp\exp(-nP(t,q))2^{q\sum_{j=1}^nd_j}\abs{I_n(x)}^{t\log2}.$$
  Thus,
\begin{align}
\sum_{n=1}^{\infty}\sum_{J\in\mathcal{I}(n,\xi,\varepsilon_0)}\abs{J}^{t\log2} & =\sum_{n=1}^{\infty}\sum_{\sum_{j=1}^nd_j<(\xi+\varepsilon_0)n\log_2 e}\frac{e^{nP(t,q(\xi))}}{2^{q(\xi)\sum_{j=1}^{\infty}d_j}}\frac{\abs{J}^{t\log2}2^{q(\xi)\sum_{j=1}^{\infty}d_j}}{e^{nP(t,q(\xi))}}\\
& \leq C \sum_{n=1}^{\infty}e^{nP(t,q(\xi))-(\xi+\varepsilon_0)q(\xi)}\cdot \sum_{J\in\mathcal{I}(n,\xi,\varepsilon_0)}\mu_{t,q(\xi)}(J)<\infty,
\end{align}
where $C$ is a constant. Hence, we obtain \eqref{equa.12.3}.
\end{proof}

\subsection{Proof of  Theorem \ref{thm.1} (5) }
The following proposition proves the last assertion of Theorem \ref{thm.1}, i.e., $t(\xi)$ has only one inflection point.
\begin{proposition}
  We have
  \begin{equation}
    t^{\prime\prime}(\xi)=\frac{2(\xi-1)\log(\xi-1)-\xi}{\xi^3(\xi-1)\log2},\quad \forall \xi>1.
  \end{equation}
  Further, there exists a point $\tilde{\xi}$ such that $t^{\prime\prime}(\tilde{\xi})=0$ and $t^{\prime\prime}(\xi) < 0$ for $\xi < \tilde{\xi}$, $t^{\prime\prime}(\xi) > 0$ for $\xi > \tilde{\xi}$. 
\end{proposition}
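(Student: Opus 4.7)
The plan is to compute $t''(\xi)$ by straight differentiation of the formula $t'(\xi) = -\frac{\log(\xi-1)}{\xi^2 \log 2}$ from Proposition \ref{prop.derivative}. Applying the quotient rule to $\log(\xi-1)/\xi^2$ yields
\[
\frac{d}{d\xi}\!\left(\frac{\log(\xi-1)}{\xi^2}\right) = \frac{\xi - 2(\xi-1)\log(\xi-1)}{\xi^3(\xi-1)},
\]
and multiplying by $-1/\log 2$ gives exactly the claimed expression for $t''(\xi)$. This part is a short mechanical computation.

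For the sign analysis I would set
\[
g(\xi) := 2(\xi-1)\log(\xi-1) - \xi, \qquad \xi > 1,
\]
so that the denominator $\xi^3(\xi-1)\log 2$ is strictly positive on $(1,\infty)$ and $\operatorname{sgn} t''(\xi) = \operatorname{sgn} g(\xi)$. Thus it suffices to show that $g$ has exactly one zero on $(1,\infty)$, is negative to the left of it and positive to the right.

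To do this I would study the monotonicity of $g$. A direct differentiation gives $g'(\xi) = 2\log(\xi-1) + 1$, which vanishes uniquely at $\xi^{\ast} = 1 + e^{-1/2}$, is negative on $(1,\xi^{\ast})$ and positive on $(\xi^{\ast},\infty)$. So $g$ decreases on $(1,\xi^{\ast})$ and increases on $(\xi^{\ast},\infty)$. Using $\lim_{u\to 0^+} u\log u = 0$ we get $\lim_{\xi\to 1^+} g(\xi) = -1$, while $g(\xi^{\ast}) = -1 - 2e^{-1/2} < 0$ and $\lim_{\xi\to\infty} g(\xi) = +\infty$. Therefore $g<0$ on $(1,\xi^{\ast}]$, and on $(\xi^{\ast},\infty)$ the function $g$ is strictly increasing from a negative value to $+\infty$, so by the intermediate value theorem it has a unique zero $\tilde{\xi} \in (\xi^{\ast},\infty)$, with $g(\xi)<0$ for $\xi<\tilde{\xi}$ and $g(\xi)>0$ for $\xi>\tilde{\xi}$. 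Translating back via $\operatorname{sgn} t''(\xi) = \operatorname{sgn} g(\xi)$ proves the claim.

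There is no real obstacle here; this is calculus on an explicit one-variable function, and the proof owes its cleanness entirely to the closed-form expression for $P(t,q)$ derived earlier (the point emphasised in the remark after Theorem \ref{thm.1}). The only mild care needed is the boundary limit at $\xi \to 1^+$, where one must use $(\xi-1)\log(\xi-1)\to 0$ rather than naively substituting $\log 0 = -\infty$.
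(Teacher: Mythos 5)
Your proposal is correct and follows essentially the same route as the paper: differentiate the closed-form $t'(\xi)$, reduce the sign of $t''$ to the sign of $g(\xi)=2(\xi-1)\log(\xi-1)-\xi$, and analyze $g$ via its derivative $g'(\xi)=2\log(\xi-1)+1$, its limit $-1$ at $\xi\to 1^+$, and its divergence to $+\infty$. The only cosmetic difference is that the paper additionally checks $g(3)<0$ and $g(1+e)>0$ to localize $\tilde\xi\in(3,1+e)$, while you compute $g(1+e^{-1/2})=-1-2e^{-1/2}<0$ explicitly before invoking the intermediate value theorem on $(\xi^\ast,\infty)$.
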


\begin{proof} 
  For $\xi>1$, we have $\xi^3(\xi-1)\log2 > 0$. Define a function $f$ by $f(\xi)=2(\xi-1)\log(\xi-1)-\xi$ for all $\xi>1$. Notice that $$\lim_{\xi\to 1} f(\xi)=-1<0, \qquad \lim_{\xi\to\infty}f(\xi)=\infty.$$ Taking the derivative of $f$ gives
  $f^{\prime}(\xi)=2\log(\xi-1)+1.$
  We then deduce that $f$ is strictly decreasing on $(1,1+e^{-1/2})$ and increasing on $(1+e^{-1/2},\infty)$. Observe that 
$f(3)<0,$ and $f(1+e)>0$. Therefore, there exists exactly one point $\tilde{\xi}\in(3,1+e)$ such that $t^{\prime\prime}(\tilde{\xi})=0.$ The rest of the statements are evident.
  
\end{proof}

\section{Birkhoff spectrum for $\varphi: x\mapsto\log(d_1(x))$}
Let $\varphi(x)=\log(d_1(x))=\log(\lceil-\log_2 x\rceil)$. We consider the Hausdorff dimensions of the sets
$$E_{\varphi}(\xi)=\Big\{x\in(0,1]:\lim_{n\to\infty}\frac{1}{n}\sum_{j=0}^{n-1}\log(d_1(T^jx))=\xi\Big\},\quad \xi\geq 0.$$  Recall that, for Lebesgue almost all \x,
$$\lim_{n\to\infty}\frac{1}{n}\sum_{j=0}^{n-1}\log(d_1(T^jx))=\xi_0=\int_0^1\log\lceil-\log_2 x\rceil \,\mathrm{d}x.$$
Thus $\dim_HE_{\varphi}(\xi_0)=1$.

\medskip
Let $\Psi=\{\log\abs{\phi'_i}:i\in\N\}=\{-i\log2:i\in\N\}$ and $F=\{-\log i:i\in\N\}$. 
\begin{proposition}\label{press} Let $D:=\{(t,q):t>0,\text{ or } t=0, q<-1\}$. For any $(t,q)\in D:$
  \begin{item}
  (1) the family $t\Psi-qF$ is H{\"o}lder and strong; 
  \end{item}

  \begin{item}
    (2) the topological pressure $P(t,q)$ of the potential $t\Psi-qF$ is given by
    \begin{equation}\label{equa.5.2}
      P(t,q)= \log(\sum_{n=1}^{\infty}2^{-n t}\cdot n^q).
    \end{equation}
  \end{item}
\end{proposition}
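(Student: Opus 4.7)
The plan is to prove Proposition \ref{press} by mimicking the structure of the analogous proposition in Section 3 (the Khintchine case), exploiting the crucial observation that each component $(t\Psi-qF)^{(i)}$ is a \emph{constant function} of $x\in X$. Indeed,
\[
(t\Psi-qF)^{(i)}(x) \;=\; t\log|\phi'_i(x)| + q\log i \;=\; -ti\log 2 + q\log i,
\]
independent of $x$. This is exactly what makes both assertions essentially a calculation.

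For part (1), I would first establish the H\"older property by plugging into the definition of $V_n(t\Psi-qF)$. Because each $(t\Psi-qF)^{(\omega_1)}$ is constant on $X$, the inner difference
$\bigl|(t\Psi-qF)^{(\omega_1)}(\phi_{\sigma\omega}(x)) - (t\Psi-qF)^{(\omega_1)}(\phi_{\sigma\omega}(y))\bigr|$
vanishes identically, so $V_n(t\Psi-qF)=0$ for every $n$, and hence $V_\beta(t\Psi-qF)=0<\infty$ for any $\beta>0$. For the strong property, the condition \eqref{strong} becomes
\[
\sum_{i\in\N}\bigl\|e^{(t\Psi-qF)^{(i)}}\bigr\|_\infty \;=\; \sum_{i=1}^{\infty} 2^{-ti}\, i^{q}.
\]
Here I would analyze convergence of this series case-by-case to show it matches exactly the region $D$. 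If $t>0$, the factor $2^{-ti}$ decays exponentially and dominates any polynomial $i^q$, so the series converges for all $q\in\R$. If $t=0$, the series reduces to $\sum_i i^q$, a $p$-series which converges iff $q<-1$. If $t<0$, the series diverges. These cases together identify $D$ as claimed.

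For part (2), I would substitute into the definition \eqref{pressure}. Because $(t\Psi-qF)^{(\omega_j)}$ is a constant, the internal supremum over $x\in X$ is superfluous, giving
\[
\sup_{x\in X}\sum_{j=1}^n (t\Psi-qF)^{(\omega_j)}\!\circ\phi_{\sigma^j\omega}(x) \;=\; \sum_{j=1}^n\bigl(-t\omega_j\log 2 + q\log \omega_j\bigr).
\]
Exponentiating and summing over $\omega\in\N^n$ factorizes completely:
\[
\sum_{\omega\in\N^n}\prod_{j=1}^n 2^{-t\omega_j}\,\omega_j^{\,q}
\;=\;\Bigl(\sum_{n=1}^{\infty} 2^{-tn}\, n^{q}\Bigr)^{\!n}.
\]
Taking $\tfrac{1}{n}\log$ and passing to the limit yields the asserted closed form $P(t,q)=\log\bigl(\sum_{n\ge 1}2^{-nt}n^q\bigr)$, which is finite precisely on $D$ by the analysis in part (1).

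There is no real obstacle here; the whole proof is formal once one notes that the potentials are constant in the space variable, trivializing both the H\"older seminorm and the internal supremum in the pressure. The only point requiring a little care is the boundary description of $D$, specifically the case $t=0$, $q<-1$, which has to be handled by the $p$-series criterion rather than by exponential decay.
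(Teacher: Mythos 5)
Your proposal is correct and follows essentially the same route as the paper: both exploit that each $(t\Psi-qF)^{(i)}$ is constant in $x$, so the H\"older seminorm vanishes, the strong condition reduces to convergence of $\sum_i 2^{-ti}i^q$ on $D$, and the pressure sum factorizes into an $n$th power. Your case-by-case justification of convergence on $D$ merely fills in a step the paper dismisses with ``it can be checked.''
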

\begin{proof}
  (1) Since $(t\Psi-qF)^{(i)}=-ti\log2+q\log i,$ we have
   $V_n(t\Psi-qF)=0.$ By definition, the family $t\Psi-qF$ is H{\"o}lder. 
 
   \medskip
   For $(t,q)\in D$, it can be checked that 
   $$\sum_{i\in\N}e^{-ti\log2+q\log i}<\infty.$$
   Thus $t\Psi-qF$ is strong.
   \medskip

   (2) Observe that
   $$\sup_{x\in X}\sum_{j=1}^n(t\Psi-qF)^{(\omega_j)}\circ \phi_{\sigma^j\omega}(x)=-t\log2\sum_{j=1}^n\omega_j+q\sum_{j=1}^n\log\omega_j.$$
  By definition,
   \begin{align*}
     P(t,q)
     ={}& \lim_{n\to\infty}\frac{1}{n}\log\sum_{\omega_1,\omega_2\cdots\omega_n\in\N}\exp(-t(\log2)\sum_{j=1}^n\omega_j+q\sum_{j=1}^n\log\omega_j)\\
     ={}& \lim_{n\to\infty}\frac{1}{n}\log(\sum_{\omega=1}^{\infty}\exp(-t\omega\log2+q\log\omega))^n\\
     ={}& \log(\sum_{n=1}^{\infty}2^{-nt}\cdot n^q).
   \end{align*}
 \end{proof}

By Theorem \ref{core} and Proposition \ref{press}, we know that $D$ is the analytic area of the pressure $P(t,q).$ From Mayer \cite{Mayer} (see also Pollicott and Weiss \cite[Proposition 4]{pollicott}), we know that $\mu_{1,0}$ is the Lebesgue measure on $[0,1]$. By \eqref{equa.1}, we obtain

\begin{equation}
  \frac{\partial P}{\partial q}(1,0)=\int \log\lceil-\log_2 x\rceil \,\mathrm{d}x=\xi_0.
\end{equation}

\subsection{Further sutdy on $P(t,q)$}

 Now we show some properties of $P(t,q)$ and $\partial P/\partial q$.
  \begin{proposition}\label{prop.0} For fixed $t>0$, 
    \begin{equation}\label{equa.5.8}
      \frac{\partial P}{\partial q}(t,q)=\frac{\sum_{n=1}^{\infty}2^{-nt}(\log n)n^q}{\sum_{n=1}^{\infty}2^{-nt}n^q},\quad \forall q\in\R.
    \end{equation}
    Moreover, we have
    \begin{equation}\label{equa.5.5}
      P(t,0)=-\log(2^t-1),
    \end{equation}
    and 
    \begin{equation}\label{equa.5.6}
      P(0,q)=\log\zeta(-q),
    \end{equation}
    where $\zeta$
    is the Riemann zeta function, defined by 
    $$\zeta(s):=\sum_{n=1}^{\infty}\frac{1}{n^s},\quad \forall s>1.$$
    Consequently:
    \begin{item}
     (1) $P(1,0)=0,$ and 
     $$\lim_{q\to-\infty}P(0,q)=0;$$
    \end{item}

    \begin{item}
      (2) for fixed $ t>0$, $$\lim_{q\to \infty}P(t,q)=\infty;$$
    \end{item}

    \begin{item}
     (3) for fixed $t\geq0$, 
      \begin{equation}\label{equa.5.4}
        \lim_{q\to-\infty}\frac{\partial P}{\partial q}(t,q)=0;
      \end{equation}
    \end{item}
    \begin{item}
      (4) we have 
      \begin{equation}\label{equa.5.7}
        \lim_{t\to0}\frac{\partial P}{\partial q}(t,0)=\infty.
      \end{equation}
    \end{item}
  \end{proposition}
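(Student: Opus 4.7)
The strategy is to manipulate the absolutely convergent series in \eqref{equa.5.2} term-by-term, from which all the identities and limits fall out. By Proposition \ref{press} and Theorem \ref{core}, $P$ is real analytic on $D$, so I may differentiate $P(t,q)=\log\sum_{n\geq 1}2^{-nt}n^q$ in $q$ under the summation sign; the termwise derivative is $2^{-nt}(\log n)n^q$, and dividing by the series itself gives \eqref{equa.5.8}. Specializing $q=0$ collapses the series to the geometric sum $\sum_n 2^{-nt}=1/(2^t-1)$, giving \eqref{equa.5.5}. Specializing $t=0$ (with $q<-1$, which lies in $D$) gives $\sum_n n^q=\zeta(-q)$, whence \eqref{equa.5.6}.

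Assertion (1) is then immediate: $P(1,0)=-\log(2-1)=0$, and since $\zeta(s)\to 1$ as $s\to\infty$ we have $\lim_{q\to-\infty}P(0,q)=\log 1=0$. For (2), fix $t>0$ and retain only the $n=2$ term of the series to get the lower bound $P(t,q)\geq \log(2^{-2t}\cdot 2^q)$, which tends to $\infty$ as $q\to\infty$.

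For (3), the $n=1$ term of the numerator in \eqref{equa.5.8} vanishes because $\log 1=0$, while the denominator is bounded below by $2^{-t}$ (from its $n=1$ term) when $t>0$, and by $1$ when $t=0$. It therefore suffices to show $\sum_{n\geq 2}2^{-nt}(\log n)n^q\to 0$ as $q\to-\infty$. Fixing any $q_0<-1$ provides the summable envelope $2^{-nt}(\log n)n^{q_0}$ (valid for all $t\geq 0$), and $n^q\leq n^{q_0}$ whenever $q\leq q_0$; dominated convergence then yields the limit.

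The main obstacle is (4), since at $q=0$ both series in \eqref{equa.5.8} diverge as $t\to 0^+$ and the quotient is $\infty/\infty$. The plan is to use \eqref{equa.5.5} to rewrite
\[
\frac{\partial P}{\partial q}(t,0)=(2^t-1)\sum_{n\geq 1}(\log n)\,2^{-nt},
\]
and then truncate the sum from $N=\lfloor 1/t\rfloor$ onward, obtaining the lower bound
\[
(2^t-1)(\log N)\sum_{n\geq N}2^{-nt}=2^t\cdot 2^{-Nt}\log N\geq \tfrac{1}{2}\log\lfloor 1/t\rfloor,
\]
where $Nt\leq 1$ forces $2^{-Nt}\geq 1/2$. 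Letting $t\to 0^+$ sends $\log\lfloor 1/t\rfloor\to\infty$, which establishes \eqref{equa.5.7}.
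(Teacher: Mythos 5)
Your proof matches the paper's for \eqref{equa.5.5}, \eqref{equa.5.6}, and items (1) and (3), so I will focus on two points.

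The justification you give for \eqref{equa.5.8} does not quite hold up as written. Analyticity of $P$ (from Theorem \ref{core}) guarantees that $\partial P/\partial q$ exists, but by itself it does not license interchanging $\partial/\partial q$ with the infinite sum: knowing the limit function is smooth says nothing about whether the termwise derivative of a particular series representation converges to that derivative. What is actually needed is locally uniform convergence of the differentiated series $\sum_n 2^{-nt}(\log n)n^q$, which is precisely what the paper supplies — for fixed $t>0$ and any $q_0$, one has $2^{-nt}(\log n)n^q\le 2^{-nt}(\log n)n^{q_0}$ for all $q\le q_0$, and $\sum_n 2^{-nt}(\log n)n^{q_0}<\infty$, giving uniform convergence on $\{q\le q_0\}$. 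You invoke exactly this Weierstrass-type domination in your treatment of item (3), so the fix is simply to cite it at the outset in place of Theorem \ref{core}.

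Your argument for item (4) is genuinely different from the paper's, and tighter. You rewrite $\frac{\partial P}{\partial q}(t,0)=(2^t-1)\sum_{n\ge 1}(\log n)2^{-nt}$ using \eqref{equa.5.5} and truncate at $N=\lfloor 1/t\rfloor$; the tail geometric sum collapses exactly, giving $2^t\, 2^{-Nt}\log N\ge\tfrac12\log\lfloor 1/t\rfloor$, which diverges as $t\to 0^+$ in a single pass with an explicit rate. The paper instead fixes an arbitrary $M$, picks $N$ with $\log n>M$ for $n\ge N$, shows $\liminf_{t\to 0}\frac{\partial P}{\partial q}(t,0)\ge M$, and then lets $M\to\infty$. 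Both are correct; yours is shorter, quantitative, and dispenses with the double limit. Similarly, for item (2) your single-term lower bound $P(t,q)\ge\log(2^{-2t}2^q)$ is a crisper way to force divergence than the paper's remark that $n^q\to\infty$ for each $n>1$.
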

  \begin{proof}
    Fix $t>0$ and $q_0>0$. Notice that for any $n\in\N $ and $q\leq q_0$, 
    $$2^{-nt}(\log n)n^q\leq2^{-nt}(\log n)n^{q_0}.$$  Since the series $\sum_{n=1}^{\infty}2^{-nt}(\log n)n^{q_0}$ is convergent, we arrive at \eqref{equa.5.8}.
   Since $$P(t,0)=\log(\sum_{n=1}^{\infty}2^{-n t})=-\log(2^t-1),$$ we have $P(1,0)=0$.
    Notice that
     $$P(0,q)=\log(\sum_{n=1}^{\infty}n^q)=\log\zeta(-q),$$ 
and $$\lim_{q\to-\infty}\zeta(-q)=1.$$ 
Then, we have $P(0,q)\to0$  $(q\to-\infty)$. We have thus proved \eqref{equa.5.5}, \eqref{equa.5.6} and part (1).
\medskip

Next, let $q\to\infty$. We note that $n^q\to\infty$ for all $n>1$, hence $P(t,q)\to\infty.$ This proves part (2).


\medskip

Now, let $q\to-\infty$. Since $n^q\to0$, we have 
\[\lim_{q\to-\infty}\frac{\partial P}{\partial q}(t,q)=\lim_{q\to-\infty}\frac{\sum_{n=1}^{\infty}2^{-nt}(\log n)n^q}{2^{-t}+\sum_{n=2}^{\infty}2^{-nt}n^q}=0,\]
which is the assertion of part (3).

\medskip
For the final part, let $q=0$.
By \eqref{equa.5.8}, we have
\begin{equation*}
  \frac{\partial P}{\partial q}(t,0)=\frac{\sum_{n=1}^{\infty}2^{-nt}(\log n)}{\sum_{n=1}^{\infty}2^{-nt}}.
\end{equation*} 
Fix $M>0$. Then there extsts $N\in\N$, such that $\log n>M$ for any $n\geq N$. Hence, $$\frac{\partial P}{\partial q}(t,0)>\frac{(\log2)\sum_{n=2}^{k-1}2^{-nt}+M\sum_{n=k}^{\infty}2^{-nt}}{\sum_{n=1}^{\infty}2^{-nt}}=2^{-2t}(\log2)(1-2^{-(k-2)t})+M2^{-kt}.
$$
Notice that
\[\lim_{t\to 0}2^{-2t}(\log2)(1-2^{-(k-2)t})+M2^{-kt}=M.
  \]
Thus,
 \[\lim_{t\to0}\frac{\partial P}{\partial q}(t,0)\geq M.\]
 Hence, letting $M\to\infty$, we obtain \eqref{equa.5.7}.
  \end{proof}
  \medskip
  Recall that $\xi_0=(\partial P/\partial q)(1,0)$. Let $\tilde{D}=\{(t,q)\in D:0\leq t\leq 1\}$.
  \begin{proposition}[\cite{A.H.Fan}]\label{4.11}
    For any $\xi\in(0,\infty)$, the system 
    \begin{equation}\label{equa.5.1}
      \begin{cases}
       P(t,q)=q\xi,\\
       \dfrac{\partial P}{\partial q}(t,q)=\xi
         \end{cases}
       \end{equation}
     admits a unique solution $\big(t(\xi),q(\xi)\big)\in\tilde{D}$. For $\xi=\xi_0$, the unique solution is $(t(\xi_0),q(\xi_0))=(1,0)$. Further, the functions $t(\xi)$ and $q(\xi)$ are analytic.
   \end{proposition}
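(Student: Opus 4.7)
My strategy is to reduce the two-equation system \eqref{equa.5.1} to a one-dimensional problem by exploiting the strict convexity of $P$ in $q$. By Proposition \ref{prop.1}(3), for each $t\in[0,1]$ (with $q$ ranging over $\R$ if $t>0$ and over $(-\infty,-1)$ if $t=0$), the map $q\mapsto \partial P/\partial q(t,q)$ is strictly increasing. Combined with Proposition \ref{prop.0}(3) (giving limit $0$ as $q\to-\infty$), the fact that $\partial P/\partial q(t,q)\to\infty$ as $q\to +\infty$ for $t>0$ (readily extracted from the explicit series formula), and the fact that $\zeta$ has a simple pole at $s=1$ which forces $\partial P/\partial q(0,q)\to\infty$ as $q\to -1^-$, this derivative is a bijection onto $(0,\infty)$. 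Hence for each $\xi\in(0,\infty)$ and each $t\in[0,1]$ there is a unique $q^*(t)$ with $\partial P/\partial q(t,q^*(t))=\xi$. I would then set $m(t):=P(t,q^*(t))-\xi q^*(t)$, so that the system \eqref{equa.5.1} is equivalent to $m(t)=0$ together with $q(\xi)=q^*(t(\xi))$. The envelope theorem yields $m'(t)=\partial P/\partial t(t,q^*(t))<0$ by Proposition \ref{prop.1}(2), so $m$ is strictly decreasing wherever defined.

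The next step is to locate the zero of $m$ by evaluating at the endpoints. At $t=1$: since $P(1,0)=0$ and $\partial P/\partial q(1,0)=\xi_0$, one has $m(1)\leq P(1,0)-\xi\cdot 0=0$; for $\xi\neq \xi_0$ the derivative of $q\mapsto P(1,q)-\xi q$ at $q=0$ equals $\xi_0-\xi\neq 0$, so $q=0$ is not the minimizer and strict convexity forces $m(1)<0$, while for $\xi=\xi_0$ the minimizer is $q^*(1)=0$ and $m(1)=0$. At $t=0$: since $q^*(0)<-1<0$, both $P(0,q^*(0))=\log\zeta(-q^*(0))>0$ (as $\zeta(s)>1$ for $s>1$) and $-\xi q^*(0)>0$, so $m(0)>0$. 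The intermediate value theorem combined with strict monotonicity then produces a unique $t(\xi)\in[0,1]$ with $m(t(\xi))=0$, and the $\xi=\xi_0$ case directly reads off $(t(\xi_0),q(\xi_0))=(1,0)$. Analyticity follows from the implicit function theorem applied to $(t,q)\mapsto (P(t,q)-q\xi,\,\partial P/\partial q(t,q)-\xi)$: at the solution the off-diagonal entry $\partial P/\partial q-\xi$ vanishes, so the Jacobian determinant reduces to $(\partial P/\partial t)(\partial^2 P/\partial q^2)\neq 0$ by Proposition \ref{prop.1}(2)--(3).

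The principal obstacle I foresee is the continuity of $m$ at $t=0$, needed so the intermediate value theorem applies on the closed interval $[0,1]$. This is genuinely nontrivial because the admissible range of $q$ jumps from $\R$ to $(-\infty,-1)$ as $t$ crosses $0$. To show $q^*(t)\to q^*(0)$ as $t\to 0^+$, I would rule out three failure modes for any subsequential limit $q^{**}$ of $q^*(t)$: escape to $-\infty$ (which by Proposition \ref{prop.0}(3) would force $\partial P/\partial q\to 0\neq \xi$), accumulation in the forbidden region $q\geq -1$ (where $\sum_n n^q=\infty$, forcing the log-average $\partial P/\partial q$ to blow up, contradicting $\partial P/\partial q=\xi$), and convergence to an interior point $q^{**}\neq q^*(0)$ (contradicting uniqueness established in the first paragraph). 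Once this continuity is settled, the remainder is a clean convex-analytic exercise.
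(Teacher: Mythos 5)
The paper does not prove this proposition: it is stated with the citation \cite{A.H.Fan} and no argument is supplied (the original proof is in Fan--Liao--Wang--Wu, Prop.~4.11 there). So there is no in-paper argument to compare against, and your proposal has to stand on its own.

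On its own merits, the proposal is essentially correct and well organised. The reduction to a one-variable problem by solving $\partial P/\partial q(t,q)=\xi$ for $q=q^*(t)$, the envelope-theorem identity $m'(t)=\partial P/\partial t(t,q^*(t))<0$, the endpoint evaluations $m(0)>0$ and $m(1)\le 0$ with equality iff $\xi=\xi_0$, and the implicit-function-theorem computation of the Jacobian at the solution (where the off-diagonal term vanishes, leaving $(\partial P/\partial t)(\partial^2 P/\partial q^2)\ne 0$) are all sound, and you rightly identify the one genuinely delicate point, namely continuity of $m$ as $t\to 0^+$. A few places would need to be tightened in a final write-up. (i) You invoke Proposition \ref{prop.0}(3) to rule out $q^*(t_k)\to-\infty$, but that proposition is for fixed $t$; what you actually need is the joint limit $\partial P/\partial q(t,q)\to 0$ as $q\to-\infty$ uniformly in $t\in[0,1]$, which does hold because the $n=1$ term contributes $0$ to the numerator and at least $2^{-t}\ge 1/2$ to the denominator. (ii) For the failure mode $q^{**}\ge -1$ (including $q^{**}=+\infty$, which you should also mention explicitly), the heuristic that the log-average blows up is right, but one should say why: for fixed $N$, $\partial P/\partial q(t_k,q_k)\ge(\log N)\,\frac{\sum_{n>N}2^{-nt_k}n^{q_k}}{\sum_{n\ge1}2^{-nt_k}n^{q_k}}$, and the ratio tends to $1$ because the finite head stays bounded while, by Fatou, $\sum_{n\ge1}2^{-nt_k}n^{q_k}\to\infty$; letting $N\to\infty$ contradicts $\partial P/\partial q(t_k,q_k)=\xi<\infty$. (iii) The claim $\partial P/\partial q(t,q)\to\infty$ as $q\to+\infty$ for $t>0$ deserves one line of the same type of tail-domination argument. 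With those details filled in, your argument is a complete, self-contained proof of the statement, which is arguably clearer than leaving it as a black-box citation.
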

   \medskip
   By differentiating the first equation in system \eqref{equa.5.1} with respect to $\xi$ and applying the second equation in \eqref{equa.5.1}, we obtain the first derivative of function $t$ with respect to $\xi$, i.e.,
  \begin{equation}\label{prop.3}
   t^{\prime}(\xi)=\frac{q(\xi)}{(\partial P/\partial t)\big(t(\xi),q(\xi)\big)},\quad \forall \xi>0.
  \end{equation}
  
\subsection{Proof of Theorem \ref{thm.3} }
  
  As is widely known, if $f$ is  a convex continuously differentiable function on an interval $I$, then $f^{\prime}$ is increasing and 
\[f^{\prime}(x)\leq \frac{f(y)-f(x)}{y-x}\leq f^{\prime}(y)\quad \forall x,y\in I, x<y.
  \] Using this fact, we next present some properties on $t(\xi)$.

  \medskip
The proof of the following proposition is similar to that of \cite[Proposition 4.14]{A.H.Fan}.
  \begin{proposition}\label{prop.2}
    $q(\xi)<0$ for $\xi<\xi_0$; $q(\xi_0)=0$; $q(\xi)>0$ for $\xi>\xi_0$.
  \end{proposition}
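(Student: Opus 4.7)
The plan is to argue that $\xi_0$ is the unique zero of the continuous function $\xi \mapsto q(\xi)$ and that $q$ crosses zero with strictly positive slope at $\xi_0$; the sign of $q$ on each side then follows by continuity. I would execute this in three steps.

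First, I would pin down the zero set of $q$. Proposition~\ref{4.11} already gives $q(\xi_0) = 0$ together with analyticity (hence continuity) of $q$, so it suffices to show $\xi_0$ is the only zero. Suppose $q(\xi) = 0$ for some $\xi > 0$. The first equation in \eqref{equa.5.1} becomes $P(t(\xi), 0) = 0$, and by \eqref{equa.5.5}, $P(t, 0) = -\log(2^t - 1)$ is strictly decreasing in $t$ on $(0, \infty)$ with a unique zero at $t = 1$, forcing $t(\xi) = 1$. The second equation of \eqref{equa.5.1} then gives $\xi = (\partial P/\partial q)(1, 0) = \xi_0$.

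Second, I would compute $q'(\xi_0)$ by implicit differentiation of \eqref{equa.5.1}. Differentiating $P(t(\xi), q(\xi)) = q(\xi)\xi$ at $\xi_0$, the terms involving $q'$ and $q$ cancel thanks to $(\partial P/\partial q)(1, 0) = \xi_0$ and $q(\xi_0) = 0$, leaving $(\partial P/\partial t)(1, 0) \cdot t'(\xi_0) = 0$; since $\partial P/\partial t < 0$ on $D$ by Proposition~\ref{prop.1}(2), this forces $t'(\xi_0) = 0$. Differentiating the relation $(\partial P/\partial q)(t(\xi), q(\xi)) = \xi$ and using $t'(\xi_0) = 0$ yields $(\partial^2 P/\partial q^2)(1, 0) \cdot q'(\xi_0) = 1$, so $q'(\xi_0) = 1/(\partial^2 P/\partial q^2)(1, 0) > 0$ by Proposition~\ref{prop.1}(3).

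These two facts combine to give the claim: $q$ is continuous, vanishes only at $\xi_0$, and has strictly positive derivative there, so it passes from negative to positive at $\xi_0$ and, having no other zeros, keeps constant signs on $(0, \xi_0)$ and $(\xi_0, \infty)$. The most delicate point is the uniqueness step, where one must verify that $t(\xi) > 0$ whenever $q(\xi) = 0$ so that $P(t(\xi), 0)$ is defined; this is immediate from the description of $D$ in Proposition~\ref{press}, since $(t, q) = (0, 0) \notin D$, and more generally any point of $D$ on the line $q = 0$ has $t > 0$.
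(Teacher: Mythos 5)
Your proof is correct, but it takes a genuinely different route from the paper's. The paper argues by soft convexity: from $P(1,0)=0$ and $(\partial P/\partial q)(1,0)=\xi_0$ it deduces the supporting-line inequality $P(1,q)\geq \xi_0 q$, then combines this with the strict monotonicity of $P$ in $t$ and the fact that $t(\xi)=1$ only at $\xi=\xi_0$ to derive a contradiction from the first equation of \eqref{equa.5.1} whenever $q(\xi)$ has the wrong sign. You instead first show that $\xi_0$ is the \emph{only} zero of $q$ (using the explicit formula $P(t,0)=-\log(2^t-1)$, whose unique zero at $t=1$ forces $t(\xi)=1$ and hence $\xi=(\partial P/\partial q)(1,0)=\xi_0$), and then determine the sign on each side by computing $q'(\xi_0)=1/(\partial^2 P/\partial q^2)(1,0)>0$ via implicit differentiation, which also requires establishing $t'(\xi_0)=0$ along the way (consistent with \eqref{prop.3}). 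Both arguments are complete. The paper's has the advantage of not using differentiability of $q(\xi)$ at all and of transferring verbatim to settings (such as continued fractions) where no closed form for $P(t,0)$ is available; yours exploits the explicit pressure formula special to P2GLE and the analyticity guaranteed by Proposition \ref{4.11}, and as a byproduct yields the quantitative information $t'(\xi_0)=0$ and $q'(\xi_0)>0$. You were also right to flag the one delicate point, namely that any solution with $q(\xi)=0$ must have $t(\xi)>0$ so that $P(t(\xi),0)$ is given by the stated formula; your justification via the description of the domain $D$ is adequate.
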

  \begin{proof}
    By the convexity of the function $P(1,q)$ with respect to $q$, and the fact $P(1,0)=0$, we have
    $$\frac{P(1,q)-0}{q-0}\geq\frac{\partial P}{\partial q}(1,0)=\xi_0\quad(q>0);\quad\frac{0-P(1,q)}{0-q}\leq\frac{\partial P}{\partial q}(1,0)=\xi_0\quad(q<0).$$
    Thus
    $$P(1,q)\geq \xi_0q ,\quad \forall q\in\R.$$
     Proposition \ref{4.11} shows that $t(\xi)=1$ if and only if $\xi=\xi_0$. Suppose $t\in(0,1)$. For $\xi>\xi_0$, if $q(\xi)\leq 0$, then
     $$P(t,q)> P(1,0)\geq q\xi_0\geq q\xi$$
     which contradicts to the first equation in \eqref{equa.5.1}. Thus 
     $$q(\xi)>0 \quad(\xi>\xi_0).$$
    The rest of the proof runs as before.
     
  \end{proof}

  \medskip
Let us use Proposition \ref{prop.2} and \eqref{prop.3} to deduce some further properties of $t(\xi)$.
  \begin{proposition}\label{proposition.log}
    We have $t^{\prime}(\xi)>0$ for $\xi<\xi_0$, $t^{\prime}(\xi_0)=0$, and $t^{\prime}(\xi)<0$ for $\xi>\xi_0.$ Furthermore,
    \begin{align}
      t(\xi)\to0 &\qquad(\xi\to0),\\
      t(\xi)\to0 &\qquad(\xi\to\infty).\label{equa.5}
    \end{align}

  \end{proposition}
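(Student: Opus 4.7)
The signs of $t'$ are immediate from \eqref{prop.3}: since Proposition \ref{prop.1}(2) gives $\partial P/\partial t < 0$ throughout $D$, the sign of $t'(\xi)$ is opposite to the sign of $q(\xi)$, and the three claims on $t'$ then follow directly from Proposition \ref{prop.2}. A consequence is that $t$ is monotone on each of $(0,\xi_0)$ and $(\xi_0,\infty)$; taking values in $[0,1]$, it admits one-sided limits $t_0 := \lim_{\xi \to 0^+} t(\xi)$ and $t_\infty := \lim_{\xi \to \infty} t(\xi)$ in $[0,1]$. The plan for the remaining assertions is to show $t_0 = t_\infty = 0$ by coupling the defining system \eqref{equa.5.1} with the explicit series $P(t,q) = \log \sum_n 2^{-nt} n^q$.

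For $t_0$: the second equation $\partial P/\partial q(t(\xi), q(\xi)) = \xi \to 0$, combined with the strict convexity of $P$ in $q$ (Proposition \ref{prop.1}(3)) and Proposition \ref{prop.0}(3), forces $q(\xi) \to -\infty$. As $q \to -\infty$ the $n = 1$ term dominates in the series, giving $P(t(\xi), q(\xi)) \to -t_0 \log 2$. Retaining one more term in the expansion of $\partial P/\partial q$ yields $\xi \sim (\log 2)\,2^{q - t(\xi)}$, whence $q(\xi) = \log_2 \xi + O(1)$ and $q(\xi)\xi \to 0$. The first equation $P = q\xi$ then gives $-t_0 \log 2 = 0$, so $t_0 = 0$.

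For $t_\infty$: I would argue by contradiction, assuming $t_\infty > 0$. Since $\partial P/\partial q$ is strictly increasing in $q$, finite for bounded $q$, and equal to $\xi \to \infty$ along the solution, $q(\xi) \to \infty$ is forced. A Laplace-type estimate on $\sum_n 2^{-nt} n^q$, whose summand peaks near $n^* = q/(t \log 2)$, then yields, uniformly for $t$ in a neighborhood of $t_\infty$,
\[
P(t, q) = q \log\bigl(q/(t \log 2)\bigr) - q + O(\log q), \qquad
\frac{\partial P}{\partial q}(t, q) = \log\bigl(q/(t \log 2)\bigr) + o(1).
\]
Setting $G(t, q) := q\,\partial P/\partial q - P$, these expansions give $G(t(\xi), q(\xi)) = q(\xi) + o(q(\xi)) \to \infty$. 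But directly from \eqref{equa.5.1}, $G(t(\xi), q(\xi)) = q(\xi)\xi - q(\xi)\xi \equiv 0$, a contradiction. Hence $t_\infty = 0$.

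The main technical obstacle is the Laplace-type asymptotic used in the $t_\infty$ step: the summand lives on $\mathbb{N}$, the peak position $n^* = q/(t \log 2)$ drifts to infinity, and one needs the expansion uniformly in $t$ near $t_\infty$. A clean way to carry this out is to sandwich the sum between the integrals $\int_1^\infty 2^{-xt} x^q \diff x$ and a unit-shifted version, then evaluate them via the substitution $x = q y /(t \log 2)$ together with Stirling's asymptotic for the Gamma function. The $\xi \to 0$ direction is by comparison straightforward, since only the first one or two terms of the defining series are relevant.
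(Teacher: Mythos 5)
Your argument for the sign of $t'$ is exactly the paper's (combine \eqref{prop.3}, Proposition \ref{prop.1}(2) and Proposition \ref{prop.2}), but for the two limits you take a genuinely different, more computational route. For $\xi\to 0$ the paper inverts $t$ on $(0,\xi_0)$, introduces the auxiliary point $q_0(t)>q(t)$ where $P(t,q_0(t))=0$, bounds $\xi_1(t)=\frac{\partial P}{\partial q}(t,q(t))<\frac{\partial P}{\partial q}(t,q_0(t))$ by convexity, and sends $t\to 0$ using $P(0,q)=\log\zeta(-q)$ together with \eqref{equa.5.4}; you instead stay on the solution curve, show $q(\xi)\to-\infty$, extract $q(\xi)=\log_2\xi+O(1)$ from the two-term expansion of the series, conclude $q(\xi)\xi\to 0$, and read off $t_0=0$ from $P\to -t_0\log 2$. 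Both are sound; yours trades the convexity trick for explicit asymptotics of the explicit series. For $\xi\to\infty$ the divergence is larger: the paper's proof is two lines ($\xi_2(t)>\frac{\partial P}{\partial q}(t,0)$ plus \eqref{equa.5.7}), whereas you run a self-contained contradiction via the Legendre-type identity $q\,\partial P/\partial q-P\equiv 0$ on the solution curve against Laplace--Stirling asymptotics for large $q$. Your version is heavier but fully quantitative, and it buys an argument that does not depend on reading off $t(\xi)\to0$ from a one-sided bound on the inverse function. The one place you should be careful is the precision of the derivative expansion: your contradiction $G=q+o(q)$ genuinely needs $\frac{\partial P}{\partial q}(t,q)=\log\bigl(q/(t\log 2)\bigr)+o(1)$ (an $O(1)$ error would be swallowed by the $q\cdot O(1)$ term and could cancel the surviving $+q$), yet your technical plan (integral sandwich plus Stirling) is spelled out only for $P$ itself. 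This is fillable --- either by a second Laplace estimate on the ratio $\sum 2^{-nt}(\log n)n^q/\sum 2^{-nt}n^q$, or by differencing the $P$-expansion over increments $\varepsilon q$ with $\varepsilon\to 0$ slowly (say $\varepsilon=q^{-1/2}$) and using convexity in $q$ --- but as written it is the one step of the proposal that is asserted rather than derived.
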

  \begin{proof}
    By Proposition \ref{prop.1}(2), $\partial P/\partial t <0$. Hence, according to Proposition \ref{prop.2} and \eqref{prop.3}, $t(\xi)$ is strictly increasing on $(0,\xi_0)$ and decreasing on $(\xi_0,\infty)$.
Then by the analyticity of $t(\xi)$, we can obtain two analytic inverse functions on the two intervals respectively. For the first inverse function, write $\xi_1=\xi_1(t)$. Then $\xi_1\in(0,\xi_0)$ and $\xi_1^{\prime}(t)>0.$ Considering equations \eqref{equa.5.1} as  equations on $t$, we have
$$\xi_1(t)=\frac{P(t,q(t))}{q(t)}=\frac{\partial P}{\partial q}(t,q(t))
.$$
 Since $\xi_1(t)\in(0,\xi_0)$, we conclude that $q(\xi_1(t))<0$, and in consequence, $P(t,q(\xi_1(t)))<0$. By Proposition \ref{prop.0}(2), $\lim_{q\to\infty}P(t,q)=\infty.$ Thus there exits $q_0(t)$ such that $q_0(t)>q(t)$ and $P(t,q_0(t))=0$. From $\partial^2 p/\partial q^2 >0,$ we see that
\[\xi_1(t)=\frac{\partial P}{\partial q}(t,q(t))<\frac{\partial P}{\partial q}(t,q_0(t)).\]
Since $P(0,q)=\log\zeta(-q)$, then $\lim_{t\to0}q_0(t)=-\infty$. Consequently,
\[\lim_{t\to0}\frac{\partial P}{\partial q}(t,q_0(t))=\lim_{q\to-\infty}\frac{\partial P}{\partial q}(0,q)=0.
  \]
  From the above, it follows that $\lim_{t\to0}\xi_1(t)=0$, and so $\lim_{\xi\to 0}t(\xi)=0.$

  \medskip
  For the second inverse function, write $\xi_2=\xi_2(t)$. Then $\xi_2\in(\xi_0,\infty)$ and $\xi_2^{\prime}(t)<0$. An analysis similar to that in the proof of the first inverse function shows that
  \[\xi_2(t)=\frac{\partial P}{\partial q}(t,q(t))>\frac{\partial P}{\partial q}(t,0).\]
  From \eqref{equa.5.7}, we deduce that $\lim_{\xi\to\infty}t(\xi)=0$.
  \end{proof}
  \medskip
  As in the proof of \cite[Proposition 5.1 ]{A.H.Fan}, we obtain the following proposition.
  \begin{proposition}\label{prop.4}
   We have
    \begin{equation}
  \lim_{\xi\to0}q(\xi)=-\infty.
    \end{equation}
  \end{proposition}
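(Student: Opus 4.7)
The plan is to argue by contradiction. Suppose that $q(\xi)$ does not tend to $-\infty$ as $\xi\to 0^+$. Then there exist $M>0$ and a sequence $\xi_n\to 0^+$ with $q(\xi_n)\geq -M$ for every $n$. Since $\xi_n<\xi_0$ for large $n$, Proposition~\ref{prop.2} gives $q(\xi_n)<0$, and hence $|q(\xi_n)|\leq M$. By Proposition~\ref{proposition.log} we also have $t(\xi_n)\to 0$.

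The main idea is to extract a vanishing upper bound and a strictly positive lower bound on $P(t(\xi_n),q(\xi_n))$ that are incompatible. From the first equation of the system \eqref{equa.5.1} we read off $P(t(\xi_n),q(\xi_n))=\xi_n\,q(\xi_n)$, whose absolute value is at most $M\xi_n\to 0$, so
$$P\bigl(t(\xi_n),q(\xi_n)\bigr)\longrightarrow 0.$$
For the lower bound, I would invoke the monotonicity of $P$ in $q$ (Proposition~\ref{prop.1}(3)): since $q(\xi_n)\geq -M$,
$$P\bigl(t(\xi_n),q(\xi_n)\bigr)\geq P\bigl(t(\xi_n),-M\bigr).$$
Using the explicit formula \eqref{equa.5.2},
$$P(t,-M)=\log\sum_{k=1}^{\infty}2^{-kt}k^{-M},$$
and applying monotone convergence (the factor $2^{-kt}$ increases to $1$ as $t\downarrow 0$), one gets
$$\lim_{t\to 0^+}P(t,-M)=\log\sum_{k=1}^{\infty}k^{-M},$$
which equals $\log\zeta(M)>0$ if $M>1$ and equals $+\infty$ if $M\leq 1$. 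In either case this limit is strictly positive, giving $\liminf_{n\to\infty}P(t(\xi_n),q(\xi_n))>0$ and contradicting the vanishing obtained above. Thus $\lim_{\xi\to 0^+}q(\xi)=-\infty$.

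The structure of the proof is short because almost every ingredient is already in place: the explicit form of $P(t,q)$, the monotonicity and convexity of $P$ via Proposition~\ref{prop.1}, and the limit $t(\xi)\to 0$ from Proposition~\ref{proposition.log}. The only point that really needs checking is that the lower bound $P(t(\xi_n),-M)$ does not itself collapse to zero in the limit; this is where the comparison with the zeta-like series and the monotone convergence theorem come in, and it is where I expect the main (though still mild) technical work to sit. Every other step is a direct application of facts already established earlier in the paper.
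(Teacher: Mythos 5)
Your argument is correct. Since $t(\xi_n)\to 0$ (Proposition~\ref{proposition.log}) and $P(t,q)$ is increasing in $q$ (Proposition~\ref{prop.1}(3)), the bound $q(\xi_n)\geq -M$ forces $P\bigl(t(\xi_n),q(\xi_n)\bigr)\geq P\bigl(t(\xi_n),-M\bigr)$, which by the explicit formula \eqref{equa.5.2} and monotone convergence has $\liminf$ at least $\log\zeta(M)>0$ (or $+\infty$ if $M\leq 1$); meanwhile $P\bigl(t(\xi_n),q(\xi_n)\bigr)=q(\xi_n)\xi_n$ lies in $[-M\xi_n,0)$, which tends to $0$. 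The contradiction is clean, and in fact you only need the sign, not the full vanishing: for $\xi_n<\xi_0$ we have $q(\xi_n)<0$ so $P\bigl(t(\xi_n),q(\xi_n)\bigr)<0$, already incompatible with the positive lower bound for large $n$. The paper itself gives no proof and just defers to \cite[Proposition 5.1]{A.H.Fan}, where the continued-fraction pressure has no closed form and the estimate must be done indirectly; your proof exploits the explicit P2GLE formula for $P(t,q)$ to make the lower-bound step elementary, which is a genuinely simpler route in this setting.
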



  \begin{proposition}\label{prop.5} We have the following two properties of $\partial P/\partial t$.
    \begin{item}(1) For $t>0$, 
  $$\lim_{q\to-\infty} \frac{\partial P}{\partial t}(t,q)=-\log2.$$
    \end{item}
    \begin{item}
  (2) For $q=0$,
  $$ \lim_{t\to0} \frac{\partial P}{\partial t}(t,0)=-\infty.$$
    \end{item}
  \end{proposition}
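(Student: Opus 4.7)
The starting point is the explicit expression
\[
\frac{\partial P}{\partial t}(t,q)=-(\log 2)\,\frac{\sum_{n=1}^{\infty} n\, 2^{-nt} n^q}{\sum_{n=1}^{\infty} 2^{-nt} n^q},
\]
which is justified by termwise differentiation of $P(t,q)=\log\sum_{n\geq 1}2^{-nt}n^q$ in exactly the way \eqref{equa.5.8} was justified in Proposition \ref{prop.0}: for fixed $t>0$ and any $q_0$, the dominating series $\sum_{n} n\,2^{-nt} n^{q_0}$ converges, and for $q$ in a neighborhood of any interior point of $D$ one can differentiate under the sum.

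For part (1), I would factor out the $n=1$ term from the numerator and denominator, writing
\[
\frac{\partial P}{\partial t}(t,q)=-(\log 2)\,\frac{1+\sum_{n=2}^{\infty} n\, 2^{-(n-1)t} n^q}{1+\sum_{n=2}^{\infty} 2^{-(n-1)t} n^q}.
\]
For $t>0$ fixed and $q\leq 0$, each summand in both tails is bounded by the $q=0$ value (which gives a convergent series), and $n^q\to 0$ pointwise as $q\to-\infty$ for $n\geq 2$. Hence dominated convergence forces both tails to vanish, and the ratio tends to $1$, giving the claim.

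Part (2) is in fact a closed-form computation. At $q=0$, using $\sum_{n\geq 1}x^n=x/(1-x)$ and $\sum_{n\geq 1}nx^n=x/(1-x)^2$ with $x=2^{-t}$, the denominator equals $(2^t-1)^{-1}$ and the numerator equals $2^{-t}/(1-2^{-t})^2$. Simplifying,
\[
\frac{\partial P}{\partial t}(t,0)=-\frac{\log 2}{1-2^{-t}}.
\]
As $t\to 0^+$, $1-2^{-t}\to 0^+$, so the expression tends to $-\infty$, as claimed.

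There is no real obstacle here: the only point requiring mild care is the dominated-convergence step in part (1), and even that is routine given the exponential decay factor $2^{-(n-1)t}$ for fixed $t>0$. Part (2) reduces to a geometric-series identity that gives an explicit closed form on the line $q=0$, after which the limit is immediate.
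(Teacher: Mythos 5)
Your proof is correct. For part (1) you follow essentially the same route as the paper: write $\partial P/\partial t$ as the ratio $-(\log 2)\sum_n n\,2^{-nt}n^q / \sum_n 2^{-nt}n^q$, observe that the tail terms ($n\geq 2$) decay to zero as $q\to-\infty$, and conclude the ratio tends to $1$. You supply the dominated-convergence justification explicitly, which the paper leaves implicit (it simply records the formula for $q<-2$ and writes ``Therefore''). For part (2) your route is computationally different but equivalent: the paper differentiates the closed form $P(t,0)=-\log(2^t-1)$ established in Proposition~\ref{prop.0} to get $\partial P/\partial t(t,0)=-2^t\log 2/(2^t-1)$, whereas you evaluate the series ratio directly using $\sum n x^n = x/(1-x)^2$ and $\sum x^n = x/(1-x)$ with $x=2^{-t}$, arriving at the same expression $-\log 2/(1-2^{-t})$. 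The paper's version is marginally shorter because it reuses the already-established closed form for $P(t,0)$; your version is self-contained and avoids invoking that identity. Both are sound and both give the limit $-\infty$ as $t\to 0^+$.
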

  \begin{proof}
    For $t>0$ and $q<-2$, 
    \begin{equation}
      \frac{\partial P}{\partial t}(t,q)=\frac{(-\log2)\sum_{n=1}^{\infty}2^{-n t}n^{q+1}}{\sum_{n=1}^{\infty}2^{-n t}n^{q}}.
    \end{equation}
    Therefore, $$\lim_{q\to-\infty} \frac{\partial P}{\partial t}(t,q)=-\log2.$$
    For $q=0$, $P(t,0)=-\log(2^t-1)$. Therefore, $$\frac{\partial P}{\partial t}(t,0)=\frac{-2^t\log2}{2^t-1}, \qquad \lim_{t\to0} \frac{\partial P}{\partial t}(t,0)=-\infty.$$
  
  \end{proof}
  \medskip
  Combining Propositions \ref{prop.4} and \ref{prop.5} yields
  \[\lim_{\xi\to0}t^{\prime}(\xi)=\infty.
    \]
We have thus proved (1)-(4) of Theorem \ref{thm.3}.

\medskip
Our next goal is to calculate the value of $\dim_H E_{\varphi}(0)$.
\begin{theorem}\label{them.similar}
  For  $\xi=0$, $\dim_H E_{\varphi}(0)=0$.
\end{theorem}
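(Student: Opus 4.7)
The plan is to adapt the covering argument from Proposition \ref{thm.5} (the analogous boundary statement for the Khintchine spectrum). Since Hausdorff dimension is nonnegative, it suffices to prove $\dim_H E_{\varphi}(0) \leq s$ for every $s > 0$.

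Fix $s > 0$. By Proposition \ref{proposition.log} together with Proposition \ref{prop.2}, choose $\xi \in (0, \xi_0)$ with $t(\xi) < s$ and $q(\xi) < 0$. Since $P(\cdot, q(\xi))$ is strictly decreasing (Proposition \ref{prop.1}(2)), the gap
\[
\delta := P(t(\xi), q(\xi)) - P(s, q(\xi))
\]
is strictly positive; fix $\varepsilon_0 \in (0, \delta / \abs{q(\xi)})$. For each $n$, let $\mathcal{J}(n, \varepsilon_0)$ denote the family of $n$th order cylinders $I_n(d_1, \ldots, d_n)$ with $\sum_{j=1}^n \log d_j < (\xi + \varepsilon_0) n$. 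On $E_{\varphi}(0)$ the Birkhoff average of $\log d_1$ tends to $0 < \xi + \varepsilon_0$, so for every $N \geq 1$ the collection $\{J : J \in \mathcal{J}(n, \varepsilon_0),\ n \geq N\}$ covers $E_{\varphi}(0)$ by cylinders of diameter at most $2^{-N}$.

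Since $(s, q(\xi)) \in D$, Theorem \ref{measure} produces the Gibbs measure $\mu_{s, q(\xi)}$, and the same computation that led to \eqref{equa.point}, now with $F = \{-\log i\}_{i \in \N}$, yields
\[
|J|^s \asymp \mu_{s, q(\xi)}(J) \exp\Bigl( - q(\xi) \sum_{j=1}^n \log d_j + n P(s, q(\xi)) \Bigr).
\]
For $J \in \mathcal{J}(n, \varepsilon_0)$, combining $-q(\xi) \sum \log d_j < \abs{q(\xi)} (\xi + \varepsilon_0) n$ with the defining relation $P(t(\xi), q(\xi)) = q(\xi) \xi$ bounds this exponent above by $n \bigl(-\delta + \abs{q(\xi)} \varepsilon_0\bigr) < 0$. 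Summing $\sum_J \mu_{s, q(\xi)}(J) \leq 1$ and then a geometric series in $n$ gives $\sum_{n \geq N} \sum_{J \in \mathcal{J}(n, \varepsilon_0)} |J|^s < \infty$, hence $\dim_H E_{\varphi}(0) \leq s$; letting $s \to 0^+$ completes the proof. The delicate step is the off-diagonal choice of parameters: we apply the Gibbs bound to $\mu_{s, q(\xi)}$, whose natural support is a level set $E_{\varphi}(\xi')$ with $\xi' \neq 0$, rather than the diagonal measure $\mu_{t(\xi), q(\xi)}$. The crucial trade-off is the inequality $\abs{q(\xi)} \varepsilon_0 < \delta$, which depends on both the strict monotonicity of $P$ in $t$ and the strict positivity of $-q(\xi)$ guaranteed by Propositions \ref{prop.2} and \ref{prop.4}.
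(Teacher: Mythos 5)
Your proposal is correct and follows essentially the same route as the paper: the off-diagonal Gibbs estimate with parameters $(s,q(\xi))$, the cover by cylinders on which the Birkhoff sum of $\log d_1$ is below $(\xi+\varepsilon_0)n$, and the trade-off $\abs{q(\xi)}\varepsilon_0<\delta$ are exactly the paper's argument (there written as $\frac{P(t,q(\xi))-P(t(\xi),q(\xi))}{q(\xi)}>\varepsilon_0$), adapted from Proposition \ref{thm.5}. The only quibble is attributional: the strict negativity of $q(\xi)$ for $\xi<\xi_0$ comes from Proposition \ref{prop.2}, not Proposition \ref{prop.4}.
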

\begin{proof}
  For any $t>0$, since $\lim_{\xi\to 0}t(\xi)=0$, there exists $0<\xi<\xi_0$ such that $0<t(\xi)<t$ and $q(\xi)<0$. Then we can choose $\varepsilon_0>0$, such that 
  $$\frac{P(t,q(\xi))-P\big(t(\xi),q(\xi)\big)}{q(\xi)}>\varepsilon_0.$$
  Let $\mathcal{I}(n,\xi,\varepsilon_0)$ be the family of all $n$th-order cylinders $I_n(d_1,\cdots, d_n)$ such that 
$$\frac{1}{n}\sum_{j=1}^n \log(d_j(x))<\xi+\varepsilon_0.$$
Then $\{J:J\in\mathcal{I}(n,\xi,\varepsilon_0), n\geq1\}$ is a cover of $E_{\varphi}(0)$.  
Since $(t\Psi-qF)^{(i)}=-ti\log2+q\log i$, by Lemma \ref{lemma},
$$\mu_{t,q}(I_n(x))\asymp\exp(-nP(t,q))\abs{I_n(x)}^{t}(d_1\cdots d_n)^q.$$
A calculation similar to that in the proof of Proposition \ref{thm.5} shows that 
\begin{align}
\sum_{n=1}^{\infty}\sum_{J\in\mathcal{I}(n,\xi,\varepsilon_0)}\abs{J}^{t} & =\sum_{n=1}^{\infty}\sum_{(d_1\cdots d_n)<e^{n(\xi+\varepsilon_0)}}\frac{e^{nP(t,q(\xi))}}{(d_1\cdots d_n)^{q(\xi)}}\frac{\abs{J}^{t}(d_1\cdots d_n)^{q(\xi)}}{e^{nP(t,q(\xi))}}\\
& \leq C \sum_{n=1}^{\infty}e^{nP(t,q(\xi))-(\xi+\varepsilon_0)q(\xi)}\cdot \sum_{J\in\mathcal{I}(n,\xi,\varepsilon_0)}\mu_{t,q(\xi)}(J)<\infty.
\end{align}
Thus $\dim_H E_{\varphi}(0)\leq t$. By the arbitrariness of $t$, $\dim_H E_{\varphi}(0)=0$.
\end{proof}
\medskip

For a rigorous proof of the last assertion of Theorem \ref{thm.3}, we refer the reader to \cite[p.103]{A.H.Fan}. A point that should be stressed is that the number of inflection points is not clear yet.

\section{Birkhoff spectrum for $\varphi:x\mapsto2^{d_1(x)}$}
Consider the level sets 
$$E_{\varphi}(\xi)=\Big\{x\in(0,1]:\lim_{n\to\infty}\frac{1}{n}\sum_{j=0}^{n-1}2^{d_1(T^jx)}=\xi\Big\}, \quad \xi\geq 2$$
with $\varphi(x)=2^{d_1(x)}$.
\medskip

Let $\Psi=\{\log\abs{\phi'_i}:i\in\N\}=\{-i\log2:i\in\N\}$ and $F=\{-2^i:i\in\N\}$.
It can be checked that $\Psi$ and $F$ are two families of real-valued H{\"o}lder functions. 
\begin{proposition} Let $D:=\{(t,q):t>0, q=0 \text{ or } q<0\}$. For $(t,q)\in D:$
  \begin{item}
    (1) the family $t\Psi-qF$ is H{\"o}lder and strong; 
  \end{item}
  \begin{item}
   (2) the topological pressure $P$ of the potential $t\Psi-qF$ is
    \begin{equation}
      P(t,q)= \log(\sum_{n=1}^{\infty}2^{-nt}e^{2^{n}q}).
    \end{equation}
  \end{item}
\end{proposition}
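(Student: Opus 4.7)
The plan is to mirror the proof of Proposition \ref{press} essentially verbatim, since the structural features that made that argument work (each $(t\Psi-qF)^{(i)}$ being a constant function on $X$) are preserved here. The only substantive difference lies in verifying the summability condition that characterizes the strong H\"older property, which for this potential involves a doubly exponential term.

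First I would compute $(t\Psi-qF)^{(i)} = -ti\log 2 + q\cdot 2^{i}$ and observe that this is independent of $x\in X$. Consequently $V_n(t\Psi-qF)=0$ for every $n$, so the H\"older condition is immediate. For the strong condition I need
\[
\sum_{i\in\N}\bigl\|e^{(t\Psi-qF)^{(i)}}\bigr\|_\infty \;=\; \sum_{i\in\N}\exp\!\bigl(-ti\log 2 + q\cdot 2^{i}\bigr) \;<\; \infty.
\]
I would split into the two cases defining $D$. When $q<0$, the factor $e^{q\cdot 2^{i}}$ decays doubly exponentially in $i$, which dominates any polynomial or exponential growth in $i$, so convergence holds for every $t>0$ (indeed for every $t\in\R$). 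When $q=0$, the sum reduces to $\sum_{i} 2^{-ti}$, which converges precisely because $t>0$. Together these cover all of $D$.

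For part (2), because $(t\Psi-qF)^{(\omega_j)}\circ\phi_{\sigma^{j}\omega}(x)$ does not depend on $x$, taking the supremum in the definition \eqref{pressure} is trivial and yields
\[
\sup_{x\in X}\sum_{j=1}^{n}(t\Psi-qF)^{(\omega_j)}\circ\phi_{\sigma^{j}\omega}(x)
= \sum_{j=1}^{n}\bigl(-t\omega_j\log 2 + q\cdot 2^{\omega_j}\bigr).
\]
The sum inside the exponential is separable in $\omega_1,\ldots,\omega_n$, so the $n$-fold sum over $\N^n$ factors as an $n$th power of the single-index sum $\sum_{\omega=1}^{\infty} 2^{-t\omega}e^{q\cdot 2^{\omega}}$. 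Taking $\frac{1}{n}\log$ and passing to the limit then gives the claimed formula for $P(t,q)$, exactly as in the proof of Proposition \ref{press}.

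The only place that requires any care at all is the summability verification in step one, since the term $e^{q\cdot 2^{i}}$ is the unique feature distinguishing this potential from the earlier ones; but as noted, the doubly exponential decay for $q<0$ makes this the easiest of the three cases treated so far, and the boundary condition $q=0$ is what dictates the appearance of $t>0$ in the definition of $D$. Everything else is a direct transcription of the earlier argument.
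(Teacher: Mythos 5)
Your proposal is correct and follows the paper's own proof essentially verbatim: the same observation that each $(t\Psi-qF)^{(i)}=-ti\log 2+q2^{i}$ is constant in $x$, hence $V_n=0$, the same summability check for the strong condition, and the same factorization of the $n$-fold sum into an $n$th power before taking $\frac{1}{n}\log$. The only difference is that you spell out the $q<0$ versus $q=0$ case split for convergence, which the paper leaves implicit.
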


    \begin{proof}
 (1) Since $(t\Psi-qF)^{(i)}=-ti\log2+q2^i,$
  $V_n(t\Psi-qF)=0.$ By definition, $t\Psi-qF$ is H{\"o}lder. 
\medskip

  For $(t,q)\in D$, we have 
  $$\sum_{i\in\N}e^{-ti\log2+q2^i}=\sum_{i\in\N}2^{-i t}e^{2^iq}<\infty.$$
  Thus $t\Psi-qF$ is strong.
  \medskip

  (2) Observe that
  $$\sup_{x\in X}\sum_{j=1}^n(t\Psi-qF)^{(\omega_j)}\circ \phi_{\sigma^j\omega}(x)=-t\log2\sum_{j=1}^n\omega_j+q\sum_{j=1}^n2^{\omega_j}.$$
 By definition,
  \begin{align*}
    P(t,q)
    ={}& \lim_{n\to\infty}\frac{1}{n}\log\sum_{\omega_1,\omega_2\cdots\omega_n\in\N}\exp(-t\log2\sum_{j=1}^n\omega_j+q\sum_{j=1}^n2^{\omega_j})\\
    ={}& \lim_{n\to\infty}\frac{1}{n}\log(\sum_{\omega=1}^{\infty}2^{-\omega t}e^{2^{\omega}q})^n\\
    ={}& \log(\sum_{n=1}^{\infty}2^{-nt}e^{2^{n}q}).
  \end{align*}
\end{proof}
\medskip
 Let us investigate some properties of $P(t,q)$.  Let $D_1=\{(t,q):t\geq0,q<0\}$.
 \begin{proposition}\label{prop.6}
  For $(t,q)\in D_1$,
  \begin{equation}\label{equa.6.3}
    \frac{\partial P}{\partial q}(t,q)=\frac{\sum_{n=1}^{\infty}2^{-n(t-1)}e^{2^{n}q}}{\sum_{n=1}^{\infty}2^{-n t}e^{2^{n}q}}.
  \end{equation}
  Consequently,
  \begin{item}
    (1) for fixed $0<t\leq1$,
    \begin{equation}\label{equa.6.4}
      \lim_{q\to0}\frac{\partial P}{\partial q}(t,q)=\infty;
    \end{equation}
  \end{item}
  \begin{item}
    (2) we have
    \begin{equation}\label{equa.6.11}
      \lim_{(t,q)\to(0,0)}\frac{\partial P}{\partial q}(t,q)=\infty;
    \end{equation}
  \end{item}
  \begin{item}
    (3) for fixed $t\geq 0$ and $q<0$, we have $(\partial P/\partial q)(t,q)>2$, and 
    \begin{equation}\label{equa.6.5}
      \lim_{q\to-\infty}\frac{\partial P}{\partial q}(t,q)=2.
    \end{equation}
  \end{item}
 \end{proposition}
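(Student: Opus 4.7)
The plan is to establish \eqref{equa.6.3} by term-by-term differentiation and then read off (1)--(3) from that formula. For any compact $K\subset D_1$ there is $\varepsilon>0$ with $q\le -\varepsilon$ on $K$, so $e^{2^n q}\le e^{-\varepsilon 2^n}$ decays doubly-exponentially in $n$. Hence both the series defining $e^{P(t,q)}=\sum_{n\ge 1}2^{-nt}e^{2^n q}$ and its formal $q$-derivative $\sum_{n\ge 1}2^{-n(t-1)}e^{2^n q}$ converge uniformly on $K$, which justifies differentiating under the sum and yields \eqref{equa.6.3} via the chain rule applied to $\log$.

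For assertion (1), I fix $0<t\le 1$ and let $q\to 0^-$. The denominator $\sum_{n\ge 1}2^{-nt}e^{2^n q}$ is monotone in $q$ with finite limit $1/(2^t-1)$. The numerator $\sum_{n\ge 1}2^{n(1-t)}e^{2^n q}$ is also monotone in $q$, but its termwise limit $\sum_{n\ge 1}2^{n(1-t)}$ diverges because $1-t\ge 0$ forces each term to be at least $1$. The monotone convergence theorem applied to series then forces the numerator to diverge, so the ratio tends to $\infty$.

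For assertion (3), I rewrite the right-hand side of \eqref{equa.6.3} as a convex combination
\[\frac{\partial P}{\partial q}(t,q)=\frac{\sum_{n\ge 1}2^n w_n(t,q)}{\sum_{n\ge 1}w_n(t,q)},\qquad w_n(t,q):=2^{-nt}e^{2^n q}>0,\]
i.e.\ a weighted average of the values $(2^n)_{n\ge 1}$. Since $2^n\ge 2$ with strict inequality for $n\ge 2$ and every $w_n$ is strictly positive, the weighted average is strictly larger than $2$. For the limit as $q\to-\infty$ I factor out $w_1=2^{-t}e^{2q}$ from both sums and obtain
\[\frac{\partial P}{\partial q}(t,q)=\frac{2+\sum_{n\ge 2}2^n\,2^{-(n-1)t}e^{(2^n-2)q}}{1+\sum_{n\ge 2}2^{-(n-1)t}e^{(2^n-2)q}}.\]
Since $2^n-2\ge 2$ for $n\ge 2$, dominated convergence (majorising by the value at any fixed $q_0<0$) forces both residual sums to $0$, so the ratio converges to $2$.

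The main obstacle is assertion (2), because it is a joint limit at the corner $(0,0)$ of $D_1$, where the denominator series is divergent. My strategy is to bound the weighted average from below by truncating: for any integer $N\ge 1$,
\[\frac{\partial P}{\partial q}(t,q)\ge 2^N\cdot\frac{\sum_{n\ge N}w_n(t,q)}{\sum_{n\ge 1}w_n(t,q)}=2^N\left(1-\frac{\sum_{n<N}w_n(t,q)}{\sum_{n\ge 1}w_n(t,q)}\right).\]
Given a target $M>0$, I will first fix $N$ with $2^{N-1}\ge M$; since $w_n\le 1$ on $D_1$, the head satisfies $\sum_{n<N}w_n\le N-1$, so it suffices to force the denominator to exceed $2(N-1)$ in some neighbourhood of $(0,0)$. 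This I achieve by selecting a larger index $N'$ (depending on $N$) and observing that for $(t,q)$ sufficiently close to $(0,0)$ each of $w_1(t,q),\dots,w_{N'}(t,q)$ is close to $1$, giving $\sum_{n=1}^{N'}w_n\ge N'/2\ge 2(N-1)$. The delicate point is the order of quantifiers, which must be $M\to N\to N'\to$ neighbourhood, so that the final estimate is uniform along every path approaching the corner.
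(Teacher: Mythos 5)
Your proposal is correct. For the formula \eqref{equa.6.3} and for parts (1) and (3) it follows essentially the same route as the paper: uniform convergence of the differentiated series on regions $q\le q_0<0$ (the paper uses the Weierstrass M-test with the majorant $2^{-n(t-1)}e^{2^nq_0}$, you use compact exhaustion --- same estimate), termwise passage to the limit $q\to 0^-$ for (1), and for (3) the observation that the ratio is a weighted average of $(2^n)_{n\ge1}$ together with factoring out the $n=1$ term as $q\to-\infty$. The genuine difference is part (2): the paper reduces the joint limit $(t,q)\to(0,0)$ to showing $\lim_{t\to0}\bigl(\sum_n 2^{-n(t-1)}\bigr)/\bigl(\sum_n 2^{-nt}\bigr)=\infty$, i.e.\ to the one-variable limit along the boundary $q=0$ (which does not even lie in $D_1$), and then leaves that to the reader by analogy with \eqref{equa.5.7}; strictly speaking this does not control arbitrary paths approaching the corner from inside $D_1$. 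Your truncation bound
\[
\frac{\partial P}{\partial q}(t,q)\ \ge\ 2^N\Bigl(1-\frac{\sum_{n<N}w_n}{\sum_{n\ge1}w_n}\Bigr),
\]
combined with $w_n\le 1$ on $D_1$ and the choice of $N'$ forcing the denominator above $2(N-1)$ in a neighbourhood of the corner, handles the joint limit uniformly over all approach paths, with the quantifiers in the right order. So your argument for (2) is both different from and more complete than the paper's; the rest coincides with it.
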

 \begin{proof}
  Fix $t\geq 0$ and $q_0<0$. Then for any $q\leq q_0$, we have $$2^{-n(t-1)}e^{2^{n}q}\leq 2^{-n(t-1)}e^{2^{n}q_0}.$$
  Notice that $$\lim_{n\to\infty}\sqrt[n]{2^{-n(t-1)}e^{2^{n}q_0}}=0<1.$$
   By Weierstrass M-test, we have that the series $\sum_{n=1}^{\infty}2^{-n(t-1)}e^{2^{n}q}$ is uniformly convergent on the set $\{q:q<q_0\}.$
  Hence, we obtain \eqref{equa.6.3}.

  \medskip
  Fix $0<t\leq1$. Since $e^{2^{n}q}\to 1$ $(q\to 0)$, we have 
  \begin{equation*}
    \lim_{q\to0}\frac{\partial P}{\partial q}(t,q)=\frac{\sum_{n=1}^{\infty}2^{-n(t-1)}}{\sum_{n=1}^{\infty}2^{-nt}}.
  \end{equation*}
  Notice that $$\sum_{n=1}^{\infty}2^{-n(t-1)}=\infty, \quad\sum_{n=1}^{\infty}2^{-nt}=\frac{1}{2^t-1}.$$
  Hence, $\eqref{equa.6.4}$ is proved.
  \medskip

In order to prove (2), we need to show 
  $$\lim_{t\to 0}\frac{\sum_{n=1}^{\infty}2^{-n(t-1)}}{\sum_{n=1}^{\infty}2^{-nt}}=\infty.$$ This is analogous to that in \eqref{equa.5.7} and is left to the reader.
  
  \medskip
   Observe that 
  \[\sum_{n=1}^{\infty}2^{-n(t-1)}e^{2^{n}q}>2\sum_{n=1}^{\infty}2^{-nt}e^{2^{n}q}.\]
  As a result,
  \begin{equation*}
    \frac{\partial P}{\partial q}(t,q)>2.
  \end{equation*}
  This proves the first part of (3).
 
\medskip
 Since $\lim_{q\to-\infty}e^{(2^n-2)q}=0$ for $n\geq 2$, it follows that 
\[\lim_{q\to-\infty}\frac{\sum_{n=1}^{\infty}2^{-n(t-1)}e^{2^nq}}{\sum_{n=1}^{\infty}2^{-nt}e^{2^nq}}=\lim_{q\to-\infty}\frac{2^{-(t-1)}e^{2q}(1+\sum_{n=2}^{\infty}2^{-(n-1)(t-1)}e^{(2^n-2)q})}{2^{-t}e^{2q}(1+\sum_{n=2}^{\infty}2^{-(n-1)t}e^{(2^n-2)q})}=2,
\]
which completes the proof.
\end{proof}
\begin{proposition}
   For $(t,q)\in D_1,$
  \begin{equation}\label{equa.6.6}
    \frac{\partial P}{\partial t}(t,q)=\frac{(-\log2)\sum_{n=1}^{\infty}n2^{-n t}e^{2^{n}q}}{\sum_{n=1}^{\infty}2^{-n t}e^{2^{n}q}}.
  \end{equation}
  Consequently,
  \begin{item}
    (1) for fixed $t\geq0$,
    \begin{equation}\label{equa.6.7}
      \lim_{q\to-\infty}\frac{\partial P}{\partial t}(t,q)=-\log2;
    \end{equation}
    \end{item}
    
  \begin{item}
    (2) for $t=1$,
    \begin{equation}\label{equa.6.10}
      \lim_{q\to0}\frac{\partial P}{\partial t}(1,q)=-\frac{\log2}{2}.
    \end{equation}
  \end{item}
\end{proposition}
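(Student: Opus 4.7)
The plan is to follow the strategy used for $\partial P/\partial q$ in the preceding proposition. The three assertions split naturally into an explicit formula proved by termwise differentiation, and two asymptotic limits handled by extracting the dominant term and by monotone convergence, respectively.

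\textbf{Step 1 (the formula).} Write $P(t,q)=\log S(t,q)$ with $S(t,q)=\sum_{n=1}^{\infty}2^{-nt}e^{2^{n}q}$. Fix $(t_0,q_0)\in D_1$ and choose a small rectangular neighborhood on which $q\leq q_0/2<0$ and $t\geq 0$. On this neighborhood the term $n(\log 2)2^{-nt}e^{2^{n}q}$ is bounded by $n(\log 2)e^{2^{n}q_0/2}$, whose root-test limit is $0$, so the series $\sum n(\log 2)2^{-nt}e^{2^{n}q}$ converges uniformly there. Thus $\partial S/\partial t$ equals the termwise derivative, and dividing by $S$ gives the stated expression for $\partial P/\partial t$.

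\textbf{Step 2 (item (1)).} Factor $2^{-t}e^{2q}$ out of the numerator and denominator, exactly as in the proof of Proposition 6.1(3):
\begin{equation*}
\frac{\partial P}{\partial t}(t,q)=-(\log 2)\cdot\frac{1+\sum_{n\geq 2}n\,2^{-(n-1)t}e^{(2^{n}-2)q}}{1+\sum_{n\geq 2}2^{-(n-1)t}e^{(2^{n}-2)q}}.
\end{equation*}
Since $2^{n}-2>0$ for $n\geq 2$, each term $e^{(2^{n}-2)q}\to 0$ as $q\to-\infty$; fixing any $q\leq-1$ gives a summable dominating sequence, so dominated convergence sends both residual series to $0$. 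The limit equals $-\log 2$.

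\textbf{Step 3 (item (2)).} At $t=1$ the formula becomes
\begin{equation*}
\frac{\partial P}{\partial t}(1,q)=-(\log 2)\,\frac{\sum_{n\geq 1}n\,2^{-n}e^{2^{n}q}}{\sum_{n\geq 1}2^{-n}e^{2^{n}q}}.
\end{equation*}
For $q<0$ each factor $e^{2^{n}q}$ increases monotonically to $1$ as $q\to 0^-$, and both series are majorized by their values at $q=0$, namely $\sum_{n\geq 1}n\,2^{-n}$ and $\sum_{n\geq 1}2^{-n}$, which are finite. Monotone convergence therefore transfers the pointwise limit $e^{2^{n}q}\to 1$ inside the sums, and the claimed value follows by plugging in.

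The only real obstacle is Step 1: the coefficient $n$ is combined with the double-exponential factor $e^{2^{n}q}$, so one must choose the neighborhood carefully to get uniform convergence. The strict negativity of $q$ on $D_1$ is what makes this possible, since it lets us push $q$ uniformly away from $0$ on a neighborhood of $(t_0,q_0)$ and then apply the Weierstrass M-test. Once the termwise differentiation is justified, Steps 2 and 3 are direct analogues of the corresponding limit computations in Proposition 6.1.
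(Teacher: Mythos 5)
Your proposal is correct and follows essentially the same route as the paper: justify termwise differentiation of $S(t,q)$ by uniform convergence (you invoke the Weierstrass M-test on a neighborhood bounded away from $q=0$, the paper cites Abel's test — either works), then for the two limits factor out the $n=1$ term and pass to the limit inside the remaining series. The only cosmetic difference is that you factor out $2^{-t}e^{2q}$ while the paper factors out only $e^{2q}$, and you invoke monotone convergence for item (2) where the paper argues directly; neither changes the substance.
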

\begin{proof}
  By Abel's test, it follows that the series $\sum_{n=1}^{\infty}n2^{-n t}e^{2^{n}q}$ converges uniformly with respect to $t$ on the set $D_1$. Hence, we get \eqref{equa.6.6}.

  \medskip
  Fix $t\geq 0$. Write
  $$\frac{\sum_{n=1}^{\infty}n2^{-n t}e^{2^{n}q}}{\sum_{n=1}^{\infty}2^{-n t}e^{2^{n}q}}=\frac{e^{2q}(2^{-t}+\sum_{n=2}^{\infty}n2^{-nt}e^{(2^n-2)q})}{e^{2q}(2^{-t}+\sum_{n=2}^{\infty}2^{-nt}e^{(2^n-2)q})}=\frac{2^{-t}+\sum_{n=2}^{\infty}n2^{-nt}e^{(2^n-2)q}}{2^{-t}+\sum_{n=2}^{\infty}2^{-nt}e^{(2^n-2)q}}.$$
  Notice that for any $n\geq 2$,
  $$\lim_{q\to-\infty}e^{(2^n-2)q}=0.$$
  From the above, it follows that$$\lim_{q\to-\infty}\frac{\sum_{n=1}^{\infty}n2^{-n t}e^{2^{n}q}}{\sum_{n=1}^{\infty}2^{-n t}e^{2^{n}q}}=1,$$
  which proves \eqref{equa.6.7}.
  \medskip

  By \eqref{equa.6.6}, we have $$\frac{\partial P}{\partial t}(1,q)=\frac{(-\log2)\sum_{n=1}^{\infty}n2^{-n}e^{2^{n}q}}{\sum_{n=1}^{\infty}2^{-n}e^{2^{n}q}}.$$
  Since $e^{2^{n}q}\to 1$ ($q\to 0$) for $n\in\N$ and $\sum_{n=1}^{\infty}2^{-n}=1$, we have
  \begin{equation}\label{equa.6.8}
    \lim_{q\to0}\frac{\partial P}{\partial t}(1,q)=(-\log2)\sum_{n=1}^{\infty}n2^{-n}=-\frac{\log2}{2}.
  \end{equation}
\end{proof}

\begin{proposition}\label{prop.4.12}
  Let $\tilde{D}=\{(t,q)\in D:0\leq t\leq 1\}$. For any $\xi\in(2,\infty)$, the system 
  \begin{equation}
    \begin{cases}
     P(t,q)=q\xi,\\
     \dfrac{\partial P}{\partial q}(t,q)=\xi
       \end{cases}
      \end{equation}
   admits a unique solution $\big(t(\xi),q(\xi)\big)\in\tilde{D}$. Further, the functions $t(\xi)$ and $q(\xi)$ are analytic. 
 \end{proposition}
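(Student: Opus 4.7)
The plan is to mirror the proof of Proposition~\ref{4.11}, using the strict convexity of $P(t,q)$ together with the boundary behaviours of $\partial P/\partial q$ recorded in Proposition~\ref{prop.6}. Fix $\xi>2$. For each $t\in[0,1]$ I would first solve $(\partial P/\partial q)(t,q)=\xi$ in $q$: the map $q\mapsto (\partial P/\partial q)(t,q)$ is strictly increasing by the strict convexity of $P$ in $q$ (Proposition~\ref{prop.1}(3)), while Proposition~\ref{prop.6} yields $(\partial P/\partial q)(t,q)>2$, $\lim_{q\to-\infty}(\partial P/\partial q)=2$, and $\lim_{q\to 0^-}(\partial P/\partial q)=\infty$. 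Hence there is a unique $q^{*}(t)<0$ with $(\partial P/\partial q)(t,q^{*}(t))=\xi$, and the analytic implicit function theorem (available by Theorem~\ref{core}) makes $t\mapsto q^{*}(t)$ real-analytic on $(0,1)$.

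Next I would set $h(t):=P(t,q^{*}(t))-\xi q^{*}(t)$, so that $(t,q^{*}(t))$ solves the system precisely when $h(t)=0$. The envelope identity gives $h'(t)=(\partial P/\partial t)(t,q^{*}(t))<0$ by Proposition~\ref{prop.1}(2), so $h$ is strictly decreasing on $[0,1]$; this already forces uniqueness in $\tilde D$. For existence I would establish the boundary inequalities $h(1)<0$ and $\lim_{t\to 0^{+}}h(t)>0$ and then apply the intermediate value theorem. The first follows from the strict convexity of $q\mapsto P(1,q)$ together with $P(1,0)=0$: the tangent line of slope $\xi$ to $P(1,\cdot)$ at $q^{*}(1)$ lies strictly below $P(1,\cdot)$ at $q=0$, so $P(1,q^{*}(1))<\xi q^{*}(1)$. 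The second uses the explicit formula $P(0,q)=\log\sum_{n\ge 1}e^{2^{n}q}=2q+\log\!\bigl(1+\sum_{n\ge 2}e^{(2^{n}-2)q}\bigr)>2q$ for every $q<0$; combined with $q^{*}(0)<0$ and $\xi>2$ this gives $\xi q^{*}(0)<2q^{*}(0)<P(0,q^{*}(0))$, i.e.\ $h(0)>0$.

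Finally, analyticity of $\xi\mapsto\bigl(t(\xi),q(\xi)\bigr)$ follows from the real-analytic implicit function theorem applied to $\Phi(t,q,\xi)=\bigl(P(t,q)-q\xi,\ (\partial P/\partial q)(t,q)-\xi\bigr)$: at a solution, the Jacobian in $(t,q)$ reduces to $\bigl((\partial P/\partial t),0;\,(\partial^{2}P/\partial t\partial q),(\partial^{2}P/\partial q^{2})\bigr)$ with determinant $(\partial P/\partial t)(\partial^{2}P/\partial q^{2})\ne 0$, the factors having opposite signs by Proposition~\ref{prop.1}. I expect the main obstacle to be the two boundary estimates for $h$, since these are the only steps where the specific form $\varphi(x)=2^{d_{1}(x)}$ and the threshold $\xi>2$ genuinely enter: the asymptotic $P(0,q)\sim 2q$ as $q\to-\infty$ is precisely what pins down the lower threshold $\xi=2$ in the statement, while the rest of the argument is abstract convex analysis driven by Propositions~\ref{prop.1} and~\ref{prop.6}.
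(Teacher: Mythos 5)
Your argument is correct, but there is nothing in the paper to compare it against: Proposition~\ref{prop.4.12} is stated without proof (the analogous Proposition~\ref{4.11} is simply cited from \cite{A.H.Fan}), so your write-up actually supplies a justification the paper omits. The structure you use --- solve the second equation for $q^{*}(t)$ using strict convexity in $q$ together with the range $(2,\infty)$ of $\partial P/\partial q$ from Proposition~\ref{prop.6}, then locate the unique zero of the strictly decreasing function $h(t)=P(t,q^{*}(t))-\xi q^{*}(t)$ --- is the standard mechanism behind the Fan--Liao--Wang--Wu proof, and your two boundary estimates are exactly right: $h(1)<0$ is the supporting-line inequality for the strictly convex $P(1,\cdot)$ at $q=0$ with $P(1,0)=0$, and $h(0)>0$ follows from $P(0,q)=2q+\log\bigl(1+\sum_{n\ge 2}e^{(2^{n}-2)q}\bigr)>2q$ together with $q^{*}(0)<0$ and $\xi>2$. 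The implicit-function-theorem computation for analyticity is also correct, and since $h(0)>0>h(1)$ forces $t(\xi)\in(0,1)$ and $q(\xi)<0$, the solution sits in the interior of the analyticity region of $P$. Two small points worth making explicit: to define $q^{*}(0)$ for every $\xi>2$ you need $\sup_{q<0}(\partial P/\partial q)(0,q)=\infty$, which the paper only records as the joint limit \eqref{equa.6.11} (it does follow, since the approach along $t=0$ is admissible, or by the direct estimate on $\sum 2^{n}e^{2^{n}q}/\sum e^{2^{n}q}$); and the envelope identity $h'(t)=(\partial P/\partial t)(t,q^{*}(t))$ presupposes differentiability of $q^{*}$, which you do obtain from the implicit function theorem since $\partial^{2}P/\partial q^{2}>0$.
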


\subsection{Proofs of Theorem \ref{thm.4} (1) and (2)}
It follows from the Birkhoff ergodic theorem that for Lebesgue almost all \x, 
$$\lim_{n\to\infty}\frac{1}{n}\sum_{j=1}^n2^{d_j(x)}=\int 2^{\lceil-\log_2 x\rceil} \,\mathrm{d}x=\infty.$$ Hence,
$$\dim_H E_{\varphi}(\infty)=\dim_H\{x\in(0,1]:\bar{\varphi}(x)=\infty\}=1.$$
 \begin{proposition} The function $t(\xi)$ is strictly incresing on $(2,\infty)$. 
  

\end{proposition}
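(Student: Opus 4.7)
The plan is to use implicit differentiation of the defining system exactly as in the derivation of equation \eqref{prop.3}. Differentiating the first equation $P(t(\xi), q(\xi)) = q(\xi)\xi$ with respect to $\xi$ gives
\[
\frac{\partial P}{\partial t}\bigl(t(\xi),q(\xi)\bigr) t'(\xi) + \frac{\partial P}{\partial q}\bigl(t(\xi),q(\xi)\bigr) q'(\xi) = q(\xi) + \xi\, q'(\xi),
\]
and substituting the second equation $\partial P/\partial q = \xi$ causes the $q'(\xi)$ terms to cancel, leaving
\[
t'(\xi) = \frac{q(\xi)}{(\partial P/\partial t)\bigl(t(\xi),q(\xi)\bigr)}.
\]
Since Proposition \ref{prop.1}(2) guarantees $\partial P/\partial t < 0$ on $D$, proving $t'(\xi) > 0$ reduces to showing $q(\xi) < 0$ for every $\xi > 2$.

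The domain $\tilde D$ already forces $q(\xi) \leq 0$, so only the boundary case $q(\xi) = 0$ needs to be excluded. I would argue by contradiction: if $q(\xi) = 0$ for some finite $\xi > 2$, then the first equation forces $P(t(\xi),0) = 0$, i.e.\ $\sum_{n=1}^\infty 2^{-n t(\xi)} = 1$, which uniquely gives $t(\xi) = 1$. But then the second equation would require $(\partial P/\partial q)(1,0) = \xi$, whereas formula \eqref{equa.6.3} evaluated at $(1,0)$ yields
\[
\frac{\partial P}{\partial q}(1,0) = \frac{\sum_{n=1}^\infty 2^{-n(1-1)}}{\sum_{n=1}^\infty 2^{-n}} = \frac{+\infty}{1} = +\infty,
\]
contradicting $\xi < \infty$. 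Hence $q(\xi) < 0$ strictly for every finite $\xi > 2$, and combined with $\partial P/\partial t < 0$ this yields $t'(\xi) > 0$ on $(2,\infty)$.

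The only real obstacle is this last step of ruling out $q(\xi)=0$ for finite $\xi$, which in effect says that the boundary point $(1,0)$ of $\tilde D$ corresponds to the asymptotic value $\xi = \infty$ rather than to any interior regime; the divergence of $\partial P/\partial q$ at $(1,0)$, which was already essentially computed via Proposition \ref{prop.6}, makes this transparent. Everything else is routine implicit differentiation together with an appeal to Proposition \ref{prop.1}(2).
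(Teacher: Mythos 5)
Your proposal is correct and follows essentially the same route as the paper: implicit differentiation gives $t'(\xi)=q(\xi)/(\partial P/\partial t)$, the sign of $\partial P/\partial t$ comes from Proposition 2.1(2), and the case $q(\xi)=0$ is excluded by the same contradiction (it would force $t(\xi)=1$ and $(\partial P/\partial q)(1,0)=\xi$, while that derivative blows up at $(1,0)$). The only cosmetic difference is that the paper invokes the limit statement $\lim_{(t,q)\to(1,0)}\partial P/\partial q=\infty$ rather than evaluating the series formula directly at the boundary point $(1,0)$, which is the slightly more careful phrasing since $(1,0)$ lies on the boundary of the domain of validity of that formula.
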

\begin{proof}
From the analytic area of $P(t,q)$, we know that $q(\xi)\leq 0$. Suppose there exists $\xi_1<\infty$ such that $q(\xi_1)=0$. Then $P(t(\xi_1),0)=0$ by Proposition \ref{prop.4.12}. 
Hence, we have $t(\xi_1)=1$ and $(\partial P/\partial q)(1,0)=\xi_1$. However, by \eqref{equa.6.4}, we have
\[\lim_{(t,q)\to(1,0)}\frac{\partial P}{\partial q}(t,q)=\infty.\]
This contradicts to the hypothesis that $\xi_1<\infty$. Thus $t^{\prime}(\xi)>0$.
\end{proof}

\subsection{Proofs of Theorem \ref{thm.4} (3) and (4)}
\begin{proposition}
  We have
\begin{equation}\label{equa.2}
  \lim_{\xi\to2}t(\xi)=0,\qquad \lim_{\xi\to2}q(\xi)=-\infty.
\end{equation}
Furthermore,
$$\lim_{\xi\to2}t'(\xi)=\infty.$$
\end{proposition}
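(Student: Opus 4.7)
I plan to dispatch the three assertions in order, using that for $q$ very negative the $n=1$ term of $P(t,q)=\log\sum_n 2^{-nt}e^{2^n q}$ dominates, so $P(t,q)\approx -t\log 2+2q$ and $\partial P/\partial q(t,q)\approx 2$; the two equations of the system \eqref{equa.pressure} then pin down the joint asymptotics of $(t(\xi),q(\xi))$ as $\xi\to 2^+$. \emph{Step 1: $q(\xi)\to-\infty$.} Suppose not; then along some sequence $\xi_n\to 2^+$ the values $q(\xi_n)$ stay bounded, and by compactness of $\tilde D$ there is a subsequential limit $(t^*,q^*)$ with $t^*\in[0,1]$ and $q^*\in(-\infty,0]$. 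The second equation forces $\partial P/\partial q(t(\xi_n),q(\xi_n))=\xi_n\to 2$; but Proposition \ref{prop.6}(3) shows $\partial P/\partial q>2$ whenever $q<0$, and (1)--(2) of the same proposition show $\partial P/\partial q\to\infty$ as $q\to 0^-$. Passing to the limit along the subsequence contradicts either alternative.

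\emph{Step 2: $t(\xi)\to 0$.} Isolating the $n=1$ term, I write $P(t,q)=-t\log 2+2q+\log(1+R(t,q))$ with $R(t,q)=\sum_{n\ge 2}2^{-(n-1)t}e^{(2^n-2)q}=O(e^{2q})$ uniformly for $t\in[0,1]$. The analogous splitting for the derivative yields $\partial P/\partial q(t,q)-2=2\cdot 2^{-t}e^{2q}(1+o(1))$ as $q\to-\infty$, uniformly in $t\in[0,1]$. Plugging this into $\partial P/\partial q(t,q)=\xi$ and solving gives $2q=\log(\xi-2)+(t-1)\log 2+o(1)$; in particular $q(\xi)\sim\tfrac12\log(\xi-2)$. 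Substituting into $P(t,q)=q\xi$ yields $q(\xi-2)=-t\log 2+O(\xi-2)$. Since $(\xi-2)\log(\xi-2)\to 0$, the left-hand side tends to $0$ as $\xi\to 2^+$, so $t(\xi)\to 0$.

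\emph{Step 3 and main obstacle.} Differentiating $P(t(\xi),q(\xi))=q(\xi)\xi$ in $\xi$ and using $\partial P/\partial q=\xi$ to cancel the $q'(\xi)$ terms gives, exactly as in \eqref{prop.3},
\[t'(\xi)=\frac{q(\xi)}{(\partial P/\partial t)(t(\xi),q(\xi))}.\]
Step 1 supplies $q(\xi)\to-\infty$, while \eqref{equa.6.7} gives $(\partial P/\partial t)(t(\xi),q(\xi))\to-\log 2$, so the quotient tends to $+\infty$. The delicate step is Step 2: the crude bounds $-t\log 2+2q\le P(t,q)\le 2q-\log(2^t-1)$ alone are compatible with $t$ bounded away from $0$, so one really must carry the expansions of both $P$ and $\partial P/\partial q$ to next order in $e^{2q}$ in order to couple the two equations and force $t\to 0$.
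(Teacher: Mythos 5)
Your proof is correct, but it reverses the paper's order and replaces the key step with a different argument. The paper first shows $\lim_{\xi\to2}t(\xi)=0$ by inverting the increasing map $\xi\mapsto t(\xi)$ and appealing to $\lim_{q\to-\infty}(\partial P/\partial q)(0,q)=2$ from \eqref{equa.6.5}, and only then derives $q(\xi)\to-\infty$ by the same subsequence/contradiction device you use in Step 1 (with the extra information $t(\xi_{n_k})\to0$ already in hand, so only the two cases $M=0$ and $M<0$ at $t^*=0$ must be excluded, via \eqref{equa.6.11} and Proposition \ref{prop.6}(3)). You instead establish $q(\xi)\to-\infty$ first, without knowing the limit of $t$ (which obliges you to handle arbitrary $t^*\in[0,1]$ in the case $q^*=0$, a joint limit slightly stronger than the fixed-$t$ statement of Proposition \ref{prop.6}(1), though it follows from the same series manipulations), and then obtain $t(\xi)\to0$ from a second-order expansion of $P$ and $\partial P/\partial q$ in powers of $e^{2q}$, coupling the two equations of the system. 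Your Step 2 is more computational but also more self-contained: the paper's inverse-function argument tacitly assumes that the range of $t(\xi)$ extends down to $0$, which is essentially what is being proved, whereas your relations $q(\xi)\sim\tfrac12\log(\xi-2)$ and $q(\xi)(\xi-2)=-t\log2+O(\xi-2)$ force $t(\xi)\to0$ outright and even give a rate. The final step, $t'(\xi)=q(\xi)/(\partial P/\partial t)(t(\xi),q(\xi))\to+\infty$ via \eqref{prop.3} and \eqref{equa.6.7}, is identical in both proofs; in both cases one should observe that the limit $\partial P/\partial t\to-\log2$ in \eqref{equa.6.7} holds uniformly for $t\in[0,1]$, so that it applies along the moving point $(t(\xi),q(\xi))$.
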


\begin{proof}To show the first formula in \eqref{equa.2}, we use the same method as in the proof of Proposition \ref{proposition.log}.
  Since $\xi\mapsto t(\xi)$ is increasing and analytic on $(2,\infty)$, we can write its inverse function 
  $\xi=\xi(t)$. Then $\xi^{\prime}(t)>0$ and 
  $$\xi(t)=\frac{P(t,q(t))}{q(t)}=\frac{\partial P}{\partial q}(t,q(t)).$$  
  We conclude from \eqref{equa.6.5} that $$\lim_{q\to-\infty}\frac{\partial P}{\partial q}(0,q)=2,$$  hence that $\lim_{t\to0}\xi(t)=2$, and finally that $\lim_{\xi\to2}t(\xi)=0.$
  \medskip

  Now suppose that there exists a subsequence $\{\xi_{n_k}\}_{k\geq 1}$, such that 
  $$\lim_{k\to\infty}\xi_{n_k}=2, \qquad \lim_{k\to\infty}q(\xi_{n_k})=M>-\infty.$$ Since $\lim_{\xi\to2}t(\xi)=0$, it follows that $\lim_{k\to\infty}t(\xi_{n_k})=0$. By Proposition \ref{prop.4.12}, 
  \begin{equation}\label{contradiction}
    \lim_{k\to\infty}\frac{\partial P}{\partial q}(t(\xi_{n_k}),q(\xi_{n_k}))=\lim_{k\to\infty}\xi_{n_k}=2.
  \end{equation}
  If $M=0$, then by \eqref{equa.6.11}, we have 
  $$\lim_{k\to\infty}\frac{\partial P}{\partial q}(t(\xi_{n_k}),q(\xi_{n_k}))=\lim_{(t,q)\to(0,0)}\frac{\partial P}{\partial q}(t,q)=\infty,$$ which contradicts to \eqref{contradiction}. If $M<0$, then by Proposition \ref{prop.6} (3), we have 
  $$\lim_{k\to\infty}\frac{\partial P}{\partial q}(t(\xi_{n_k}),q(\xi_{n_k}))=\frac{\partial P}{\partial q}(0,M)>2,$$
  which is also a contradiction to \eqref{contradiction}. Hence, $q(\xi)\to -\infty$ $(\xi\to2)$.

  \medskip

Applying the second formula in \eqref{equa.2} and combining \eqref{prop.3} and \eqref{equa.6.7}, we conclude that 
$$\lim_{\xi\to2}t^{\prime}(\xi)=\infty.$$
\end{proof}

\begin{proposition}
 We have
  \begin{equation}\label{equa.6.12}
    \lim_{\xi\to\infty}t(\xi)=1,\quad \lim_{\xi\to\infty}q(\xi)=0.
  \end{equation}
  Furthermore,
  \[\lim_{\xi\to\infty}t'(\xi)=0.\]
 \end{proposition}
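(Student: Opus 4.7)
The plan is to exploit the monotonicity of $t$ on $(2,\infty)$ to extract a limit $L:=\lim_{\xi\to\infty}t(\xi)\in(0,1]$, then rule out $L<1$ by a subsequence argument on $q(\xi)$ (which as a byproduct forces $q(\xi)\to 0$), and finally to combine the formula \eqref{prop.3} with the one-sided limit \eqref{equa.6.10} to conclude that $t'(\xi)\to 0$.

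The previous proposition shows $t(\xi)$ is strictly increasing on $(2,\infty)$, and $(t(\xi),q(\xi))\in\tilde D$ forces $t(\xi)\leq 1$, so the monotone limit $L$ exists in $(0,1]$. To show $L=1$, I argue by contradiction: assume $L<1$, take any sequence $\xi_n\to\infty$, and (using $q(\xi)\leq 0$) pass to a subsequence along which $q(\xi_n)\to q^*\in[-\infty,0]$. If $q^*\in(-\infty,0)$, then continuity of $\partial P/\partial q$ on the interior of $D$ (justified by the uniform convergence of the series \eqref{equa.6.3} on compact subsets) yields $(\partial P/\partial q)(t(\xi_n),q(\xi_n))\to(\partial P/\partial q)(L,q^*)<\infty$, contradicting $\xi_n\to\infty$. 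If $q^*=-\infty$, then \eqref{equa.6.5} forces the same quantity to tend to $2$, again a contradiction. If $q^*=0$, then
\[
P(t(\xi_n),q(\xi_n))\to P(L,0)=-\log(2^L-1)>0
\]
(using $L<1$), while $P(t(\xi_n),q(\xi_n))=q(\xi_n)\xi_n\leq 0$, contradiction. Hence $L=1$. Rerunning the trichotomy with $L=1$ (where the third case is now consistent because $P(1,0)=0$) shows that every subsequential limit of $q(\xi)$ must equal $0$, whence $q(\xi)\to 0$.

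For the derivative assertion, by \eqref{prop.3} we have $t'(\xi)=q(\xi)/(\partial P/\partial t)(t(\xi),q(\xi))$. Since $(t(\xi),q(\xi))\to(1,0)$ from within the region $q<0$, equation \eqref{equa.6.10} yields that $(\partial P/\partial t)(t(\xi),q(\xi))$ tends to a finite nonzero limit, while the numerator $q(\xi)\to 0$; therefore $t'(\xi)\to 0$.

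The main obstacle is the case $q^*=0$ in the contradiction step: neither $\xi_n\to\infty$ nor $q(\xi_n)\to 0$ alone produces a contradiction, and the argument depends on coupling the two equations of the pressure system \eqref{equa.pressure} so as to play the sign inequality $q\xi\leq 0$ against the strict positivity of $P(L,0)=-\log(2^L-1)$ that holds precisely when $L<1$. A secondary technical point is justifying passage to the limit inside $P$ and $\partial P/\partial q$ as $(t,q)\to(L,0)$ from within $D$, which follows from the uniform convergence of the defining series established in Proposition \ref{prop.6}.
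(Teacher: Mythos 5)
Your proof is correct and follows essentially the same route as the paper: rule out subsequential limits $q^*<0$ (and $q^*=-\infty$) via $\partial P/\partial q=\xi\to\infty$ together with \eqref{equa.6.3} and \eqref{equa.6.5}, pin down $t_0=1$ by playing $P=q\xi\leq 0$ against $P(t_0,0)=-\log(2^{t_0}-1)\geq 0$, and conclude $t'(\xi)\to 0$ from \eqref{prop.3} and \eqref{equa.6.10}. If anything, your version is slightly more careful than the paper's, which simply "supposes" that $(t(\xi),q(\xi))$ converges, whereas you extract the limit of $t$ from monotonicity and handle $q$ by an exhaustive subsequence trichotomy.
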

 \begin{proof}
  Recall that for any $\xi>2$,
  \begin{equation*}
    \begin{cases}
     P\big(t(\xi),q(\xi)\big)=q(\xi)\xi,\\
     \dfrac{\partial P}{\partial q}\big(t(\xi),q(\xi)\big)=\xi.
       \end{cases}
      \end{equation*}
   Suppose $\big(t(\xi),q(\xi)\big)\to(t_0,q_0)$ as $\xi\to\infty$. The analyticity of $t(\xi)$ and $q(\xi)$ implies that $0<t_0\leq 1$ and $q_0\leq 0$. 
    Suppose $q_0<0$. Then by \eqref{equa.6.3} and \eqref{equa.6.5}, we have $$\lim_{(t,q)\to(t_0,q_0)}\frac{\partial P}{\partial q}(t,q)<\infty,$$ which is a contradiction to $$\lim_{\xi\to\infty}\frac{\partial P}{\partial q}(t,q)=\lim_{\xi\to\infty}\xi=\infty.$$ Thus $q_0=0$. 
    For any $\xi>2$, since $q(\xi)\xi<0$ and $P(t_0,0)\geq 0$, we have $$P(t_0,0)=\lim_{\xi\to\infty}q(\xi)\xi=0.$$ Since $P(1,0)=0$ and $(\partial P/\partial q)(t,q)\to\infty$ as $(t,q)\to(1,0)$, we have $(t_0,q_0)=(1,0)$, which completes the proof of \eqref{equa.6.12}.

    \medskip
By \eqref{equa.6.10} and \eqref{prop.3}, we have $$\lim_{\xi\to\infty}t'(\xi)=0.$$ 
\end{proof}

\medskip
Applying similar arguments to the proof of Theorem \ref{them.similar}  yields  $$\dim_H E_{\varphi}(2)=0.$$

\end{document}